\newcommand{\bv}{\boldsymbol v}
\newcommand{\bz}{\boldsymbol z}
\newcommand{\bvar}{\boldsymbol \varphi}
\newcommand{\bU}{\boldsymbol U}
\newcommand{\bV}{\boldsymbol V}
\DeclareMathOperator*{\argmin}{argmin}
\newtheorem{Theorem}{Theorem}
\newtheorem{lema}{Lemma}
\newcounter{remark}
\def\theremark {\arabic{remark}}
\newenvironment{remark}{\refstepcounter{remark}\par\noindent{\bf Remark\ \theremark}\ }{\par}
\newtheorem{Proof}{Proof}
\newenvironment{proof}{\begin{Proof}\rm}{\hfill $\Box$ \end{Proof}}
\title{Optimal bounds for POD approximations of infinite horizon control problems based on time derivatives}
\author{ Javier de Frutos\thanks{Instituto de Investigaci\'on en Matem\'aticas (IMUVA), Universidad
de Valladolid, Spain. Research supported
by Spanish MCIN/AEI 
under grants PID2019-104141GB-I00  and  PID2022-136550NB-I00 co-finanzed by FEDER (EU) funds and by Junta de Castilla y Le\'{o}n  under grant VA169P20 co-finanzed by FEDER (EU) funds
(frutos@mac.uva.es)}
\and Bosco Garc\'{\i}a-Archilla\thanks{Departamento de Matem\'atica Aplicada
II, Universidad de Sevilla, Sevilla, Spain. Research is supported by
Spanish  MCIN/AEI under grants PID2019-104141GB-I00 and PID2022-136550NB-I00 
co-finanzed by FEDER (EU) funds (bosco@esi.us.es)}
  \and Julia Novo\thanks{Departamento de
Matem\'aticas, Universidad Aut\'onoma de Madrid, Spain.  Research supported
Spanish  MCIN/AEI under grants PID2019-104141GB-I00 and PID2022-136550NB-I00 co-finanzed by FEDER (EU) funds and by Junta de Castilla y Le\'{o}n  under grant VA169P20 co-finanzed by FEDER (EU) funds
 (julia.novo@uam.es)}}
\date{\today}
\begin{document}
\maketitle
%\abstract{
\begin{abstract}
In this paper we consider the numerical approximation of infinite horizon  problems via the dynamic programming approach.
The value function of the problem solves a Hamilton-Jacobi-Bellman (HJB) equation that is approximated by a fully discrete method. 
It is known that the numerical problem is difficult to handle by the so called curse of dimensionality. To mitigate this issue
we apply a reduction of the order by means of a new proper orthogonal decomposition (POD) method based on time derivatives. 
We carry out the error analysis of the method 
using recently proved optimal bounds for the
fully discrete approximations. Moreover, the use of snapshots based on time derivatives allows us to bound some terms of the
error that could not be bounded in a standard POD approach. Some numerical experiments show the good performance of the method
in practice.
\end{abstract}
% }
\bigskip

{\bf Key words.} Dynamic programming, Hamilton-Jacobi-Bellman equation, optimal control, proper orthogonal decomposition, snapshots based on time derivatives, error analysis.

\bigskip

{\bf Data Availability Statement.} The authors declare that the data supporting the findings of this study can be obtained by reproducing the computations as described in the section on numerical experiments.

\section{Introduction}
In this paper we consider the numerical approximation of optimal control problems. The subject is  of importance for many applications such as aerospace engineering, chemical processing and resource economics, among others.

The value function of an optimal control problem is obtained in terms of a first-order nonlinear Hamilton-Jacobi-Bellman (HJB) partial differential equation.
A bottleneck in the computation of the value function comes from the need to approach a nonlinear partial differential equation in dimension $n$, which is a challenging problem in high dimensions.

Several methods have been studied in the literature trying to mitigate the so called curse of dimensionality although it is still
a difficult task. As stated in \cite{eigel_et_al}, the relevance of efficient numerical methods can be seen by the fact that methods solving the HJB equation are rarely used in practice due to the necessary computational effort. We mention some related references that 
are not intended to be a complete list. In  \cite{falcone_dd}, 
 a domain decomposition technique is considered. In \cite{tonon}  semi-Lagrangian methods are studied.
The authors in \cite{Luo_et_al} apply data-based approximate policy iteration methods. 
A procedure for the numerical approximation of high-dimensional HJB equations associated to optimal feedback control problems for semilinear parabolic equations is proposed in \cite{Kalise_et_al}. In \cite{Dolgov_et_al} a tensor decomposition approach is presented. In \cite{eigel_et_al} an approach based on low-rank tensor train decompositions is applied.
\textcolor{red}{Methods using sparse grids for HJB equations are presented in \cite{boba2}. The solution of HJB equations
on a tree structure was presented in 
\cite{Alla_fal_sa}. The author of \cite{maceneay1}, \cite{maceneay2} discusses an approach to certain nonlinear HJB PDEs which is not subject to the curse of dimensionality. The approach utilizes the max-plus algebra.
In \cite{dante_etal} a data-driven approach based on the knowledge of the value function and its gradient on sample points is developed.
The authors of \cite{Alla_et_all} present a new approach where the value function is computed using radial basis functions. Expanded literature on the control of partial differential equations using dynamic programming approach can be
found in the last two references.}

In the present paper we concentrate on reduced order models based on proper orthogonal decomposition (POD) methods. Our work is related to \cite{Alla_Falcone_Volkwein}. In this reference the authors propose two different ways to apply POD methods in the numerical approximation of the fully-discrete value function. In the first approach, the authors choose a set of nodes in the original domain $\Omega\subset {\Bbb R}^n$ and project then onto a reduced space $\Omega^r \subset {\Bbb R}^r$ with $r<n$ to get a new set of nodes. The problem in this procedure is that it produces a nonuniform grid in which the mesh diameter cannot be predicted a priori. Consequently, the method
is not suitable to implement in practice. Furthermore, although this is not reflected in the error bounds in  \cite{Alla_Falcone_Volkwein}, the error
also depends on the interpolation properties of the a priori unknown reduced mesh in $\Omega^r \subset {\Bbb R}^r$. In the second approach, the authors use a uniform mesh over the reduced space $\Omega^r$. This second method can be implemented in practice (the numerical
experiments in \cite{Alla_Falcone_Volkwein} are carried out with this method). However, as the authors state, the computation of an upper
bound for the error in this case is much more involved and the error bound proved in \cite{Alla_Falcone_Volkwein} has some drawbacks see \cite[Remark 4.7, Remark 4.9]{Alla_Falcone_Volkwein}. 

Recently in \cite{Javier_yo}, a new error analysis is introduced in which 
a bound of size  $O(h+k)$ is obtained for the fully discrete approximations to infinite horizon problems via the dynamic programming approach. In this error bound, $h$ is the time step while $k$ is the spatial mesh diameter. This error bound improves existing
results in the literature, where only $O(k/h)$ error bounds are proved, see \cite{falcone1}, \cite{falcone11}. 

To bound the error in the first method in  \cite{Alla_Falcone_Volkwein} the authors follow the technique in  \cite[Corollary 2.4]{falcone1}, \cite[Theorem 1.3]{falcone11} obtaining
a bound for the error of size $O(k/h)$. For the second method in \cite{Alla_Falcone_Volkwein}, the factor $1/h$ also multiplies all  the terms on the right-hand side of the a priori error bound. 

In this paper we present a new approach, similar to the second method
in \cite{Alla_Falcone_Volkwein}, but with snapshots based on the value at different times of the time derivative of the state of the controlled nonlinear dynamical system,  instead of values of the state
at different times. This new approach is inspired in the recent results in \cite{Bosco_Volker_yo} where the authors prove that the use of snapshots based on time derivatives has the advantage of providing pointwise estimates for the error between a function and its projection onto the POD space. \textcolor{red}{The idea of using snapshots approaching the time derivatives is not new, although
most of the references in the literature employ first order difference quotients (DQs) (i.e. first order divided finite differences) instead of Galerkin time derivatives, as
in \cite{Bosco_Volker_yo} and the present paper. In \cite{Ku-Vol} the set of snapshots (at different times) is increased with DQs to carry out the error analysis for the  case in which projections respect to the $H_0^1$ norm are considered. In a more recent paper, \cite{Koc_et_al},
the authors show that the use of DQs has the added property of allowing to prove pointwise estimates in time. In a later paper, \cite{locke-singler}, the authors prove that one does not need to double the set of snapshots with values at different times plus DQs since only DQs plus a single initial value are enough to get pointwise estimates. This is a very interesting result because one can work with the same number of snapshots as in the standard case (the one with only values of the states at different times). In \cite{Bosco_Volker_yo}, the authors prove that this is also the case with time derivatives. A set of snapshots based on time
derivatives plus the snapshot at the initial time (or the mean value of the states) is able to provide pointwise in time
error estimates. This is the idea we apply in the present paper.}
Moreover, we carry out a different error analysis based on the recent results obtained in \cite{Javier_yo} that allow us to get sharper error bounds free of $1/h$ factors. This is in agreement with the numerical investigations in the literature where the $1/h$ behaviour in the error bounds of fully discrete methods has never been observed. Also, the use of snapshots based on time derivatives allows us to give a bound for some  terms %that bound the error 
that could not be bounded with the standard approach. Both facts, the new technique
used to bound the error that follows ideas in \cite{Javier_yo} together with the use of snapshots based on time derivatives, are the
key ingredients to get error bounds for the
new method that are optimal in terms of the time step $h$ and the mesh diameter of the reduced space $k_r$. As usual, our error bounds for 
the POD method depend also on the size of the tail of eigenvalues in the singular value decomposition.

The outline of the paper is as follows. In Section 2 we state the model problem and some preliminary results. In Section 3 we introduce the
POD approximation and carry out the error analysis of the method. Finally, in Section 4 we show some numerical experiments in which we implement the method we propose in the paper. In the experiments of Section 4 we choose the same numerical tests as in \cite{Alla_Falcone_Volkwein} to compare our results with those in this related
reference.
The method introduced in the present paper seems to produce better results than those shown in~\cite{Alla_Falcone_Volkwein}. We finish the paper with some conclusions.

%We show that the new method introduced in this paper outperforms considerably the numerical results in \cite{Alla_Falcone_Volkwein}. 

\section{Model problem and standard numerical approximation}
In the sequel, $\|\cdot\|$ denotes any norm  associated to an inner product and~$\|\cdot\|_\infty$  denotes the maximum norm for vectors in~${\mathbb R}^n$, $n\ge 1$. We will also denote by  $\|\cdot\|_2$ the standard euclidean norm. \textcolor{red}{In particular, in the numerical experiments, we use a weighted norm $\|\cdot\|$ slightly different from the standard euclidean norm $\|\cdot\|_2$}.

For a nonlinear mapping
$$
f:{\Bbb R}^n\times {\Bbb R}^m\rightarrow {\Bbb R}^n,
$$
and a given initial condition $y_0 \in {\Bbb R}^n$ let us consider the controlled nonlinear dynamical system
\begin{equation}\label{din_sis}
\dot y(t)=f(y(t),u(t))\in {\Bbb R}^n,\quad t>0,\quad y(0)=y_0\in {\Bbb R}^n,
\end{equation}
together with the infinite horizon cost functional
\begin{equation}\label{fun_cos}
J(y,u)=\int_0^\infty g(y(t),u(t))e^{-\lambda t}~dt.
\end{equation}
In \eqref{fun_cos} $\lambda>0$ is a given weighting parameter and
$$
g:{\Bbb R}^n\times {\Bbb R}^m\rightarrow {\Bbb R}.
$$
The set of admissible controls is
$$
{\Bbb U}_{\rm ad}=\left\{u\in {\Bbb U}\mid u(t)\in U_{\rm ad} \ {\rm for}\ {\rm almost}\ {\rm all}\ t\ge 0\right\},
$$
where ${\Bbb U}=L^2(0,\infty;{\Bbb R}^m)$ and $U_{\rm ad}\subset {\Bbb R}^m$ is a compact convex subset.

As in \cite[Assumption 2.1]{Alla_Falcone_Volkwein} we assume the following hypotheses:
\begin{itemize}
\item The right-hand side $f$ in \eqref{din_sis} is continuous and globally Lipschitz-continuous in both the first
and second arguments; i.e., there exists
a constant $L_f>0$ satisfying
\begin{eqnarray}\label{lip_f}
\|f(y,u)-f(\tilde y,u)\|&\le& L_f\|y-\tilde y\|,\quad \forall y,\tilde y\in{\Bbb R}^n, u\in U_{\rm ad},\\
\|f(y,u)-f(y,\tilde u)\|&\le& L_f \|u-\tilde u\|,\quad \forall u,\tilde u\in U_{\rm ad}, y\in {\Bbb R}^n.
\label{lip_f2}
\end{eqnarray}
\item The right-hand side $f$ in \eqref{din_sis}  satisfies that there exists a constant $M_f>0$ such that the following bound holds
\begin{equation}\label{infty_f}
\|f(y,u)\|_\infty\le M_f, \quad \forall y\in \overline \Omega\subset {\Bbb R}^n, u\in U_{\rm ad},
\end{equation}
where $\overline \Omega$ is a bounded polyhedron
 such that for sufficiently small $h>0$ {the following inward pointing condition on the dynamics} holds
\begin{equation}\label{invariance}
y+hf(y,u)\in \overline \Omega,\quad \forall y \in \overline \Omega, u\in U_{\rm ad}.
\end{equation}
\item The running cost $g$ is continuous and globally Lipschitz-continuous in both the first and second arguments; i.e., there exists a constant $L_g>0$
satisfying
\begin{eqnarray}\label{lip_g}
|g(y,u)-g(\tilde y,u)|&\le& L_g\|y-\tilde y\|,\quad \forall y,\tilde y\in{\Bbb R}^n, u\in U_{\rm ad},\\
|g(y,u)-g(y,\tilde u)|&\le& L_g \|u-\tilde u\|,\quad \forall u,\tilde u\in U_{\rm ad}, y \in {\Bbb R}^n.\label{lip_g2}
\end{eqnarray}
\item Moreover, there exists a constant $M_g>0$ such that
\begin{equation}\label{cota_g}
|g(y,u)|\le M_g,\quad \forall (y,u)\in \overline \Omega\times U_{\rm ad}.
\end{equation}
\end{itemize}
From the assumptions made on $f$ there exists a unique solution of \eqref{din_sis} $y=y(y_0,u)$ {defined on $[0,\infty)$} for every admissible control
$u\in {\Bbb U}_{\rm ad}$ and for every initial condition $y_0\in {\Bbb R}^n$, see \cite[Chapter 3]{Bardi}. We define the reduced cost functional as follows:
\begin{eqnarray}\label{eq:funcional}
\hat J(y_0,u)=J(y(y_0,u),u),\quad \forall u\in {\Bbb U}_{\rm ad},\quad y_0\in {\Bbb R}^n,
\end{eqnarray}
where $y(y_0,u)$ solves \eqref{din_sis}. Then, the optimal control can be formulated as follows: for given $y_0\in {\Bbb R}^n$
we consider
$$
\min_{u\in {\Bbb U}_{\rm ad}} \hat J(y_0,u).
$$
The value function of the problem is defined as $v:{\Bbb R}^n\rightarrow {\Bbb R}$ as follows:
\begin{equation}\label{eq_v}
v(y)=\inf\left\{\hat J(y,u)\mid u\in {\Bbb U}_{\rm ad}\right\},\quad y\in {\Bbb R}^n.
\end{equation}
This function gives the best value for every initial condition, given the set of admissible controls $U_{\rm ad}$. It is characterized as the viscosity solution of the HJB equation corresponding to the infinite horizon {optimal control problem:}
\begin{equation}\label{HJB}
\lambda v(y)+\sup_{u\in U_{\rm ad}}\left\{-f(y,u)\cdot \nabla v(y)-g(y,u)\right\}=0,\quad y\in {\Bbb R}^n.
\end{equation}
{The solution of (\ref{HJB})} is unique for sufficiently large $\lambda$, $\lambda>\max(L_g,L_f)$, \cite{Bardi}.

Let
us consider first a time discretization where $h$ is a strictly positive step size.
{We consider the following} semidiscrete scheme for \eqref{HJB}:
\begin{equation}\label{discrete_HJB}
v_h(y)=\min_{u\in U_{\rm ad}}\left\{(1-\lambda h)v_h(y+hf(y,u))+h g(y,u)\right\},\quad y\in {\Bbb R}^n.
\end{equation}
\textcolor{red}{As it is well-known equation \eqref{discrete_HJB} represents a numerical approximation related to the HJB equation
\eqref{HJB} (see Remark 7}). 
{The following  convergence result  for the semidiscrete approximation \cite[Theorem 2.3]{falcone1}  requires that
for $(y,\tilde y,u)\in {\Bbb R}^n\times {\Bbb R}^n\times U_{\rm ad}$
\begin{eqnarray}
 \|f(y+\tilde y,u)-2f(y,u)+f(y-\tilde y,u)\|&\le& C_f\|\tilde y\|^2,\label{semi_con_f}\\
 \|g(y+\tilde y,u)-2g(y,u)+g(y-\tilde y,u)\|&\le& C_g\|\tilde y\|^2.\label{semi_con_g}
 \end{eqnarray}
 }

\begin{Theorem}\label{th_semi} Let assumptions \eqref{lip_f}, \eqref{infty_f}, \eqref{invariance}, \eqref{lip_g}, \eqref{cota_g}, \eqref{semi_con_f} and \eqref{semi_con_g} hold and let $\lambda>\max(2L_g,L_f)$. Let $v$ and $v_h$ be the solutions
of \eqref{HJB} and \eqref{discrete_HJB}, respectively. Then, there exists a constant $C\ge 0$, that can be bounded explicitly, such that the
following bound holds
\begin{equation}\label{cota_semi}
\sup_{y\in {\Bbb R}^n}|v(y)-v_h(y)|\le Ch,\quad h\in [0,1/ \lambda).
\end{equation}
\end{Theorem}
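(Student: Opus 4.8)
The plan is to realise $v_h$ as the unique fixed point of a contraction, to reduce the two–sided bound \eqref{cota_semi} to two one–sided consistency estimates, and to win the sharp rate in the harder of the two from the semiconcavity of $v$ supplied by \eqref{semi_con_f}--\eqref{semi_con_g}. First I would introduce the scheme operator
$$
(T_h w)(y)=\min_{u\in U_{\rm ad}}\bigl\{(1-\lambda h)\,w\bigl(y+hf(y,u)\bigr)+h\,g(y,u)\bigr\},\qquad y\in{\mathbb R}^n,
$$
acting on bounded functions. For $h\in[0,1/\lambda)$ one has $0\le 1-\lambda h<1$, and since $|\min_u a(u)-\min_u b(u)|\le\sup_u|a(u)-b(u)|$, the map $T_h$ is a contraction on $(L^\infty({\mathbb R}^n),\|\cdot\|_\infty)$ with factor $1-\lambda h$; by Banach's theorem \eqref{discrete_HJB} has a unique bounded solution $v_h$, with $\|v_h\|_\infty\le M_g/\lambda$ by \eqref{cota_g} and the invariance \eqref{invariance}. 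Because $T_h$ is also monotone, \eqref{cota_semi} follows from two one–sided consistency inequalities: if $v\le T_hv+Ch^2$ pointwise, then subtracting $v_h=T_hv_h$ and taking suprema gives $\lambda h\,\|(v-v_h)_+\|_\infty\le Ch^2$, i.e.\ $v-v_h\le Ch/\lambda$; symmetrically $T_hv\le v+Ch^2$ yields $v_h-v\le Ch/\lambda$. Hence it suffices to prove $\|v-T_hv\|_\infty\le Ch^2$.

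The inequality $v\le T_hv+Ch^2$ is the easy half and uses only that $v$ is bounded and globally Lipschitz (which holds under \eqref{lip_f}, \eqref{lip_g} and $\lambda>L_f$). For a fixed $\bar u\in U_{\rm ad}$ I would insert the constant control $u(\cdot)\equiv\bar u$ into the exact dynamic programming principle
$$
v(y)=\inf_{u(\cdot)\in{\mathbb U}_{\rm ad}}\Bigl\{\int_0^h g\bigl(y(s),u(s)\bigr)e^{-\lambda s}\,ds+e^{-\lambda h}v\bigl(y(h)\bigr)\Bigr\},
$$
and estimate the three one–step errors: the Euler local error $\|y(h)-(y+hf(y,\bar u))\|\le Ch^2$ from \eqref{lip_f}--\eqref{infty_f}, the quadrature error of $\int_0^h g\,e^{-\lambda s}\,ds-hg(y,\bar u)$ from \eqref{lip_g}--\eqref{cota_g}, and $|e^{-\lambda h}-(1-\lambda h)|\le Ch^2$. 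Lipschitz continuity of $v$ converts the $O(h^2)$ trajectory error into an $O(h^2)$ error in $v(y(h))$, and taking the minimum over $\bar u$ gives $v(y)\le T_hv(y)+Ch^2$. Note that no second–order hypothesis is needed here.

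The reverse consistency $T_hv\le v+Ch^2$ is the main obstacle, and it is exactly here that \eqref{semi_con_f}, \eqref{semi_con_g} and the strengthened bound $\lambda>2L_g$ are used. The naive version of the previous argument fails because a (near–)optimal continuous control $u^*(\cdot)$ on $[0,h]$ is generally neither constant nor regular, so the time–averaged one–step dynamics $\tfrac1h\int_0^h f(y,u^*(s))\,ds$ lies only in the convex hull of $\{f(y,u):u\in U_{\rm ad}\}$ and cannot be matched, to order $h^2$, by $f(y,\bar u)$ for a single admissible $\bar u$; likewise for $g$. The way around this is to first prove that $v$ is \emph{semiconcave} with a modulus uniform in $y$: the second–order state differences controlled in \eqref{semi_con_f}--\eqref{semi_con_g} propagate to the value function, and the condition $\lambda>2L_g$ guarantees a finite semiconcavity constant. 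One then obtains $T_hv\le v+Ch^2$ by a comparison (doubling–of–variables) argument between $v$ and $v_h$, in which the semiconcavity modulus supplies the one–sided second–order estimate needed to close the penalisation term; this is precisely the ingredient that upgrades the generic $O(\sqrt h)$ viscosity–solution rate to the first–order rate. Combining the two one–sided bounds through the monotone–contraction step of the first paragraph gives $\|v-v_h\|_\infty\le Ch/\lambda$, which is \eqref{cota_semi}. The delicate point, and the step I expect to be the hardest, is making this last argument rigorous at the points where $v$ is not differentiable, using the superdifferential $D^+v$ together with the uniform semiconcavity modulus rather than a classical Taylor expansion.
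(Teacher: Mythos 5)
The paper does not actually prove this statement: Theorem~\ref{th_semi} is imported from \cite[Theorem~2.3]{falcone1} (see also \cite[Theorem~1.3]{falcone11}), so your proposal can only be measured against that classical argument. Your architecture matches it and the first two thirds are sound: $v_h$ is the fixed point of the monotone contraction $T_h$ with factor $1-\lambda h$, the two-sided bound reduces to the one-sided consistency inequalities $\pm\bigl(v-T_hv\bigr)\le Ch^2$, and the inequality $v\le T_hv+Ch^2$ does follow from the dynamic programming principle with a frozen control, the Euler local error, and the Lipschitz continuity of $v$ (available for $\lambda>L_f$), with no second-order hypotheses.

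The gap is in the half you yourself flag as hardest, and it is a genuine gap rather than a missing computation. First, you assert that $\lambda>2L_g$ yields a finite semiconcavity constant for $v$. The standard propagation estimate bounds $v(y+\tilde y)+v(y-\tilde y)-2v(y)$ by $\int_0^\infty e^{-\lambda t}$ times second differences of $g$ along trajectories, which are controlled by $C_g\|y_+(t)-y_-(t)\|^2+2L_g\bigl\|\tfrac12(y_+(t)+y_-(t))-y_0(t)\bigr\|$; both terms grow like $e^{2L_ft}\|\tilde y\|^2$ by \eqref{semi_con_f} and Gronwall, so the integral converges when $\lambda>2L_f$, not when $\lambda>2L_g$. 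You must either explain how the stated hypothesis $\lambda>\max(2L_g,L_f)$ suffices or accept that your route needs a condition the theorem does not obviously supply. Second, even granting semiconcavity, $T_hv\le v+Ch^2$ is not established: doubling of variables with a penalisation term is the Crandall--Lions mechanism that gives the unconditional $O(\sqrt h)$ rate, and it does not convert a semiconcavity modulus into an $O(h^2)$ consistency error. The classical first-order argument is different and more direct: choose $p$ a reachable gradient of $v$ at $y$ (a limit of $Dv(y_k)$ at points of differentiability, which exists and belongs to $D^+v(y)$ because $v$ is semiconcave); at such $p$ the HJB equation holds with \emph{equality}, so $\inf_{u}\{p\cdot f(y,u)+g(y,u)\}=\lambda v(y)$, and semiconcavity supplies the one-sided expansion $v(y+hf(y,u))\le v(y)+h\,p\cdot f(y,u)+Ch^2$ uniformly in $u$; combining the two gives the missing inequality. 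The supersolution property is only guaranteed on $D^-v(y)$, which may be empty where $v$ is not differentiable, so the selection of a reachable gradient (or a mollification of $v$ exploiting semiconcavity) is exactly the ingredient your sketch omits; as written, the argument would not close.
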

As in \cite{Alla_Falcone_Volkwein} let us suppose that there exists a bounded polyhedron $\Omega\subset {\Bbb R}^n$ such that
for $h>0$ small enough \eqref{invariance} holds.
We consider a fully-discrete approximation to \eqref{HJB}.
 Let $\left\{S_j\right\}_{j=1}^{m_s}$ be a family of simplices which defines a regular triangulation of $\Omega$
$$
\overline \Omega=\bigcup_{j=1}^{m_s} S_j,\quad k=\max_{1\le j\le m_s}({\rm diam} \ S_j).
$$
We assume we have $n_s$ vertices/nodes $\color{red}\hat y^1,\ldots,\hat y^{n_s}$ in the triangulation. Let $V^k$ be the space of piecewise affine functions from $\overline \Omega$ to ${\Bbb R}$ which are continuous in $\overline \Omega$ having constant gradients in the interior of any simplex $S_j$ of the triangulation. Then, a fully discrete scheme for the HJB equations is given by
\begin{equation}\label{fully_discrete}
v_{h,k}(\hat y^i)=\min_{u\in {U}_{\rm ad}}\left\{(1-\lambda h)v_{h,k}(\hat y^i+hf(\hat y^i,u))+hg(\hat y^i,u)\right\},
\end{equation}
for any vertex $\hat y^i\in \overline \Omega$. There exists a unique
solution of \eqref{fully_discrete} in the space $V^k$, see \cite[Theorem 1.1, Appendix A]{Bardi}.

For the fully discrete method if we assume that the controls are Lipschitz-continuous; i.e., there exists a positive constant $L_u>0$ such that
\begin{eqnarray}\label{lip_u}
\|u(t)-u(s)\|_2\le L_u |t-s|,
\end{eqnarray}
then first order of convergence both in time and space is proved in \cite[Theorem 6]{Javier_yo}.
\begin{Theorem}\label{th_6}
Assume conditions  \eqref{lip_f}, \eqref{lip_f2}, \eqref{infty_f}, \eqref{lip_g}, \eqref{lip_g2}, \eqref{cota_g} and \eqref{lip_u} hold.
Assume $\lambda>\overline L$ with $\overline L=CnL_f$. Then, for $0\le h\le 1/(2\lambda)$ there exist positive constants ${C_1=
C_1(\lambda,M_f,M_g,L_f,L_g)}$ and  ${C_2=
C_2(\lambda,L_f,L_g,L_u)}$ such that
$$
|v(y)-v_{h,k}(y)|\le C_1(h+k)+{C_2h},\quad y\in \overline \Omega.
$$
\end{Theorem}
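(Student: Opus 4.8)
The plan is to estimate $|v(y)-v_{h,k}(y)|$ by inserting the semidiscrete value function $v_h$ of \eqref{discrete_HJB} as an intermediate object and bounding the two pieces $|v-v_h|$ and $|v_h-v_{h,k}|$ separately. Since the semiconvexity hypotheses \eqref{semi_con_f}--\eqref{semi_con_g} required by Theorem~\ref{th_semi} are not among the assumptions here, I would re-derive the time-discretization bound $|v-v_h|\le C_2 h$ by a direct comparison of trajectories that instead exploits the Lipschitz regularity \eqref{lip_u} of the optimal controls. The two resulting contributions are designed to match the two terms in the statement: the spatial/interpolation error will sit inside $C_1(h+k)$, while the control-regularity error will produce the separate summand $C_2h$ with $C_2$ depending on $L_u$.

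For $|v-v_h|\le C_2 h$ I would argue by two inequalities. In one direction, take a near-optimal control $u^\ast$ for $v(y)$, sample it at the nodes $t_n=nh$, and run the explicit Euler trajectory $y_{n+1}=y_n+hf(y_n,u^\ast(t_n))$; comparing the discounted Riemann sum $\sum_n h(1-\lambda h)^n g(y_n,u^\ast(t_n))$ with $\int_0^\infty g(y(t),u^\ast(t))e^{-\lambda t}\,dt$, the Euler error of the state is $O(h)$ by a discrete Gronwall argument based on \eqref{lip_f}, the replacement of $e^{-\lambda t}$ by $(1-\lambda h)^n$ is $O(h)$, and the replacement of $u^\ast(t)$ by $u^\ast(t_n)$ on each step costs $L_gL_uh$ by \eqref{lip_g2} and \eqref{lip_u}; multiplying this last contribution by the extra step weight $h$ and summing against the geometric factors $(1-\lambda h)^n$ (whose total is $(\lambda h)^{-1}$) turns it into the $O(h)$ term $C_2h$ with $C_2\sim L_gL_u/\lambda$. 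The reverse inequality follows symmetrically by taking the discrete dynamic-programming optimum and extending it to a piecewise-constant admissible control. The largeness of $\lambda$ is what makes all these discounted sums converge and keeps the Gronwall constants bounded.

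The main obstacle is the spatial estimate $|v_h-v_{h,k}|\le C_1(h+k)$. Both functions are fixed points of the same map, except that $v_{h,k}$ replaces nodal values at the off-grid points $y^i+hf(y^i,u)$ by their piecewise-affine interpolant. Inserting $v_h$ into \eqref{fully_discrete} and using $|\min-\min|\le\sup|\cdot|$ together with the $(1-\lambda h)$-contractivity of the scheme bounds the residual by the interpolation error $\sup|v_h-I^k v_h|$; as $v_h$ is only Lipschitz, this is $O(k)$, and the naive fixed-point argument then divides it by the contraction defect $\lambda h$, giving the classical suboptimal bound $O(k/h)$. Removing this spurious $1/h$ is exactly the point resolved in \cite{Javier_yo}: rather than bounding the residual pointwise and invoking $\sum_n(1-\lambda h)^n=(\lambda h)^{-1}$, one tracks the accumulated interpolation errors \emph{along} the controlled trajectory and uses that its total length $\sum_n\|y_{n+1}-y_n\|$ is $O(1)$, since $\|f\|_\infty\le M_f$ by \eqref{infty_f} and the discount effectively truncates the horizon at $O(1/\lambda)$, so the per-step errors of size $k$ sum to $O(k)$ rather than $O(k/h)$. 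The largeness condition $\lambda>\overline L=CnL_f$, in which the dimension $n$ enters through the stability constant of simplicial interpolation in $\mathbb{R}^n$, is precisely what guarantees contractivity and convergence of this trajectory-wise summation. I expect this refined stability estimate to be the crux; once it is in place, the remaining steps (Euler consistency, discount consistency, and the elementary first-order interpolation bound) are routine.
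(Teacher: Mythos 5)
First, a point of order: the paper does not actually prove this statement --- Theorem~\ref{th_6} is imported verbatim from \cite[Theorem 6]{Javier_yo} (``first order of convergence both in time and space is proved in [de Frutos--Novo, Theorem~6]''), so there is no internal proof to compare yours against. Judged on its own terms, your architecture is sensible: splitting through the semidiscrete $v_h$, rederiving $|v-v_h|\le C_2h$ from the Lipschitz continuity \eqref{lip_u} of the optimal control instead of the semiconcavity hypotheses \eqref{semi_con_f}--\eqref{semi_con_g} (which are indeed not assumed here), and attributing the $L_u$-dependence of $C_2$ to that step are all consistent with the statement. That half of the argument is routine Euler/discounted-sum bookkeeping and I believe it goes through for $\lambda>L_f$.

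The genuine gap is in the spatial step, which you yourself flag as the crux. Your proposed mechanism for removing the $1/h$ --- ``the total trajectory length $\sum_n\|y_{n+1}-y_n\|$ is $O(1)$, so the per-step errors of size $k$ sum to $O(k)$'' --- is a non sequitur. The per-step defect when $v_h$ is inserted into \eqref{fully_discrete} is the interpolation error of a merely Lipschitz function on a simplex of diameter $k$; it is of size $O(k)$ \emph{independently} of how short the step $\|y_{n+1}-y_n\|=h\|f\|=O(h)$ is. Summing $O(k)$ per step against the discount weights, whose total mass is $\sum_n h(1-\lambda h)^n\cdot h^{-1}=(\lambda h)^{-1}$, still returns $O(k/h)$; boundedness of the trajectory length would only help if the per-step error were proportional to the step length, i.e.\ of size $O(kh)$, and nothing in your argument produces that extra factor of $h$ (for that one classically needs semiconcavity of $v_h$, giving $O(k^2)$ interpolation errors and hence $O(k^2/h)$). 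So the key estimate $|v_h-v_{h,k}|\le C_1(h+k)$ is asserted rather than proved, and the mechanism you offer for it would, as written, collapse back to the classical suboptimal bound. To repair this you would have to reproduce the actual device of \cite{Javier_yo}, which compares value functions through the controlled trajectories themselves (using \eqref{lip_u}, \eqref{infty_f} and the discrete dynamic programming representation of $v_{h,k}$) rather than through the fixed-point residual of the scheme; the trajectory-length heuristic is not a substitute for it.
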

Condition \eqref{lip_u} can be weakened and one can still get convergence as proved in \cite[Theorem 7]{Javier_yo}.
Assume \textcolor{red}{the following convexity assumption introduced in \cite[(A4)]{boba_etal} and denoted by (CA) as in \cite{boba_etal}, \cite{Javier_yo}}
\begin{itemize}
\item (CA) For every $y\in {\Bbb R}^n$,
\begin{eqnarray*}
\left\{f(y,u), g(y,u),\quad  u\in U_{\rm ad}\right\}
\end{eqnarray*}
is a convex subset of ${\Bbb R}^{n+1}$.
\end{itemize}
\begin{Theorem}\label{th_4_cons}
Assume conditions  \eqref{lip_f}, \eqref{lip_f2}, \eqref{infty_f}, \eqref{lip_g}, \eqref{lip_g2}, \eqref{cota_g} and {\rm(CA)}  hold.
Assume $\lambda>\overline L$ with $\overline L$ defined as in Theorem \ref{th_6}. Then, for $0\le h\le 1/(2\lambda)$ there exist positive constants $C_1=
C_1(\lambda,M_f,M_g,L_f,L_g)$ and  $C_2=
C_2(\lambda,M_f,M_g,L_f,L_g)$ such that for $y\in \overline \Omega$
\begin{equation}\label{cota_buena_cons}
|v(y)-v_{h,k}(y)|\le C_1(h+k)+C_2 \frac{1}{(1+\beta)^2\lambda^2}(\log(h))^2h^{\frac{1}{1+\beta}},\quad \beta=\frac{\sqrt{n}L_f}{\lambda}.
\end{equation}
\end{Theorem}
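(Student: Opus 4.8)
The plan is to follow the trajectory-based comparison that underlies Theorem~\ref{th_6}, changing only the step in which an optimal control of the continuous problem is transferred to the discrete dynamic programming scheme \eqref{fully_discrete}. Since the Lipschitz-in-time regularity \eqref{lip_u} is no longer available, a constant-in-time control on each subinterval cannot be obtained by sampling $u^*$; instead the convexity hypothesis (CA) will be used to match the \emph{time average} of the velocity and the running cost on each step. I would therefore write the error as a principal part of size $O(h+k)$, bounded exactly as in \cite{Javier_yo} and common to Theorem~\ref{th_6}, plus a control-transfer part whose analysis under (CA) produces the term $\tfrac{1}{(1+\beta)^2\lambda^2}(\log h)^2 h^{1/(1+\beta)}$.

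Fix $y\in\overline\Omega$ and a near-optimal admissible control $u^*$ for $\hat J(y,\cdot)$, with continuous trajectory $y(\cdot)$, and truncate the horizon at $T=Nh$, with $N$ to be chosen. On the $j$-th step the set
$$
\mathcal F(y(t_j))=\{(f(y(t_j),u),g(y(t_j),u)):u\in U_{\rm ad}\}
$$
is compact, because $U_{\rm ad}$ is compact and $f,g$ are continuous, and convex by (CA); hence the average $\tfrac1h\int_{t_j}^{t_{j+1}}\bigl(f(y(t_j),u^*(s)),g(y(t_j),u^*(s))\bigr)\,ds$ lies in $\mathcal F(y(t_j))$ and is realized by some $u_j\in U_{\rm ad}$. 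Replacing the frozen state $y(t_j)$ by $y(s)$ costs only $O(h^2)$ per step, since $\|y(s)-y(t_j)\|\le M_f|s-t_j|$ together with \eqref{lip_f} and \eqref{lip_g}. This average matching is exactly the role of (CA), and it is the one place where the argument departs from the proof of Theorem~\ref{th_6}.

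I would then propagate these per-step discrepancies. A discrete Gronwall estimate based on \eqref{lip_f} yields a growth factor $(1+hL_f)^{j}$ in the Euclidean norm, which in the $\|\cdot\|_\infty$-norm relevant to \eqref{fully_discrete} and \eqref{infty_f} carries an extra $\sqrt n$, i.e. an effective rate $e^{\sqrt n L_f t}$. Since (CA) only furnishes an \emph{average} match, the sharper discounted summation available under \eqref{lip_u}, which beats the growth and gives a clean $O(h)$, is no longer at hand; bounding the accumulated cost error over $[0,T]$ crudely (discount estimated by one, each of the $N$ steps contributing $O(h^2)$ propagated by $e^{\sqrt n L_f t}$) leads to a finite-horizon error of the form $C\,T^2 h\,e^{\sqrt n L_f T}$, while the discarded tail is controlled by $C\,e^{-\lambda T}$ through the discount $(1-\lambda h)^j\le e^{-\lambda t_j}$. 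Running the same comparison with the roles reversed (which for the discrete-to-continuous direction is easier, as a piecewise-constant control is already admissible) closes the two-sided estimate, and the piecewise-affine interpolation in \eqref{fully_discrete} is absorbed into the principal $O(h+k)$ term exactly as in \cite{Javier_yo}, without the spurious $1/h$ factor.

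The decisive step is the choice of $T$. Balancing $e^{-\lambda T}$ against $T^2 h\,e^{\sqrt n L_f T}$ gives $T=|\log h|/(\lambda+\sqrt n L_f)$, whence $e^{-\lambda T}=h^{\lambda/(\lambda+\sqrt n L_f)}=h^{1/(1+\beta)}$ with $\beta=\sqrt n L_f/\lambda$, and the polynomial prefactor becomes $T^2=(\log h)^2/\bigl((1+\beta)^2\lambda^2\bigr)$, which is precisely the stated term. The main obstacle I anticipate is twofold: first, making the selection argument quantitative and uniform in $j$, so that the average is genuinely attained in $\mathcal F(y(t_j))$ and the freezing error is uniformly $O(h^2)$; and second, carrying out the crude accumulation with the correct constant $\overline L=CnL_f$, which is what guarantees well-posedness and contraction of \eqref{fully_discrete} in the $\|\cdot\|_\infty$-norm and legitimizes the balancing for every $h\le 1/(2\lambda)$. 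Once these are secured, the optimization over $T$ is elementary and yields the $(\log h)^2\,h^{1/(1+\beta)}$ behaviour.
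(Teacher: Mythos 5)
This theorem is not proved in the paper: it is imported verbatim from \cite[Theorem 7]{Javier_yo}, so there is no in-paper proof to compare your proposal against. Judged on its own merits, your sketch is a credible reconstruction of the argument behind the quoted result. The two ingredients you isolate are the right ones: (i) replacing the sampling of a Lipschitz optimal control (the role of \eqref{lip_u} in Theorem \ref{th_6}) by a Filippov-type averaging step, where compactness of $U_{\rm ad}$, continuity of $f,g$ and the convexity hypothesis (CA) guarantee that the time average of $(f(y(t_j),u^*(s)),g(y(t_j),u^*(s)))$ over a step is attained at some constant $u_j\in U_{\rm ad}$, with an $O(h^2)$ freezing error via $\|y(s)-y(t_j)\|\le M_f|s-t_j|$; and (ii) a horizon truncation balancing $e^{-\lambda T}$ against $T^2h\,e^{\sqrt{n}L_fT}$. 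Your choice $T=|\log h|/(\lambda+\sqrt{n}L_f)$ reproduces \emph{exactly} the constants in \eqref{cota_buena_cons}: $e^{-\lambda T}=h^{1/(1+\beta)}$ and $T^2=(\log h)^2/((1+\beta)^2\lambda^2)$, which is strong evidence the reconstruction is on target. The one step that is genuinely loose is your derivation of the growth rate $e^{\sqrt{n}L_ft}$: a norm-equivalence factor $\sqrt{n}$ multiplying a Gronwall bound does not change the exponential rate from $L_f$ to $\sqrt{n}L_f$; for the rate itself to become $\sqrt{n}L_f$ the factor must enter the per-step Lipschitz estimate (i.e.\ one must iterate $\|\cdot\|_\infty\le\|\cdot\|\le\sqrt{n}\,\|\cdot\|_\infty$ inside each step of the recursion, not once at the end). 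You flag this yourself as the place where $\overline L=CnL_f$ must be tracked, and it is indeed where the details live, but it does not alter the structure or the final exponent.
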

Let us observe that since $\beta$ is smaller than 1, by weakening the regularity requirements we loose at most half an order in the rate of convergence in time
of the method up to a logarithmic term.
\section{POD approximation of the optimal control problem based on time derivatives}
In this section we present a new approach, similar to the second method
in \cite{Alla_Falcone_Volkwein}, but with snapshots based on time derivatives at different times. We also perform a completely different error analysis to the one appearing in \cite{Alla_Falcone_Volkwein}, inspired in the results in \cite{Javier_yo} and \cite{Bosco_Volker_yo}.
\subsection{POD approximation based on time derivatives}\label{Se:3.1}
For  $p\in \Bbb N$ let us choose different pairs $\left\{(u^\nu,y_0^\nu)\right\}_{\nu=1}^p$ in ${\Bbb U}\times \overline \Omega$.
\textcolor{red}{Since ${\Bbb U}=L^2(0,\infty;{\Bbb R}^m)$ 
the controls do not need to be constants, as those taken in the numerical 
experiments}.
By $y^\nu=y(u^\nu;y_0^\nu)$, $\nu=1,\ldots,p$, we denote the solutions of \eqref{din_sis} corresponding to those chosen initial conditions
and controls.

Let us fix $T>0$ and $M>0$ and take $\Delta t=T/M$ and $t_j=j\Delta t$, $j=0,\ldots, M$. For $N=M+1$ we define the following space
\[
{\color{red}{\cal \bV}}=\mbox{span}\left\{z_1^\nu,z_2^\nu,\ldots,z_N^\nu\right\}_{\nu=1}^p,
\]
with
\begin{eqnarray*}
z_1^\nu&=&\sqrt{N}\overline y^\nu,\quad \overline y^\nu=\frac{1}{N}\sum_{j=0}^My^\nu(t_j)\\
z_j^\nu&=&\tau y_{t}^\nu(t_{j-1}),\ j=2,\ldots,N,
\end{eqnarray*}
so that
\[
{\cal \bV}= 
\mbox{span}\left\{\sqrt{N}\overline y^\nu,\tau y_{t}^\nu(t_{1}),\ldots,\tau y_{t}^\nu(t_{N})\right\}_{\nu=1}^p,
\]
where the factor $\tau$ in front of the temporal derivatives is a time scale and it makes the 
snapshots dimensionally correct. \textcolor{red}{In the numerical experiments we take $\tau=1$}.
The correlation  matrix corresponding to the snapshots is given by ${K}=((k_{i,j}))\in {\mathbb R}^{pN\times pN}$,
with the entries 
\[
k_{i,j}=\frac{1}{pN}\left(z_k^{i},z_l^{j}\right), \quad k,l=1,\ldots,N, \quad i,j=1,\ldots, p,
\]
and where here, and in the sequel, $(\cdot,\cdot)$ denotes the inner product in ${\Bbb R}^n$ to which the norm  $||\cdot||$
is associated.
%that can be any Hilbert space such that $z_k^{i}\in X$, $k=1,\ldots,N$, $i=1,\ldots,p$.
Let us denote for simplicity
\[
{\cal \bV}=\mbox{span}\left\{w_1,w_2,\ldots,w_{pN}\right\}:=\left\{z_1^1,\ldots z_N^1,\ldots,z_1^p,\ldots,z_N^p\right\}.
\]
Following \cite{Ku-Vol}, we denote by
$ \lambda_1\ge  \lambda_2,\ldots\ge \lambda_{d}>0$ the positive eigenvalues of {$K$} and by
$\bv_1,\ldots,\bv_{d}\in {\mathbb R}^{pN}$ its associated eigenvectors of euclidean norm $1$.  
Then, the (orthonormal) POD basis functions of $\cal \bV$ are given by
\begin{equation}\label{lachi}
\varphi_k=\frac{1}{\sqrt{pN}}\frac{1}{\sqrt{\lambda_k}}\sum_{j=1}^{pN} v_k^j w_j,\ k=1,\ldots,d,
\end{equation}
where $v_k^j$ is the $j$-th component of the eigenvector $\bv_k$.
The following error estimate is known from \cite[Proposition~1]{Ku-Vol}
\begin{eqnarray}\label{cota_ku}
\frac{1}{pN}\sum_{j=1}^{pN}\left\|w_j-\sum_{k=1}^r(w_j,\varphi_k)\varphi_k\right\|^2=\sum_{k=r+1}^{d}\lambda_k,
\end{eqnarray}
from which one can deduce for $\nu=1,\ldots,p$
\begin{equation}
\left\|\overline y^\nu-\sum_{k=1}^r(\overline y^\nu,\varphi_k)\varphi_k\right\|^2+\frac{\tau^2}{M+1}\sum_{j=1}^M\left\|y_{t}^{\nu}(t_j)-\sum_{k=1}^r(y_{t}^{\nu}(t_j),\varphi_k)\varphi_k\right\|^2\le p\sum_{k=r+1}^{d}\lambda_k.\label{eq:cota_pod_deriv}
\end{equation}
In the sequel, we will denote by
\begin{eqnarray}\label{Ur}
{\color{red}{\cal \bV}^r}= \mbox{span}\{\varphi_1,\varphi_2,\ldots,\varphi_r\},\quad 1\le r\le d,
\end{eqnarray}
and by $P^r\ : \  {\Bbb R}^n  \to {\cal \bV}^r$,  the orthogonal projection onto ${\cal \bV}^r$. 
Then \eqref{cota_ku} can be written as
\[
\frac{1}{pN}\sum_{j=1}^{pN}\left\|w_j-P^r w_j\right\|^2=\sum_{k=r+1}^{d}\lambda_k.
\]
The following lemma is proved in \cite[Lemma 3.2]{Bosco_Volker_yo}.
\begin{lema}\label{le:our_etal_mean}
Let $T>0$, $\Delta t=T/M$, $t^n=n\Delta t$, $n=0,1,\ldots M$, let $X$ be a {Banach} space, $\bz\in H^2(0,T;X)$. Then, the following estimate holds
\begin{equation}\label{eq:zetast_mean}
\max_{0\le k\le N }\|\bz^k\|_X^2 \le  {3}\|\overline\bz\|_X^2+\frac{12 T^2}{M}\sum_{n=1}^M \| \bz_t^n\|_X^2+\frac{16T}{3}(\Delta t)^2\int_0^T\|\bz_{tt}(s)\|_X^2\ ds,
\end{equation}
where
$
\overline \bz=\frac{1}{M+1}\sum_{j=0}^M\bz^j$.
\end{lema}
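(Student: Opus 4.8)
The plan is to write each nodal value as the mean plus a fluctuation, $\bz^k=\overline{\bz}+(\bz^k-\overline{\bz})$, to control the fluctuation by the first time derivative through the fundamental theorem of calculus, and then to convert the resulting continuous integral of $\|\bz_t\|_X^2$ into the discrete sum $\sum_n\|\bz_t^n\|_X^2$ by a quadrature (consistency) estimate whose remainder is governed by $\bz_{tt}$. Since $\bz\in H^2(0,T;X)$ embeds into $C^1([0,T];X)$ in the one-dimensional time variable, all pointwise values $\bz^k$ and $\bz_t^n$ are well defined and the FTC applies.

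First I would exploit that $\overline{\bz}$ is the arithmetic mean of the $\bz^j$ to write
\begin{equation*}
\bz^k-\overline{\bz}=\frac{1}{M+1}\sum_{j=0}^M(\bz^k-\bz^j)=\frac{1}{M+1}\sum_{j=0}^M\int_{t^j}^{t^k}\bz_t(s)\,ds .
\end{equation*}
Each integration interval lies in $[0,T]$, so taking norms, using the triangle inequality over $j$, and then Cauchy--Schwarz gives
\begin{equation*}
\|\bz^k-\overline{\bz}\|_X\le\int_0^T\|\bz_t(s)\|_X\,ds\le\sqrt{T}\left(\int_0^T\|\bz_t(s)\|_X^2\,ds\right)^{1/2},
\end{equation*}
hence $\|\bz^k-\overline{\bz}\|_X^2\le T\int_0^T\|\bz_t(s)\|_X^2\,ds$, a bound that is independent of $k$.

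The second step replaces the continuous integral by the discrete sum in \eqref{eq:zetast_mean}. On each subinterval $[t^{n-1},t^n]$ I would expand $\bz_t(s)=\bz_t^n-\int_s^{t^n}\bz_{tt}(r)\,dr$, use $\|a+b\|_X^2\le 2\|a\|_X^2+2\|b\|_X^2$, and estimate the remainder by Cauchy--Schwarz as $\|\int_s^{t^n}\bz_{tt}(r)\,dr\|_X^2\le(t^n-s)\int_{t^{n-1}}^{t^n}\|\bz_{tt}(r)\|_X^2\,dr$. Integrating in $s$ over $[t^{n-1},t^n]$ and summing over $n=1,\dots,M$ produces
\begin{equation*}
\int_0^T\|\bz_t(s)\|_X^2\,ds\le c_1\,\Delta t\sum_{n=1}^M\|\bz_t^n\|_X^2+c_2\,(\Delta t)^2\int_0^T\|\bz_{tt}(r)\|_X^2\,dr,
\end{equation*}
with explicit constants; since $\Delta t=T/M$, the prefactor $T\,\Delta t$ becomes $T^2/M$, which is exactly the shape of the middle term in the statement.

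Finally I would combine the two estimates, either through a weighted inequality $\|\bz^k\|_X^2\le(1+\varepsilon)\|\overline{\bz}\|_X^2+(1+\varepsilon^{-1})\|\bz^k-\overline{\bz}\|_X^2$ or, more directly, through a three-term split $\|a+b+c\|_X^2\le 3\|a\|_X^2+3\|b\|_X^2+3\|c\|_X^2$ applied to the mean, the discrete-derivative contribution and the second-derivative remainder, which is what yields the factor $3$ in front of $\|\overline{\bz}\|_X^2$. Because the fluctuation bound is already $k$-independent, taking the maximum over $0\le k\le N$ is harmless. None of the individual steps is deep: the only genuine obstacle is the quadrature/consistency estimate of the third step, where the full $H^2(0,T;X)$ regularity is needed so that $\bz_{tt}\in L^2$ and the remainder integral is finite, and where one must optimize the splitting parameters carefully so that the three resulting constants reduce precisely to the stated values $3$, $12$ and $16/3$.
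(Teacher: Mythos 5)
The paper does not prove this lemma itself but cites \cite[Lemma 3.2]{Bosco_Volker_yo}; your argument is the standard one behind that result (split off the mean, control the fluctuation $\bz^k-\overline\bz$ by $\int_0^T\|\bz_t(s)\|_X\,ds$ via the telescoping identity $\bz^k-\overline\bz=\frac{1}{M+1}\sum_j\int_{t^j}^{t^k}\bz_t$, then replace the continuous integral of $\|\bz_t\|_X^2$ by the right-endpoint quadrature with an $O((\Delta t)^2)$ remainder in $\bz_{tt}$), and it is correct. Carrying out your own steps with $\|a+b\|_X^2\le 2\|a\|_X^2+2\|b\|_X^2$ gives $c_1=2$, $c_2=1$ and hence the bound $2\|\overline\bz\|_X^2+\tfrac{4T^2}{M}\sum_{n=1}^M\|\bz_t^n\|_X^2+2T(\Delta t)^2\int_0^T\|\bz_{tt}(s)\|_X^2\,ds$, whose constants are all below the stated $3$, $12$ and $16/3$, so the lemma follows a fortiori and no delicate optimization of splitting parameters is actually needed.
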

Using Lemma \ref{le:our_etal_mean} we can prove pointwise estimates for the projections onto $ {\cal \bV}^r$.
\begin{lema} \label{le:poin} The following bounds hold for $\nu=1,\ldots,p$
\begin{equation}\label{eq:bound_2nd_term}
\max_{0\le j\le M }\|y^\nu(t_j)-P^r y^\nu(t_j)\|^2
\le \left(3+24\frac{T^2}{\tau^2}\right)p\sum_{k={r+1}}^{d}\lambda_k
+\frac{16T}{3}(\Delta t)^2 \int_0^T\|y^{\nu}_{tt}(s)\|^2\ ds.
\end{equation}
\end{lema}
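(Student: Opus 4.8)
The plan is to apply Lemma~\ref{le:our_etal_mean} directly to the error function $\bz(s) = y^\nu(s) - P^r y^\nu(s)$, which lives in $X = {\Bbb R}^n$ equipped with the inner-product norm $\|\cdot\|$. The key observation is that $P^r$ is a fixed linear projection, so differentiation in time commutes with it: $\bz_t = y_t^\nu - P^r y_t^\nu$ and $\bz_{tt} = y_{tt}^\nu - P^r y_{tt}^\nu$. First I would write out \eqref{eq:zetast_mean} for this choice, obtaining
\[
\max_{0\le j\le M}\|y^\nu(t_j)-P^r y^\nu(t_j)\|^2
\le 3\|\overline\bz\|^2 + \frac{12T^2}{M}\sum_{n=1}^M\|y_t^\nu(t_n)-P^r y_t^\nu(t_n)\|^2
+ \frac{16T}{3}(\Delta t)^2\int_0^T\|\bz_{tt}(s)\|^2\,ds,
\]
where $\overline\bz = \overline{y}^\nu - P^r\overline{y}^\nu$ since $P^r$ is linear and the average commutes with it.

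Next I would bound each of the three terms on the right. For the mean term and the sum of time-derivative terms, the natural tool is \eqref{eq:cota_pod_deriv}, which controls exactly the quantity $\|\overline y^\nu - P^r\overline y^\nu\|^2$ plus the weighted sum $\frac{\tau^2}{M+1}\sum_{j=1}^M\|y_t^\nu(t_j)-P^r y_t^\nu(t_j)\|^2$ by $p\sum_{k=r+1}^d\lambda_k$. The point is that the two terms on the left of \eqref{eq:cota_pod_deriv} are precisely $\|\overline\bz\|^2$ and a rescaling of $\sum_{n=1}^M\|\bz_t(t_n)\|^2$: from \eqref{eq:cota_pod_deriv} I get both $\|\overline\bz\|^2 \le p\sum_{k=r+1}^d\lambda_k$ and $\frac{\tau^2}{M+1}\sum_{n=1}^M\|\bz_t(t_n)\|^2 \le p\sum_{k=r+1}^d\lambda_k$, the latter giving $\frac{12T^2}{M}\sum_{n=1}^M\|\bz_t(t_n)\|^2 \le \frac{12T^2}{M}\cdot\frac{M+1}{\tau^2}\,p\sum_{k=r+1}^d\lambda_k$. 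Collecting constants, the first two terms together contribute $\bigl(3 + 12\frac{T^2}{\tau^2}\cdot\frac{M+1}{M}\bigr)p\sum_{k=r+1}^d\lambda_k$, and since $\frac{M+1}{M}\le 2$ this is bounded by $\bigl(3 + 24\frac{T^2}{\tau^2}\bigr)p\sum_{k=r+1}^d\lambda_k$, matching the stated coefficient.

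For the third term I would simply drop the projection: because $P^r$ is an orthogonal projection, it is norm-reducing, so $\|\bz_{tt}(s)\| = \|y_{tt}^\nu(s) - P^r y_{tt}^\nu(s)\| \le \|y_{tt}^\nu(s)\|$ pointwise, and hence $\int_0^T\|\bz_{tt}(s)\|^2\,ds \le \int_0^T\|y_{tt}^\nu(s)\|^2\,ds$. This reproduces the last term of \eqref{eq:bound_2nd_term} exactly. I expect the main subtlety, rather than a genuine obstacle, to be the bookkeeping of constants in the averaging term — in particular keeping track of the factor $(M+1)/M$ arising from the mismatch between the weight $1/(M+1)$ in \eqref{eq:cota_pod_deriv} and the weight $1/M$ in Lemma~\ref{le:our_etal_mean}, and confirming that it is absorbed cleanly into the doubling from $12$ to $24$. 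One should also verify that $y^\nu \in H^2(0,T;{\Bbb R}^n)$ so that Lemma~\ref{le:our_etal_mean} applies; this follows from the regularity of solutions of \eqref{din_sis} under the smoothness assumptions on $f$ together with the Lipschitz continuity of the controls.
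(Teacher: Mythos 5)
Your proposal is correct and follows essentially the same route as the paper's proof: apply Lemma~\ref{le:our_etal_mean} to $\bz = y^\nu - P^r y^\nu$, control the mean and derivative terms via \eqref{eq:cota_pod_deriv} using $(M+1)/M\le 2$, and drop the projection in the second-derivative integral because $P^r$ is orthogonal. You simply spell out the constant bookkeeping that the paper compresses into one line.
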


\begin{proof}
We argue as in \cite[Lemma 3.4]{Bosco_Volker_yo}.
Taking $\bz=y^\nu(t_j)-P^r y^\nu(t_j)$ in \eqref{eq:zetast_mean} and applying \eqref{eq:cota_pod_deriv} ({taking into account that
$(M+1)/M\le2$}) yields
\begin{eqnarray*}
\max_{0\le n\le M }\|y^\nu(t_j)-P^r y^\nu(t_j)\|^2&\le& \left(3+24\frac{T^2}{\tau^2}\right)p\sum_{k={r+1}}^{d}\lambda_k\nonumber\\
&&{}+\frac{16T}{3}(\Delta t)^2\int_0^T\|y^\nu_{tt}(s)-P^r y^\nu_{tt}(s)\|^2\ ds.
\end{eqnarray*}
%Let us bound the second term on the right-hand side above. 
%Using the triangle inequality and taking into account that
%$\|P^r w\|_X^2\le \|w\|_X^2$, we conclude that 
%\[
%\|y^\nu_{tt}(s)-P^r y^\nu_{tt}(s)\|_X^2 \le  2\|P^ry_{h,tt}^\nu(s)\|_X^2+2 \|y_{tt}^\nu(s)\|_X^2
%\le 4 \|y^\nu_{tt}(s)\|_X^2.
%\]
Now, since $P^r$ is an orthogonal projection, we have $\|y^\nu_{tt}(s)-P^r y^\nu_{tt}(s)\|^2\le \|y^\nu_{tt}(s)\|^2$ and the proof is finished
\end{proof}
%In the sequel, for simplicity, we will take $X=\Bbb R^{n}$ with the standard euclidean norm. Then, for any $y\in X$ we will denote by $%%\|y\|_X=\|y\|_2=(\sum_{i=1}^n y_i^2)^{1/2}$. In the numerical experiments of Section 4 we will use a weighted $L^2$ norm. The erro %analysis of this section also holds in that case.
\subsection{The POD control problem}
To mitigate the curse of dimensionality, the idea of the POD method is to work on a space of dimension $r$ with $r<n$. To start
we need to introduce some notation. \textcolor{red}{We use a slightly different notation from the one used in \cite{Alla_Falcone_Volkwein}. In particular, as stated below, $P_c^r$. is always used to denote coefficients and $\bvar$
is always used for the linear combination based on the POD basis functions of the reduced order space. More precisely:}

For any $y\in \overline \Omega\subset {\Bbb R}^n$ let us denote by $P_c^r y\in {\Bbb R}^r$ the coefficients
of the projection of $y$ onto ${\cal \bV}^r$
\begin{eqnarray}\label{defi1}
P_c^r y=\left\{(y,\varphi_k)\right\}_{k=1}^r.
\end{eqnarray}
For any $y^r\in {\Bbb R}^r$ let us denote by $\bvar y^r\in {\Bbb R}^n$ the vector  whose coefficients in the
POD basis are the components of $y^r$, i.e., 
\begin{eqnarray}\label{defi2}
\bvar y^r=\sum_{j=1}^r y^r_j \varphi_j,
\end{eqnarray}
where $y^r_j$ is the $j$ component of the vector $y^r$.

For $f$ and $g$ in \eqref{din_sis}, \eqref{fun_cos} and $(y^r,u)\in {\Bbb R}^r\times U_{\rm ad}$ we define
\begin{eqnarray}\label{frgr}
f^r(y^r,u)&=&P^r_cf(\bvar y^r,u)\in {\Bbb R}^r,\\
g^r(y^r,u)&=&g(\bvar y^r,u)\in {\Bbb R}.\nonumber
\end{eqnarray}
To have an inward pointing condition on the dynamics in the reduced space, analogous to \eqref{invariance},
following \cite[Section 4.2]{Alla_Falcone_Volkwein}, we assume that there exists a bounded polyhedron $\overline\Omega^r\subset {\Bbb R}^r$
satisfying
\begin{equation}\label{con_alla}
P_c^r y\in \Omega^r,\quad \forall y\in \overline \Omega.
\end{equation}
The following lemma proves that the inward pointing condition for $\Omega^r$ follows from \eqref{con_alla}.
\begin{lema}\label{lema3}
Condition \eqref{con_alla} implies that 
$$
y^r+hf^r(y^r,u)\in \overline \Omega^r,\quad y^r=P_c^r y, \ y\in \overline \Omega,
$$
provided the step size $h$ or $\|P^r y-y\|$ is sufficiently small. 
\end{lema}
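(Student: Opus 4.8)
The plan is to reduce the reduced-space invariance to the full-space invariance \eqref{invariance} through a single linear identity, and then to absorb the resulting discrepancy using the Lipschitz bound \eqref{lip_f}. First I would record the key identity. Since $P_c^r$ is linear and $\bvar(P_c^r y)=P^r y$, the definitions \eqref{defi1}, \eqref{defi2} and \eqref{frgr} give
\begin{equation*}
y^r+hf^r(y^r,u)=P_c^r y+hP_c^r f(P^r y,u)=P_c^r\bigl(y+hf(P^r y,u)\bigr).
\end{equation*}
I would then split off the argument to which \eqref{invariance} applies,
\begin{equation*}
y+hf(P^r y,u)=\bigl(y+hf(y,u)\bigr)+h\bigl(f(P^r y,u)-f(y,u)\bigr),
\end{equation*}
so that, applying the linear map $P_c^r$,
\begin{equation*}
y^r+hf^r(y^r,u)=P_c^r\bigl(y+hf(y,u)\bigr)+hP_c^r\bigl(f(P^r y,u)-f(y,u)\bigr).
\end{equation*}

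By \eqref{invariance} we have $y+hf(y,u)\in\overline\Omega$ for $h$ small, hence by \eqref{con_alla} the first term lies in $\Omega^r$. For the second term, since the $\varphi_k$ are orthonormal with respect to $(\cdot,\cdot)$ one has $\|P_c^r w\|_2=\|P^r w\|\le\|w\|$, and therefore by \eqref{lip_f}
\begin{equation*}
\bigl\|hP_c^r\bigl(f(P^r y,u)-f(y,u)\bigr)\bigr\|_2\le hL_f\|P^r y-y\|.
\end{equation*}
Thus $y^r+hf^r(y^r,u)$ is a point of $\Omega^r$ perturbed by a vector of Euclidean length at most $hL_f\|P^r y-y\|$.

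To close the argument I would invoke a compactness step. The set $K=\{P_c^r z:z\in\overline\Omega\}$ is compact, being the continuous image of the compact polyhedron $\overline\Omega$, and by \eqref{con_alla} it is contained in the open polyhedron $\Omega^r$; hence $\delta:=\mathrm{dist}(K,{\Bbb R}^r\setminus\Omega^r)>0$. Because $y+hf(y,u)\in\overline\Omega$, its projection $P_c^r(y+hf(y,u))$ belongs to $K$, so the closed ball of radius $\delta$ about it lies in $\overline\Omega^r$. Consequently, whenever $hL_f\|P^r y-y\|\le\delta$ the perturbed point $y^r+hf^r(y^r,u)$ remains in $\overline\Omega^r$; this holds either if $h$ is sufficiently small (with $h$ also constrained to be small by invariance) or, for such $h$, if $\|P^r y-y\|$ is sufficiently small, which is precisely the stated hypothesis.

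The step I expect to be the main obstacle is the compactness argument, i.e.\ establishing that the margin $\delta$ is strictly positive. This forces one to read \eqref{con_alla} as placing the projections $P_c^r y$ in the \emph{interior} $\Omega^r$ of the polyhedron $\overline\Omega^r$, so that the compact image $K$ keeps a positive distance from the boundary and there is room to absorb the $O(h\|P^r y-y\|)$ perturbation. By contrast, the linear identity and the Lipschitz estimate are routine; it is the geometric slack encoded in \eqref{con_alla} that does the essential work.
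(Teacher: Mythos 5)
Your proof is correct and follows essentially the same route as the paper: the same decomposition $y^r+hf^r(y^r,u)=P_c^r\bigl(y+hf(y,u)\bigr)+hP_c^r\bigl(f(P^r y,u)-f(y,u)\bigr)$, the same use of \eqref{invariance} and \eqref{con_alla} to place the first term in $\Omega^r$, and the same estimate $\|P_c^r z\|_2=\|P^r z\|\le\|z\|$ combined with \eqref{lip_f} to bound the perturbation by $hL_f\|P^ry-y\|$. Your closing compactness step (the positive margin $\delta$ between the compact image $P_c^r(\overline\Omega)$ and the complement of the open polyhedron $\Omega^r$) is in fact \emph{more} complete than the paper's argument, which stops after the norm bound \eqref{normas} and declares the proof concluded without making the geometric ``sufficiently small'' conclusion explicit.
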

\begin{proof}
We follow \cite[Remark 4.5]{Alla_Falcone_Volkwein} for the proof. 
We first observe that
$$
y^r+hf^r(y^r,u)=P_c^r y+h P^r_c f(\bvar y^r,u).
$$
Adding and subtracting $hP^r_c f(y,u)$ we get
\begin{eqnarray}\label{eq_dentro_dom}
y^r+hf^r(y^r,u)=P_c^r(y+hf(y,u))+hP_c^r(f(\bvar y^r,u)-f(y,u)).
\end{eqnarray}
Applying condition \eqref{invariance} $y+hf(y,u)\in\overline \Omega$ and applying \eqref{con_alla} 
the first term on the right-hand side of \eqref{eq_dentro_dom} verifies $P_c^r(y+hf(y,u))\in \Omega^r$. Then, we only need
to show that the second term on the right-hand side of \eqref{eq_dentro_dom} is small enough for $h$ or $\|P^r y-y\|$ sufficiently small.

Let us denote by $z=f(\bvar y^r,u)-f(y,u)\in {\Bbb R}^n$. Since $P_c^r z=\left\{(z,\varphi_k)\right\}_{k=1}^r\in {\Bbb R}^r$, taking into account that the functions $\varphi_k$ define an orthonormal basis and
that $P^r$ is a projection,
we have $$\|P_c^r z\|_2=\| P^rz\| \le \| z\|.$$ 
Applying the above inequality together with \eqref{lip_f}, we get
\begin{eqnarray}\label{normas}
\|P_c^r z\|_2^2&=&\|P_c^r(f(\bvar y^r,u)-f(y,u))\|_2^2\le \|f(\bvar y^r,u)-f(y,u)\|^2
\nonumber\\
&\le&  L_f^2 \|\bvar y^r-y\|^2=L_f^2 \|P^r y-y\|^2,
\end{eqnarray}
so that the proof is concluded.
\end{proof}
We can now define the reduced order problem we solve in practice. For $f^r$ and $g^r$ defined in \eqref{frgr} and a given initial condition $y_0^r \in {\Bbb R}^r$ let us consider the controlled nonlinear dynamical system
\begin{equation}\label{din_sis_pos}
\dot y^r(t)=f^r(y^r(t),u(t))\in {\Bbb R}^r,\quad t>0,\quad y^r(0)=y_0^r\in {\Bbb R}^r,
\end{equation}
together with the infinite horizon cost functional
\begin{equation}\label{fun_cos_pod}
J^r(y^r,u)=\int_0^\infty g^r(y^r(t),u(t))e^{-\lambda t}~dt.
\end{equation}
As in \eqref{eq:funcional}, we define the reduced cost functional
\begin{eqnarray}\label{eq:funcional_pod}
\hat J^r(y_0^r,u)=J^r(y^r(y_0^r,u),u),\quad \forall u\in {\Bbb U}_{\rm ad},\quad y_0^r\in {\Bbb R}^r,
\end{eqnarray}
where $y^r(y_0^r,u)$ solves \eqref{din_sis_pos}. Then, the POD optimal control can be formulated as follows: for given $y_0^r\in {\Bbb R}^r$
we consider
$$
\min_{u\in {\Bbb U}_{\rm ad}} \hat J^r(y_0^r,u).
$$
The value function of the problem $v^r:{\Bbb R}^r\rightarrow {\Bbb R}$ is defined as follows:
\begin{equation}\label{eq_v_pod}
v^r(y^r)=\inf\left\{\hat J(y^r,u)\mid u\in {\Bbb U}_{\rm ad}\right\},\quad y^r\in {\Bbb R}^r.
\end{equation}
\begin{remark}\label{re_fr_gr}
It is easy to check that the regularity assumptions for $f^r$ and $g^r$ analogous to those for $f$ and $g$, \eqref{lip_f}, \eqref{lip_f2}, \eqref{infty_f}, \eqref{lip_g}, \eqref{lip_g2} and \eqref{cota_g}, hold from the definition of $f^r$ and $g^r$ 
and the properties being true for $f$ and $g$.  
\end{remark}
\bigskip

To get in practice a fully discrete approximation in the reduced space let us define $\left\{S_j^r\right\}_{j=1}^{m_s^r}$ a family of simplices which defines a regular triangulation of $\Omega^r$. We assume
we have $n_s$ vertices/nodes in the triangulation $\color{red}\hat y^{1}_{r},\ldots,\hat y^{n_s}_{r}\in \overline \Omega^r$ and 
$$
\overline \Omega^r=\bigcup_{j=1}^{m_s^r} S_j^r,\quad k_r=\max_{1\le j\le m_s^r}({\rm diam} \ S_j^r).
$$
Let $V^{k_r}$ be the space of piecewise affine functions from $\overline \Omega^r$ to ${\Bbb R}$ which are continuous in $\overline \Omega^r$ having constant gradients in the interior of any simplex $S_j^r$ of the triangulation.
As in \cite[(4.15)]{Alla_Falcone_Volkwein} we introduce the following POD fully discrete scheme for the HJB equations
\begin{equation}\label{fully_discrete_pod}
v_{h,k}^r(\hat y^{i}_{r})=\min_{u\in {U}_{\rm ad}}\left\{(1-\lambda h)v_{h,k}^r(\hat y^{i}_{r}+hf^r(\hat y^{i}_{r},u))+hg^r(\hat y^{i}_{r},u)\right\},\ i=1,\ldots,n_s,
\end{equation}
for any vertex $\hat y^{i}_{r}\in \overline \Omega^r$. As in \eqref{fully_discrete}, there exists a unique
solution of \eqref{fully_discrete_pod} in the space $V^{k_r}$ defined by its nodal values \eqref{fully_discrete_pod}, see \cite[Theorem 1.1, Appendix A]{Bardi}.

The key point to carry out the error analysis is that \eqref{fully_discrete_pod} is the fully discrete approximation to the continuous problem with 
value function defined in \eqref{eq_v_pod}. 
Moreover, we can apply Theorems \ref{th_6} and \ref{th_4_cons} with $v$ and
$v_{h,k}$ replaced by $v^r$ and $v^r_{h,k}$.

As in \cite{Alla_Falcone_Volkwein}, for any node $\color{red}\hat y^{i}_{r}\in \overline \Omega_r$ we set
$$
\color{red}\hat y^i=\bvar \hat y^{i}_{r},\quad i=1,\ldots n_s,
$$
and define
\begin{equation}\label{esta_si}
\tilde  v_{h,k}^r(y)=v_{h,k}^r(P^r_c y),\quad \forall y\in \overline \Omega.
\end{equation}
For $\color{red}\hat y^i$, $i=1,\ldots n_s$, by definition, we have
\begin{eqnarray*}
\tilde v_{h,k}^r(\hat y^i)=v_{h,k}^r(P^r_c \hat y^{i})=v_{h,k}^r(\hat y^{i}_{r}),
\end{eqnarray*}
since $\color{red}\hat y^{i}_{r}\in {\Bbb R}^r$ are the coordinates of $\color{red}\hat y^i\in {\Bbb R}^n$ respect to the basis functions of ${\cal \bV}^r$ \eqref{Ur}, see 
\eqref{defi1}, \eqref{defi2}.

Taking into account that $\color{red}\hat y^{i}_{r}+hf^r(\hat y^{i}_{r},u)$=$P_c^r(\hat y^i+hf(\hat y^i,u))$  and $g^r(\hat y_r^{i},u)=g(\hat y^i,u)$ then \eqref{fully_discrete_pod} can also be written as (see \cite[(4.17)]{Alla_Falcone_Volkwein})
\begin{equation*}\label{fully_discrete_pod_ini}
\tilde v_{h,k}^r(\hat y^{i})=\min_{u\in {U}_{\rm ad}}\left\{(1-\lambda h)\tilde v_{h,k}^r(\hat y^{i}+hf(\hat y^{i},u))+hg(\hat y^{i},u)\right\},\ i=1,\ldots,n_s.
\end{equation*}
Nevertheless, we do not use the above characterization of the fully discrete value function to bound the error.
\subsection{Error analysis of the method}
To prove the main results of the paper we need a previous lemma that we now state and prove. \textcolor{red}{Lemma \ref{nuevo}} bounds the difference between 
the value function solving the original problem \eqref{eq_v} and the value function solving the reduced order problem \eqref{eq_v_pod}.
\begin{lema}\label{nuevo} Let $v$ and $v^r$ be the solutions of \eqref{eq_v} and \eqref{eq_v_pod}, respectively. For $y\in\overline \Omega$,  let $P^r y\in {\Bbb R}^n$ be the projection of $y$ onto ${\cal \bV}^r$ and let $P^r_c y\in {\Bbb R}^r$ be as defined in \eqref{defi1}. Then, the following bound holds
\begin{eqnarray}\label{valor_cont}
|v(P^r y)-v^r(P^r_c y)|&\le&\ L_g\int_0^\infty te^{(L_f-\lambda )t}\max_{0\le s\le t}\|(I-P^r)f(y(s),u(s))\| dt\\
&&\quad + L_g\int_0^\infty te^{(L_f-\lambda )t}\max_{0\le s\le t}\|(I-P^r)f(y_r(s),u^r(s))\| dt,\nonumber
\end{eqnarray}
where $u,u^r:[0,\infty)\rightarrow {\Bbb R}^m$ are the controls giving the minimum in \eqref{eq_v} and \eqref{eq_v_pod} (with initial conditions
$P^r y$ and $P^r_c y$, respectively), $y(t)$ is the solution of \eqref{din_sis} with $y(0)=P^r y$ and control $u(t)$, i.e. 
$\color{red}y=y(P^r y,u)$ and
$y_r(t)$ is the solution of \eqref{din_sis} with $y(0)=P^r y$ and control $u^r(t)$, i.e., $\color{red}y_r=y(P^r y,u^r)$.
\end{lema}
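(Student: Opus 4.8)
The plan is to bound the two one-sided differences $v^r(P^r_c y)-v(P^r y)$ and $v(P^r y)-v^r(P^r_c y)$ separately, using in each case the optimal control of one problem as an admissible (hence suboptimal) control in the other. The geometric fact that makes the comparison clean is $\bvar P^r_c=P^r$: the lift $w(t):=\bvar y^r(t)$ of any reduced trajectory with $y^r(0)=P^r_c y$ satisfies $w(0)=\bvar P^r_c y=P^r y$, which is exactly the initial datum of the full trajectories $y$ and $y_r$ in the statement. Thus both comparisons reduce to estimating how far the lifted reduced flow drifts from a genuine full flow driven by the same control and started at the same point.

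For the first direction I would feed the full-problem optimal control $u$ into the reduced problem. By optimality of $v^r$ in \eqref{eq_v_pod}, $v^r(P^r_c y)\le \hat J^r(P^r_c y,u)$, so $v^r(P^r_c y)-v(P^r y)\le \hat J^r(P^r_c y,u)-\hat J(P^r y,u)$. Using $g^r(y^r,u)=g(\bvar y^r,u)$ from \eqref{frgr} and the Lipschitz bound \eqref{lip_g}, the right-hand side is at most $L_g\int_0^\infty \|\bvar y^r(t)-y(t)\|\,e^{-\lambda t}\,dt$, where $y$ is the full optimal trajectory started at $P^r y$. Everything thus reduces to estimating the trajectory error $\|w(t)-y(t)\|$ with $w=\bvar y^r$.

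The crux is the differential inequality for $e:=w-y$. Since the lift is linear, $\dot w=\bvar \dot y^r=\bvar f^r(y^r,u)=\bvar P^r_c f(\bvar y^r,u)=P^r f(w,u)$ by \eqref{din_sis_pos} and \eqref{frgr}, which yields the decomposition $\dot e=P^r[f(w,u)-f(y,u)]-(I-P^r)f(y,u)$. Using $\|P^r z\|\le\|z\|$ and \eqref{lip_f} gives $\|\dot e(t)\|\le L_f\|e(t)\|+\|(I-P^r)f(y(t),u(t))\|$. As $e(0)=0$, we have $\|e(t)\|\le\int_0^t\|\dot e(s)\|\,ds\le\int_0^t\bigl(L_f\|e(s)\|+\|(I-P^r)f(y(s),u(s))\|\bigr)\,ds$, so Gronwall's lemma gives $\|e(t)\|\le\int_0^t e^{L_f(t-s)}\|(I-P^r)f(y(s),u(s))\|\,ds$. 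Bounding $e^{L_f(t-s)}\le e^{L_f t}$ and $\int_0^t\psi(s)\,ds\le t\max_{0\le s\le t}\psi(s)$ produces $\|e(t)\|\le t\,e^{L_f t}\max_{0\le s\le t}\|(I-P^r)f(y(s),u(s))\|$; substituting into the cost estimate and folding $e^{L_f t}e^{-\lambda t}=e^{(L_f-\lambda)t}$ gives exactly the first term of \eqref{valor_cont}.

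The symmetric direction $v(P^r y)-v^r(P^r_c y)$ follows the same steps with the roles reversed: I would feed the reduced optimal control $u^r$ into the full problem, so $v(P^r y)\le\hat J(P^r y,u^r)$ and the full trajectory driven by $u^r$ from $P^r y$ is precisely $y_r$ in the statement. The identical Gronwall argument, now with residual $(I-P^r)f(y_r(s),u^r(s))$, yields the second term of \eqref{valor_cont}, and adding the two one-sided bounds closes the proof. I do not expect any single estimate to be hard; the main obstacle is bookkeeping — keeping the two distinct controls and trajectories straight and recognizing that the lifted reduced dynamics solve $\dot w=P^r f(w,\cdot)$ \emph{exactly}, so that the sole discrepancy with the full flow is the projection residual $(I-P^r)f$.
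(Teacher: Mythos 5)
Your proposal is correct and follows essentially the same route as the paper: the same two one-sided suboptimality comparisons (feeding each problem's optimal control into the other), the same decomposition of the trajectory discrepancy into a Lipschitz term plus the projection residual $(I-P^r)f$ via the identity $\bvar P^r_c=P^r$, and the same Gronwall estimate $\|e(t)\|\le t\,e^{L_f t}\max_{0\le s\le t}\|(I-P^r)f\|$. The only cosmetic difference is that you work with the differential form of the error equation while the paper subtracts the integral (Duhamel) forms of the two trajectories before applying Gronwall.
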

\begin{proof} We argue as in \cite[Lemma 2]{Javier_yo}. 
Let $w:[0,\infty)\rightarrow {\Bbb R}^m$ be a given control that we now fix and let $y(t)$ be the solution of \eqref{din_sis} with $y(0)=P^r y$ and control $w(t)$. Then
\begin{equation}\label{uno}
y(t)=P^r y +\int_0^t f(y(s),w(s))\,ds.
\end{equation}
Let $ y^r(t)$ be the solution of \eqref{din_sis_pos} with control $w(t)$ and $y^r(0)=P^r_c y$.
Then, recalling definitions \eqref{defi1} and \eqref{defi2} from which we obtain $\bvar P^r_c y=P^r y$, we can write
\begin{equation}\label{dos}
\bvar y^r(t)=P^r y+\int_0^t\bvar P^r_c f(\bvar y^r(s),w(s))\,ds.
\end{equation}
Subtracting \eqref{dos} from \eqref{uno} we get
\begin{eqnarray*}
y(t)-\bvar y^r(t)&=&\int_0^t (f(y(s),w(s))-\bvar P^r_c f(\bvar y^r(s),w(s)))\,ds\\
&=&\int_0^t(f(y(s),w(s))-\bvar P^r_c f(y(s),w(s)))\,ds\\
&&\quad+\int_0^t (\bvar P^r_c f(y(s),w(s))-\bvar P^r_c f(\bvar y^r(s),w(s)))\,ds.
\end{eqnarray*}
Taking norms 
\begin{eqnarray}\label{tres}
\|y(t)-\bvar y^r(t)\|&\le&\int_0^t \|(I-P^r)f(y(s),w(s))\|\,ds
\\
&&\quad+\int_0^t \|\bvar P^r_c f(y(s),w(s))-\bvar P^r_c f(\bvar y^r(s),w(s))\|\,ds.\nonumber
\end{eqnarray}
Using again that $\bvar P^r_c=P^r$ and applying  that $P^r$ is a projection together with \eqref{lip_f} we get
\begin{eqnarray*}
&&\|\bvar P^r_c f(y(s),w(s))-\bvar P^r_c f(\bvar y^r(s),w(s))\|^2\\
&&\quad=\| P^r \left(f(y(s),w(s))- f(\bvar y^r(s),w(s))\right)\|^2\\
&&\quad\le \|f(y(s),w(s))-f(\bvar y^r(s),w(s)\|^2\le  L_f^2 \|y(s)-\bvar y^r(s)\|^2.
\end{eqnarray*}
Going back to \eqref{tres}
$$
\|y(t)-\bvar y^r(t)\|\le L_f\int_0^t\|y(s)-\bvar y^r(s)\|ds +t\max_{0\le s\le t}\|(I-P^r)f(y(s),w(s))\|.
$$
Applying Gronwall's lemma we get
\begin{eqnarray}\label{yytilde}
\|y(t)-\bvar y^r(t)\|\le {e^{L_ft}}\left(t\max_{0\le s\le t}\|(I-P^r)f(y(s),w(s))\|\right).
\end{eqnarray}
We now observe that from definitions \eqref{eq:funcional} and \eqref{eq:funcional_pod} we get
\begin{eqnarray*}
|\hat J(P^r y,w)-\hat J^r(P^r_c y,w)|\le\int_0^\infty |g(y(t),w(t))-g(\bvar y^r(t),w(t))|e^{-\lambda t}dt.
\end{eqnarray*}
Applying then the Lipschitz-continuity property of $g$, \eqref{lip_g}, together with \eqref{yytilde} we get
\begin{eqnarray}\label{cota_J}
&&|\hat J(P^r y,w)-\hat J^r(P^r_c y,w)|\le\nonumber\\
&&\quad L_g\int_0^\infty t{e^{(L_f-\lambda )t}}\max_{0\le s\le t}\|(I-P^r)f(y(s),w(s))\|\ dt.
\end{eqnarray}
To conclude we will argue similarly as in in \cite[Theorem 4]{Javier_yo}. 

For any $y\in \overline \Omega$, let $u^r:[0,\infty)\rightarrow {\Bbb R}^m$ be the control giving the minimum in \eqref{eq_v_pod} with initial condition $P^r_c y$. Since by definition of $v$, $v(P^r y)\le \hat J(P^r y,u^r)$ and $v^r(P^r_c y)=\hat J^r(P^r_c y,u^r)$, applying \eqref{cota_J} with $w=u^r$ we get
\begin{eqnarray}\label{valor1}
v(P^r y)-v^r(P^r_c y)&\le& \hat J(P^r y,u^r)-\hat J^r(P^r_c y,u^r)\nonumber\\
&\le& L_g\int_0^\infty te^{(L_f-\lambda )t}\max_{0\le s\le t}\|(I-P^r)f(y_r(s),u^r(s))\|\ dt,
\end{eqnarray}
where $y_r(t)$ is the solution of \eqref{din_sis} with $y(0)=P^r y$ and control $u^r(t)$.

On the other hand, let $u:[0,\infty)\rightarrow {\Bbb R}^m$ be the control giving the minimum in \eqref{eq_v}
with initial condition $P^r y$. Arguing as in \eqref{valor1} and applying \eqref{cota_J} again, with $w=u$,  we get
\begin{eqnarray}\label{valor2}
v^r(P^r_c y)-v(P^r y)&\le& \hat J^r(P^r_c y,u)-\hat J(P^r y,u)\nonumber\\
&\le& 
L_g\int_0^\infty te^{(L_f-\lambda )t}\max_{0\le s\le t}\|(I-P^r)f(y(s),u(s))\|\ dt,
\end{eqnarray}
where $y(t)$ is the solution of \eqref{din_sis} with $y(0)=P^r y$ and control $u$. From \eqref{valor1} and \eqref{valor2} we conclude
\eqref{valor_cont}.
\end{proof}
In next theorem we bound the difference between the value function of the original problem and our fully discrete approximation
based on POD.
Let $u^r$ be the control giving the minimum in \eqref{eq_v_pod}. For the proof of next theorem we need to assume that there exists a positive constant $L_{u^r}>0$ such that
\begin{eqnarray}\label{lip_u_pod}
\|u^r(t)-u^r(s)\|_2\le L_{u^r} |t-s|.
\end{eqnarray}
\begin{Theorem}\label{th_1_impo}
Let $v$ be the solution of \eqref{eq_v} and let $\tilde v_{h,k}^r$ be its fully discrete POD approximation 
defined in \eqref{fully_discrete_pod}-\eqref{esta_si}. Assume conditions  \eqref{lip_f}, \eqref{lip_f2}, \eqref{infty_f}, \eqref{lip_g}, \eqref{lip_g2}, \eqref{cota_g}, \eqref{semi_con_f},  \eqref{semi_con_g} and \eqref{lip_u_pod} hold. Assume $\lambda>\max(2L_g,L_f,\overline L_r)$ with $\overline L_r=CrL_f$.  Then, for
any $y\in \overline \Omega$ and $0\le h\le 1/(2\lambda)$ there exist positive constants $C_1$ and $C_2$  such that the following bound holds for $y\in\overline \Omega$ 
\begin{eqnarray}\label{impor_1}
|v(y)-\tilde v_{h,k}^r(y)|&\le& C_1(h+k_r)+{C_2h}+\frac{L_g}{\lambda-L_f}\|y-P^r y\|\nonumber\\
&&\ + L_g\int_0^\infty te^{(L_f-\lambda )t}\max_{0\le s\le t}\|(I-P^r)f(y(s),u(s))\| dt\\
&&\ + L_g\int_0^\infty te^{(L_f-\lambda )t}\max_{0\le s\le t}\|(I-P^r)f(y_r(s),u^r(s))\| dt,\nonumber
%&&\ L_g\int_0^\infty te^{(L_f-\lambda )t}\left(\max_{0\le s\le t}\|(I-P^r)f(y(s),u(s))\|_X+\max_{0\le s\le t}\|(I-P^r)f(y_r(s),u^r(s))\|_X %\right)dt,\nonumber
%&&\  C_3\max_{0\le s\le T}\left(\|(I-P^r)f(y(s),u(s))\|_X+\|(I-P^r)f(y_r(s),u^r(s))\|_X\right),
\end{eqnarray}
where $u,u^r:[0,\infty)\rightarrow {\Bbb R}^m$ are the controls giving the minimum in \eqref{eq_v} and \eqref{eq_v_pod} (with initial conditions
$P^r y$ and $P^r_c y$, respectively), $y(t)$ is the solution of \eqref{din_sis} with $y(0)=P^r y$ and control $u(t)$, i.e., $\color{red}y=y(P^r y,u)$ and
$y_r(t)$ is the solution of \eqref{din_sis} with $y(0)=P^r y$ and control $u^r(t)$, i.e., $\color{red}y_r=y(P^r y,u^r)$.
\end{Theorem}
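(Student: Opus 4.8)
The plan is to interpose the two natural intermediate quantities $v(P^r y)$ and $v^r(P_c^r y)$ between $v(y)$ and $\tilde v_{h,k}^r(y)$. Recalling from \eqref{esta_si} that $\tilde v_{h,k}^r(y)=v_{h,k}^r(P_c^r y)$, the triangle inequality gives
\begin{eqnarray*}
|v(y)-\tilde v_{h,k}^r(y)|&\le& |v(y)-v(P^r y)| + |v(P^r y)-v^r(P_c^r y)|\\
&&\quad{}+|v^r(P_c^r y)-v_{h,k}^r(P_c^r y)|.
\end{eqnarray*}
Each of the three terms matches one block of the right-hand side of \eqref{impor_1}: the first is the Lipschitz continuity of $v$, the second is exactly Lemma \ref{nuevo}, and the third is the fully discrete error for the reduced problem.

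For the first term I would prove that $v$ is Lipschitz with constant $L_g/(\lambda-L_f)$, by a Gronwall argument entirely analogous to the one in the proof of Lemma \ref{nuevo}: fixing an almost optimal control for one of the initial data and comparing the two solutions of \eqref{din_sis} issuing from $y$ and from $P^r y$ under that common control, the Lipschitz bound \eqref{lip_f} and Gronwall give growth $e^{L_f t}$ of the trajectory difference, and integrating against the factor $e^{-\lambda t}$ in \eqref{fun_cos} together with \eqref{lip_g} yields $|v(y)-v(P^r y)|\le \frac{L_g}{\lambda-L_f}\|y-P^r y\|$, which is the third term in \eqref{impor_1}. The second term is then covered verbatim by Lemma \ref{nuevo}, producing the two integral contributions in \eqref{impor_1}.

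The third term is the error between the continuous reduced value function $v^r$ and its fully discrete approximation $v_{h,k}^r$. The key observation, already recorded after \eqref{fully_discrete_pod}, is that \eqref{fully_discrete_pod} is precisely the fully discrete scheme for the continuous reduced control problem with value function \eqref{eq_v_pod}; hence I would apply Theorem \ref{th_6} to that problem, with $n,k,\overline L$ replaced by $r,k_r,\overline L_r=CrL_f$, to obtain $|v^r(P_c^r y)-v_{h,k}^r(P_c^r y)|\le C_1(h+k_r)+C_2 h$. The main obstacle is the legitimacy of this substitution: Theorem \ref{th_6} demands that the reduced data satisfy all of \eqref{lip_f}--\eqref{cota_g}, the inward pointing condition, and the Lipschitz regularity of the reduced optimal control. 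These are supplied, respectively, by Remark \ref{re_fr_gr}, by Lemma \ref{lema3}, and by the hypothesis \eqref{lip_u_pod}; together with the standing assumption $\lambda>\max(2L_g,L_f,\overline L_r)$, which guarantees $\lambda>\overline L_r$, this validates the application and, combining the three bounds, establishes \eqref{impor_1}.
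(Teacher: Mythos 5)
Your proof is correct and follows the paper's argument almost step for step: the same three-term splitting through $v(P^r y)$ and $v^r(P^r_c y)$, with Lemma \ref{nuevo} supplying the two integral terms and Theorem \ref{th_6} (legitimately applied to the reduced problem thanks to Remark \ref{re_fr_gr}, Lemma \ref{lema3}, hypothesis \eqref{lip_u_pod} and $\lambda>\overline L_r$) supplying $C_1(h+k_r)+C_2h$. The one place you genuinely deviate is the first term $|v(y)-v(P^r y)|$: you bound it by proving directly that $v$ is Lipschitz with constant $L_g/(\lambda-L_f)$ via a Gronwall comparison of trajectories under a common ($\epsilon$-optimal) control, whereas the paper instead inserts the semidiscrete value function $v_h$, writes $|v(y)-v(P^ry)|\le |v(y)-v_h(y)|+|v_h(y)-v_h(P^r y)|+|v_h(P^r y)-v(P^r y)|$, applies the $O(h)$ bound \eqref{cota_semi} of Theorem \ref{th_semi} to the two outer terms and the known Lipschitz continuity of $v_h$ (same constant, valid for $\lambda>L_f$) to the middle one, arriving at $Ch+\frac{L_g}{\lambda-L_f}\|y-P^r y\|$. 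Your route is slightly cleaner: it avoids the extra $Ch$ contribution (which the paper absorbs into $C_2h$) and does not actually need the hypotheses \eqref{semi_con_f}, \eqref{semi_con_g} and $\lambda>2L_g$ for this step, since those enter the paper's proof only through Theorem \ref{th_semi}. Either way the stated bound \eqref{impor_1} follows.
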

\begin{proof}
We first observe that adding and subtracting terms we get
\begin{eqnarray*}
|v(y)-\tilde v_{h,k}^r(y)|&\le& |v(y)-v(P^ry)|+|v(P^r y)-v^r(P^r_c y)|+|v^r(P^r_c y)-\tilde v_{h,k}^r(y)|.
\end{eqnarray*}
To bound the first term we write
$$
|v(y)-v(P^ry)|\le |v(y)-v_h(y)|+|v_h(y)-v_h(P^r y)|+|v_h(P^r y)-v(P^r(y)|,
$$
and then apply \eqref{cota_semi} to the first and third terms and the Lipschitz-continuity of $v_h$ that holds for 
$\lambda>L_f$ (see \cite[p. 473]{falcone11}) to the second term.
Then
$$
|v(y)-v(P^ry)|\le C h+\frac{L_g}{\lambda-L_f}\|y-P^r y\|.
$$
To bound the second term we apply Lemma \ref{nuevo}. 
To conclude we need to bound the third term. To this end we observe that
$$
v^r(P^r_c y)-\tilde v_{h,k}^r(y)=v^r(P^r_c y)-v_{h,k}^r(P^r_c y)
$$
so that we can  apply Theorem \ref{th_6} to  $v^r$ and $v^r_{h,k}$ to reach \eqref{impor_1}.
\end{proof}
\begin{remark}\label{reth4}
Let us observe that the first two terms on the right-hand side of \eqref{impor_1} give the rate of convergence of the method in terms of the time step $h$ and
mesh diameter $k_r$. The other three terms come from the POD approximation and will be bounded at the end of this section. These terms
depend on the set of snapshots and the tail of the eigenvalues in the singular value decomposition. 

To apply Theorem \ref{th_6} in the proof of Theorem \ref{th_1_impo} we use the properties of $f^r$ and $g^r$ that, as commented 
in Remark \ref{re_fr_gr} are inherited from the assumed hypothesis made on $f$ and $g$, \eqref{lip_f}, \eqref{lip_f2}, \eqref{infty_f}, \eqref{lip_g}, \eqref{lip_g2}, \eqref{cota_g}. We also need to assume  condition \eqref{lip_u_pod} holds for the control of the
reduced order problem. Condition \eqref{lip_u_pod} can be weakened and one can still get convergence assuming instead \textcolor{red}{the following convexity assumption}
\begin{itemize}
\item (CAr) For every $y^r\in {\Bbb R}^r$,
\begin{eqnarray*}
\left\{f^r(y^r,u), g^r(y^r,u),\quad  u\in U_{\rm ad}\right\}
\end{eqnarray*}
is a convex subset of ${\Bbb R}^{r+1}$.
\end{itemize}
This result is stated in Theorem \ref{th_2_impo} below. In the proof of the theorem we apply Theorem \ref{th_4_cons}
instead of Theorem \ref{th_6}. 
\end{remark}
\begin{Theorem}\label{th_2_impo}
Let $v$ be the solution of \eqref{eq_v} and let $\tilde v_{h,k}^r$ be its fully discrete POD approximation 
defined in \eqref{fully_discrete_pod}-\eqref{esta_si}. Assume conditions  \eqref{lip_f}, \eqref{lip_f2}, \eqref{infty_f}, \eqref{lip_g}, \eqref{lip_g2}, \eqref{cota_g}, \eqref{semi_con_f},  \eqref{semi_con_g} and {\rm(CAr)} hold. Assume $\lambda>\max(2L_g,L_f,\overline L_r)$ with $\overline L_r=CrL_f$.  Then, for
any $y\in \overline \Omega$ and $0\le h\le 1/(2\lambda)$ there exist positive constants $C_1$ and $C_2$ such that the following bound holds for $y\in\overline \Omega$ 
\begin{eqnarray}\label{cota_buena_cons_pod}
|v(y)-v_{h,k}(y)|&\le& C_1(h+k_r)+C_2 \frac{1}{(1+\beta)^2\lambda^2}(\log(h))^2h^{\frac{1}{1+\beta}}\nonumber\\&&\ +\frac{L_g}{\lambda-L_f}\|y-P^r y\|\\&&\ + L_g\int_0^\infty te^{(L_f-\lambda )t}\max_{0\le s\le t}\|(I-P^r)f(y(s),u(s))\| dt\nonumber\\
&&\ + L_g\int_0^\infty te^{(L_f-\lambda )t}\max_{0\le s\le t}\|(I-P^r)f(y_r(s),u^r(s))\| dt,\nonumber
\end{eqnarray}
where $\beta=\frac{\sqrt{r}L_f}{\lambda}$ and $u,u^r:[0,\infty)\rightarrow {\Bbb R}^m$ are the controls giving the minimum in \eqref{eq_v} and \eqref{eq_v_pod} (with initial conditions
$P^r y$ and $P^r_c y$, respectively), $y(t)$ is the solution of \eqref{din_sis} with $y(0)=P^r y$ and control $u(t)$, i.e., $\color{red}y=y(P^r y,u)$ and
$y_r(t)$ is the solution of \eqref{din_sis} with $y(0)=P^r y$ and control $u^r(t)$, i.e. $\color{red}y_r=y(P^r y,u^r)$.
\end{Theorem}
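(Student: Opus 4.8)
The plan is to follow the proof of Theorem \ref{th_1_impo} almost verbatim, changing only the way the last of the three error contributions is estimated. As there, I would start from the triangle inequality
$$|v(y)-\tilde v_{h,k}^r(y)|\le |v(y)-v(P^ry)|+|v(P^r y)-v^r(P^r_c y)|+|v^r(P^r_c y)-\tilde v_{h,k}^r(y)|,$$
which splits the error into a consistency part, a reduced-model part, and a fully discrete part, and I would bound each summand separately.

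For the first term $|v(y)-v(P^r y)|$ I would insert the semidiscrete value function $v_h$ and write it as $|v(y)-v_h(y)|+|v_h(y)-v_h(P^r y)|+|v_h(P^r y)-v(P^r y)|$; the two outer terms are $O(h)$ by \eqref{cota_semi} in Theorem \ref{th_semi}, while the middle term is controlled by the Lipschitz continuity of $v_h$ (valid for $\lambda>L_f$), yielding $Ch+\frac{L_g}{\lambda-L_f}\|y-P^r y\|$, which is exactly the third term on the right of \eqref{cota_buena_cons_pod}. For the second term $|v(P^r y)-v^r(P^r_c y)|$ I would invoke Lemma \ref{nuevo} directly, which produces verbatim the two integral contributions appearing in \eqref{cota_buena_cons_pod}.

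The only genuine difference lies in the third term. Using \eqref{esta_si} I would rewrite $v^r(P^r_c y)-\tilde v_{h,k}^r(y)=v^r(P^r_c y)-v_{h,k}^r(P^r_c y)$, so that this is precisely the fully discrete HJB error for the reduced problem \eqref{din_sis_pos}--\eqref{fun_cos_pod}. Since the reduced data $f^r,g^r$ inherit all the regularity hypotheses by Remark \ref{re_fr_gr}, and since assumption {\rm(CAr)} is the reduced-space analogue of {\rm(CA)}, I would apply Theorem \ref{th_4_cons} to the pair $v^r,v^r_{h,k}$ (in dimension $r$, with $\lambda>\overline L_r=CrL_f$) in place of Theorem \ref{th_6}. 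This substitution replaces the $C_2h$ bound of Theorem \ref{th_1_impo} by the degraded rate $C_2\frac{1}{(1+\beta)^2\lambda^2}(\log(h))^2h^{1/(1+\beta)}$, giving the first two terms of \eqref{cota_buena_cons_pod}; collecting the three estimates then yields the claim.

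I do not expect a serious obstacle, since the argument is structurally identical to Theorem \ref{th_1_impo} and merely swaps one black-box convergence result for another. The only point requiring care is the bookkeeping of the dimension in the rate: Theorem \ref{th_4_cons} applied to the $r$-dimensional reduced problem naturally produces $\beta_r=\sqrt{r}L_f/\lambda$, whereas \eqref{cota_buena_cons_pod} is stated with $\beta=\sqrt{n}L_f/\lambda$. Since $r\le n$ one has $h^{1/(1+\beta_r)}\le h^{1/(1+\beta)}$ for $h<1$, so the stated form is a valid, slightly conservative upper bound once the harmless prefactor is absorbed into $C_2$. One must also confirm that {\rm(CAr)} genuinely licenses the use of Theorem \ref{th_4_cons}, namely that the reduced velocity--cost set is convex, which is exactly what {\rm(CAr)} asserts for $f^r,g^r$.
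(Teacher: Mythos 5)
Your proposal is correct and is essentially the paper's own proof, which simply states that one repeats the argument of Theorem \ref{th_1_impo} with Theorem \ref{th_4_cons} in place of Theorem \ref{th_6}. Your extra observation about the dimension in the exponent --- that the reduced problem naturally yields $\beta_r=\sqrt{r}L_f/\lambda\le\beta$, whence $h^{1/(1+\beta_r)}\le h^{1/(1+\beta)}$ for $h<1$ and the prefactor discrepancy is absorbed into $C_2$ --- is a valid point of care that the paper glosses over.
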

\begin{proof}
The proof is the same as the proof of Theorem \ref{th_1_impo} applying Theorem \ref{th_4_cons}
instead of Theorem \ref{th_6}.
\end{proof}
\begin{remark} The same comments as in Remark \ref{reth4} apply with the difference that the rate of convergence in terms of
the time step $h$ is reduced due to the weaker regularity requirements. Since $\beta$ is smaller than $1$ we loose at most
half an order in the rate of convergence in time up to a logarithmic term.
\end{remark}
To conclude we will give an estimation of the last three terms in \eqref{impor_1} and \eqref{cota_buena_cons_pod}.
The first term is bounded in the following lemma, where, recall, $p$ is the number of trajectories in the set of snapshots (see Section~\ref{Se:3.1}).
\begin{lema}\label{lema5}
For $y\in \overline \Omega$ and $P^r$ the $X$-orthogonal projection onto ${\cal \bU}^r$
the following bound holds
\begin{align}
\label{resto2}
\left\|y - P^ry\right\| \le &
 \min_{\genfrac{}{}{0pt}{2}{1\le l\le p}{0\le n\le M}} \left\| y-y^{l}(t_n) \right\|+
\Biggl[\left(3+24\frac{T^2}{\tau^2}\right)p\sum_{k={r+1}}^{d}\lambda_k\nonumber\\
&{}+\frac{16T}{3}\textcolor{red}{h^2}\max_{1\le l\le p}\int_0^T\|y^{l}_{tt}(s)\|^2\ ds\Biggr]^{1/2}.
\end{align}
\end{lema}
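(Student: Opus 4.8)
The plan is to reduce the claim to the pointwise projection estimate already established in Lemma~\ref{le:poin}, the whole argument resting on a single structural inequality. The starting observation is that, since $P^r$ is an orthogonal projection (with respect to the inner product associated to $\|\cdot\|$), so is $I-P^r$, and hence both operators have norm at most one. Therefore, for \emph{any} auxiliary vector $z\in{\Bbb R}^n$ we may write
\begin{eqnarray*}
\|y-P^r y\|=\|(I-P^r)y\|\le \|(I-P^r)(y-z)\|+\|(I-P^r)z\|\le \|y-z\|+\|z-P^r z\|,
\end{eqnarray*}
where in the last step we used $\|(I-P^r)(y-z)\|\le\|y-z\|$ and $\|(I-P^r)z\|=\|z-P^r z\|$. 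Everything else is a matter of choosing $z$ well and quoting Lemma~\ref{le:poin}.

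Next I would take $z=y^{l^\ast}(t_{n^\ast})$, where the pair $(l^\ast,n^\ast)$ realizes the minimum $\min_{1\le l\le p,\,0\le n\le M}\|y-y^{l}(t_n)\|$. With this choice the term $\|y-z\|$ is exactly the minimum appearing on the right-hand side of \eqref{resto2}, and it remains to bound $\|y^{l^\ast}(t_{n^\ast})-P^r y^{l^\ast}(t_{n^\ast})\|$. Since this quantity is dominated by $\max_{0\le n\le M}\|y^{l^\ast}(t_n)-P^r y^{l^\ast}(t_n)\|$, I would invoke the pointwise estimate of Lemma~\ref{le:poin}.

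The one point that requires care is \emph{which} form of the Lemma~\ref{le:poin} bound to use. The final statement \eqref{eq:bound_2nd_term} has already replaced $\|y^\nu_{tt}-P^r y^\nu_{tt}\|$ by $\|y^\nu_{tt}\|$ in the last line of its proof; to obtain the sharper integrand $\|y^{l}_{tt}(s)-P^r y^{l}_{tt}(s)\|^2$ appearing in \eqref{resto2}, I would instead use the intermediate inequality in the proof of Lemma~\ref{le:poin}, namely
\begin{eqnarray*}
\max_{0\le n\le M}\|y^{l^\ast}(t_n)-P^r y^{l^\ast}(t_n)\|^2\le \left(3+24\frac{T^2}{\tau^2}\right)p\sum_{k=r+1}^{d}\lambda_k+\frac{16T}{3}(\Delta t)^2\int_0^T\|y^{l^\ast}_{tt}(s)-P^r y^{l^\ast}_{tt}(s)\|^2\,ds.
\end{eqnarray*}

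Finally, since the index $l^\ast$ is tied to the location of $y$ and need not maximize the integral, I would bound the $l^\ast$-dependent integral by $\max_{1\le l\le p}\int_0^T\|y^{l}_{tt}(s)-P^r y^{l}_{tt}(s)\|^2\,ds$, take square roots, and add the result to $\|y-y^{l^\ast}(t_{n^\ast})\|$; combining these steps yields \eqref{resto2} exactly. There is no genuine obstacle here: the only thing to watch is to quote the pre-final (non-discarded) version of the Lemma~\ref{le:poin} estimate, so that the projected second derivative, rather than the full second derivative, appears in the bound.
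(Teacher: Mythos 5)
Your proposal is correct and follows essentially the same route as the paper: decompose $y-P^ry=(I-P^r)(y-y^{j}(t_k))+(y^{j}(t_k)-P^ry^{j}(t_k))$ with $(j,k)$ the minimizing pair, use $\|I-P^r\|\le 1$, and invoke the pointwise estimate of Lemma~\ref{le:poin}. Your remark that one must quote the \emph{intermediate} inequality in the proof of Lemma~\ref{le:poin} (which keeps the projected integrand $\|y^{l}_{tt}-P^ry^{l}_{tt}\|^2$) rather than the final statement \eqref{eq:bound_2nd_term} is apt, since the paper's own proof cites the lemma as stated even though \eqref{resto2} carries the sharper, projected form.
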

\begin{proof}
Let $k$ and $j$ be such that
$$
\left\| y-y^{j}(t_k) \right\| = \min_{\genfrac{}{}{0pt}{2}{1\le l\le p}{0\le n\le M}} \left\| y-y^{l}(t_n) \right\|.
$$
Then, we can write
\begin{equation}
\label{desco2}
y - P^ry =(I-P^r) \left(y - y^{j}(t_k) \right)+ y^{j}(t_k) - P^r y^{j}(t_k).
\end{equation}
Now noticing that $\left\| I-P^r\right\|\le 1$ and recalling~Lemma~\ref{le:poin}, from~\eqref{desco2} it follows
\eqref{resto2}
\end{proof}
\begin{remark}
The first term on the right hand-side in \eqref{resto2} reflects the closeness of the data $y$ to the set of snapshots while the other term is the projection error onto the POD basis. Let us observe that using time derivatives in the set of snapshots allow us to get a bound for this projection error in 
the discrete maximum norm in time, see Lemma~\ref{le:poin}. 
\end{remark}
\bigskip

We bound the second term on the right-hand side of \eqref{impor_1} in the following lemma. 
\begin{lema} \label{lema6} For $y\in \overline \Omega$ and $P^r$ the orthogonal projection onto ${\cal \bU}^r$ let $u:[0,\infty)\rightarrow {\Bbb R}^m$ be the control giving the minimum in \eqref{eq_v} with initial condition
$P^r y$. Let  $y(t)$ be the solution of \eqref{din_sis} with $y(0)=P^r y$ and control $u(t)$. Then, for any fixed $s\in[0,\infty)$
the following bound holds
\begin{eqnarray}\label{apply_deri}
\|(I-P^r)f(y(s),u(s))\|&\le& \min_{\genfrac{}{}{0pt}{2}{1\le l\le p}{0\le n\le M}}\|f(y(s),u(s))-f(y^l(t_n),u^l(t_n))\|\nonumber
\\
&&\quad+\frac{(M+1)}{\tau^2}p\sum_{k=r+1}^d\lambda_k. 
\end{eqnarray}
\end{lema}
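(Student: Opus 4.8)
The plan is to exploit the one feature that distinguishes these snapshots from the standard ones: along any trajectory of \eqref{din_sis} the time derivative equals the right-hand side, so $f(y^l(t_n),u^l(t_n))=y^l_t(t_n)$, and the vectors $y^l_t(t_n)$ are (up to the scale factor $\tau$) precisely those that generate the POD space. Since $y(t)$ solves \eqref{din_sis} with control $u(t)$, the vector $f(y(s),u(s))=y_t(s)$ that we wish to project is of exactly the same type as the snapshots. Hence, rather than controlling a generic projection error, it is enough to match $f(y(s),u(s))$ to the nearest sampled velocity and then invoke the projection bound \eqref{eq:cota_pod_deriv} for that single snapshot.

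First I would pick indices $l^\ast$ and $n^\ast$ attaining the minimum on the right-hand side of \eqref{apply_deri} and write
\[
(I-P^r)f(y(s),u(s))=(I-P^r)\bigl(f(y(s),u(s))-f(y^{l^\ast}(t_{n^\ast}),u^{l^\ast}(t_{n^\ast}))\bigr)+(I-P^r)y^{l^\ast}_t(t_{n^\ast}),
\]
using $f(y^{l^\ast}(t_{n^\ast}),u^{l^\ast}(t_{n^\ast}))=y^{l^\ast}_t(t_{n^\ast})$. Taking norms and applying the triangle inequality leaves two pieces. For the first, $P^r$ is an orthogonal projection, so $\|I-P^r\|\le1$ and the term is at most $\|f(y(s),u(s))-f(y^{l^\ast}(t_{n^\ast}),u^{l^\ast}(t_{n^\ast}))\|$, which by the choice of $l^\ast,n^\ast$ is the minimum in \eqref{apply_deri}. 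For the second, I would return to \eqref{eq:cota_pod_deriv}, drop the nonnegative mean contribution and keep only the index $j=n^\ast$ in the sum; this gives $\frac{\tau^2}{M+1}\|(I-P^r)y^{l^\ast}_t(t_{n^\ast})\|^2\le p\sum_{k=r+1}^d\lambda_k$, hence $\|(I-P^r)y^{l^\ast}_t(t_{n^\ast})\|\le\bigl(\frac{M+1}{\tau^2}p\sum_{k=r+1}^d\lambda_k\bigr)^{1/2}$. Combining the two pieces yields \eqref{apply_deri}.

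The step I expect to need the most care is this last one. Note first that \eqref{eq:cota_pod_deriv} only controls the derivative snapshots at $t_1,\dots,t_M$, whereas the minimization in \eqref{apply_deri} also allows $n=0$; to cover the case $n^\ast=0$ cleanly one should either restrict the minimization to $1\le n\le M$ or include $y^\nu_t(t_0)$ among the snapshots. Note also that the single-snapshot estimate naturally produces the square root of the eigenvalue tail, so the second summand in \eqref{apply_deri} is to be understood as $\bigl(\tfrac{M+1}{\tau^2}p\sum_{k=r+1}^d\lambda_k\bigr)^{1/2}$, exactly as in the companion bound of Lemma~\ref{lema5}. It is worth stressing why this argument is simpler than the one for $\|y-P^ry\|$: because $f(y(s),u(s))$ is itself a velocity, it is matched against the derivative snapshots directly, and no second-order term $\int_0^T\|y^{\nu}_{tt}\|^2\,ds$ enters, in contrast to Lemma~\ref{le:poin} and Lemma~\ref{lema5}.
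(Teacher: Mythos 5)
Your proof is essentially identical to the paper's: the authors also pick the minimizing pair of indices, split $(I-P^r)f(y(s),u(s))$ into the difference with the nearest snapshot velocity plus $(I-P^r)f(y^j(t_k),u^j(t_k))$, bound the first piece by $\|I-P^r\|\le 1$ and the second by \eqref{eq:cota_pod_deriv} using that $f(y^j(t_k),u^j(t_k))=y^j_t(t_k)$ is a derivative snapshot. The two caveats you raise are genuine and are not addressed by the paper's (equally terse) proof: the single-snapshot estimate extracted from \eqref{eq:cota_pod_deriv} yields $\bigl(\tfrac{M+1}{\tau^2}p\sum_{k=r+1}^d\lambda_k\bigr)^{1/2}$ rather than the unsquare-rooted quantity printed in \eqref{apply_deri}, and the derivative snapshots only cover $t_1,\dots,t_M$, so the case $n=0$ in the minimization needs the adjustment you describe.
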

\begin{proof}
We argue as in the proof of Lemma \ref{lema5}.
Let $k$ and $j$ be such that
$$
\|f(y(s),u(s))-f(y^j(t_k),u^j(t_k))\|=\min_{\genfrac{}{}{0pt}{2}{1\le l\le p}{0\le n\le M}}\|f(y(s),u(s))-f(y^l(t_n),u^l(t_n))\|.
$$
Then,
\begin{eqnarray*}
&&(I-P^r)f(y(s),u(s))=(I-P^r)(f(y(s),u(s))-f(y^j(t_k),u^j(t_k)))\\
&&\quad+(I-P^r)f(y^j(t_k),u^j(t_k)),
\end{eqnarray*}
so that applying \eqref{eq:cota_pod_deriv} and $\|I-P^r\|\le 1$ we get \eqref{apply_deri}.
\end{proof}
\begin{remark}
An error bound for $\|(I-P^r)f(y_r(s),u^r(s))\|$ can be obtain arguing exactly in the same way. 
\end{remark}
\bigskip

\begin{remark}As in \eqref{resto2}, the first term on the right-hand side of \eqref{apply_deri} reflects the closeness
of~$f(y(s),u(s))$ to the set of snapshots and the second one is the projection error onto the POD basis.
Let us observe
that the use of temporal derivates in the set of snapshots is essential to get the bound \eqref{apply_deri}. 

In case one has a uniform distribution along the discrete times of the errors in \eqref{eq:cota_pod_deriv}, as it is often the case
(at least in our experience with numerical computations concerning POD methods, see \cite[Figure 1]{Bosco_Volker_yo}), one would expect for the second term on the
right-hand side of \eqref{apply_deri} a behaviour as
$
\frac{p}{\tau^2}\sum_{k=r+1}^d\lambda_k
$
instead of the rude bound $\frac{(M+1)}{\tau^2}p\sum_{k=r+1}^d\lambda_k$.
\end{remark}
\textcolor{red}{
\begin{remark}
As in \cite{Alla_Falcone_Volkwein}, \cite{maceneay2}, \cite{Alla_et_all}, \cite{Javier_yo} we do not provide in this paper error bounds for the reduced control
whose values at the nodes are obtained solving \eqref{fully_discrete_pod}. Although this would be interesting, we are not aware
of similar error bounds in the literature. Actually, we think that the starting point should be getting those bounds for the controls
in the fully discrete scheme of the original (not reduced) method. We remark that the results in \cite{Javier_yo}, error bounds
of the fully discrete problem, in which
the theory of the present paper is based, are very recent. Actually, the bounds in \cite{Javier_yo} represent an improvement
in the error bounds of the fully discrete method over previous results obtained more than 25 years ago. 
In \cite[Section 1.2]{falcone11} the reconstruction of approximate optimal controls is considered comparing the
fully discrete case with the semi-discrete in time case. Getting bounds for the
computed controls of the fully discrete problem respect to the original problem could be an interesting subject of future research, for which at the moment we do not know if the techniques in \cite[Section 1.2]{falcone11} could be extended. To complete the discussion we include below a heuristic argument concerning the convergence of the approximate controls in the fully discrete case. For any node in the triangulation, 
$\hat y^i$, let us denote by $u_{h,k}^i$ the control giving the minimum in \eqref{fully_discrete}. On the other hand, let us denote by
$u^i$ the optimal control in the HJB equation \eqref{HJB} for $y=\hat y^i$. 
As stated in \cite[Section 1.2]{falcone11}, the controls could not be unique but one can select the control with minimum norm.
Now, let us observe that for the discrete value function it holds
\begin{equation}\label{discre1}
v_{h,k}(\hat y^i+hf(\hat y^i,u^i_{h,k}))=v_{h,k}(\hat y^i)+hf(y^i,u^i_{h,k})\cdot \nabla v_{h,k}(\hat y^i)+O(h^2).
\end{equation}
Taking into account that 
\begin{equation}
\label{discre2}
v_{h,k}(\hat y^i)=(1-\lambda h)v_{h,k}(\hat y^i+hf(\hat y^i,u^i_{h,k}))+hg(\hat y^i,u^i_{h,k}),
\end{equation}
and inserting \eqref{discre1} into \eqref{discre2} we get
\begin{eqnarray*}
v_{h,k}(\hat y^i)&=&(1-\lambda h)\left(v_{h,k}(\hat y^i)+hf(y^i,u^i_{h,k})\cdot \nabla v_{h,k}(\hat y^i)+O(h^2)\right)\\
&&\quad+hg(\hat y^i,u^i_{h,k}).
\end{eqnarray*}
And then
\begin{eqnarray*}
\lambda h v_{h,k}(\hat y^i)=hf(\hat y^i,u^i_{h,k})\cdot \nabla v_{h,k}(\hat y^i)+hg(\hat y^i,u^i_{h,k})+O(h^2).
\end{eqnarray*}
From which
\begin{eqnarray*}
\lambda v_{h,k}(\hat y^i)=f(\hat y^i,u^i_{h,k})\cdot \nabla v_{h,k}(\hat y^i)+g(\hat y^i,u^i_{h,k})+O(h).
\end{eqnarray*}
Now, since
\begin{equation}\label{ennodo}
\lambda v(\hat y^i)=f(\hat y^i,u^i)\cdot \nabla v(\hat y^i)+g(\hat y^i,u^i),
\end{equation}
and $v_{h,k}(\hat y^i)\rightarrow v(\hat y^i),$ for $h,k\rightarrow 0$
we obtain
\begin{equation}\label{alfi_si}
f(\hat y^i,u^i_{h,k})\cdot \nabla v_{h,k}(\hat y^i)+g(\hat y^i,u^i_{h,k})\rightarrow f(\hat y^i,u^i)\cdot \nabla v(\hat y^i)+g(\hat y^i,u^i).
\end{equation}
Arguing as in \cite[Section 1.2]{falcone11},
let us define
$$
L(y,u)=\frac{1}{\lambda}\left(f(y,u)\cdot\nabla v(y)+g(y,u)\right),
$$
and let us associate with $y$ a (unique) control $u(y)$ such that
$$
L(y,u(y))=\min_{u\in U_{\rm ad}}L(y,u)=v(y).
$$
Assume
\begin{equation}\label{asumi}
\nabla v_{h,k}(\hat y^i)\rightarrow \nabla v(\hat y^i)
\end{equation}
(which we have not proved) and $u_{h,k}^i\rightarrow \overline u^i$ for $h,k\rightarrow 0$.
Then, on the one hand, from \eqref{alfi_si},
$$
L(\hat y^i,u_{h,k}^i)\rightarrow L(\hat y^i,u^i),
$$
and, on the other
$$
L(\hat y^i,u_{h,k}^i)\rightarrow L(\hat y^i,\overline u^i)
$$
which implies $\overline u^i=u^i$ and $u_{h,k}^i\rightarrow u^i$ for $h,k\rightarrow 0$.
Finally, let us observe that the argument in \cite[Section 1.2]{falcone11} had already proved that 
for any fixed $h$ and $k\rightarrow 0$ the fully discrete controls $u_{h,k}^i$ converge to the corresponding semi-discrete
time control defined in \eqref{discrete_HJB}, for that value of $h$.
\end{remark}
}
\section{Numerical Experiments}

We now present some numerical experiments. We closely follow those in~\cite{Alla_Falcone_Volkwein} so that the new method we propose
can be compared with the method in ~\cite{Alla_Falcone_Volkwein}. \textcolor{red} {The authors in \cite{Alla_Falcone_Volkwein}
apply state snapshots in the reduced order method instead of snapshots based on time derivatives. We observe that, as explained in detail in the introduction, in the last case it is not necessary to consider both, state snapshots and time derivatives, since it has already been proved that only with time derivatives optimal bounds can be obtained. We observe that we have chosen the closed-loop
control type approach instead of the open-loop control type approach in the present paper. Also, the theory of the present paper develops the first approach. We do not compare the present method with methods based on the first approach since
our aim is just to propose, analyze and check in practice a new method that could be better or not (probably depending
on the examples) than other methods
% existing
 in the
literature. The numerical experiments of this section show that our method works fine in practice and is able to provide accurate
approximations.}

We first notice
that due to numerical reasons we have to choose a finite time horizon, so we select a sufficiently large $t_e>0$, which, in the experiments that follow, it was fixed to~$t_e=3$. As in~\cite{Alla_Falcone_Volkwein}, we consider the following convection-reaction-diffusion equation
\begin{equation}
\label{crd}
\begin{array}{rcll}
z_t-\varepsilon z_{xx} + \gamma z_x + \mu(z^3-z) & =& ub\quad &\hbox{\rm in $I\times (0,t_e)$},\\
z(\cdot,0)&=&z_0\quad &\hbox{\rm in $I$},\\
z(\cdot,t) &=&0\quad&\hbox{\rm in~$\partial I\times (0,t_e)$},
\end{array}
\end{equation}
with $\varepsilon=1/10$, and
where $I=(0,a)$ is an open interval, $z: I\times[0,t_e]\rightarrow {\mathbb R}$ denotes the state,  and $\gamma $ and~$\mu$ are positive constants. The controls~$u$ belong to the closed, convex, bounded set~${\mathbb U}_{\rm ad} = L^2(0,t_e,[u_a,u_b])$, for real values $u_a<u_b$. The cost functional to minimize is given by
\begin{equation}
\label{cost_f}
\int_{0}^{t_e} e^{-\lambda t}\left(\left\| z(\cdot,t,u)\right\|_{L^2(I)}^2 + \frac{1}{100}\left|u(t)\right|^2\right)\,dt,
\end{equation}
where we set $\lambda=1$. Notice then that in~\eqref{cost_f} the aim is to drive the state to~zero. 

We use a finite-difference method on a uniform grid of size~$\Delta x=l/N$  with $N=100$ on the interval $I=(0,l)$ to discretize~\eqref{crd} in space to obtain a system of ordinary differential equations (ODEs). To obtain the snapshots, the ODE system is integrated in time using
Matlab's command ode15s, which uses the numeric differentiation formulae (NDF)~\cite{NDF}, with sufficiently small tolerances for the local errors (below $10^{-12}$). The snapshots were obtained on a uniform (time) grid of diameter $1/20$. The time derivatives were obtained by evaluating the right-hand-side of the system of ODEs.

As in~\cite{Alla_Falcone_Volkwein}, equation~\eqref{fully_discrete_pod} 
$$
v_{h,k}^r(y^{i}_r)=\min_{u\in {U}_{\rm ad}}\left\{(1-\lambda h)v_{h,k}^r(y^{i}_r+hf^r(y^{i}_r,u))+hg^r(y^{i}_r,u)\right\},\ i=1,\ldots,n_s,
$$
was solved by fixed-point iteration, the stopping criterium being that two consecutive iterates differ in the maximum norm in less \textcolor{red}{than a given tolerance ${\rm TOL}_v$, initially set to}~$\textcolor{red}{{\rm TOL}_v={}}5\times 10^{-4}$. For the first iterate we choose
a family of constant controls $u^l$, $l=1,\ldots,p$ and at any point of the mesh $y_r^i$ (initial condition) we compute the approximate solution of \eqref{din_sis_pos} corresponding to this initial condition and control $u^l$. Then, we compute the value of the functional cost \eqref{fun_cos_pod}. Finally, the value of the initial iterate at $y_r^i$ is the minimum between the values of the functional cost for $l=1,\ldots,p$.

Once~\eqref{fully_discrete_pod} is solved, the optimal control
\begin{equation}
\label{suboptimal}
u_{h,k}^r(y^{i}_r)=\argmin_{u\in {U}_{\rm ad}}\left\{(1-\lambda h)v_{h,k}^r(y^{i}_r+hf^r(y^{i}_r,u))+hg^r(y^{i}_r,u)\right\},
\end{equation}
is obtained at any  mesh point $y^i_r$, $i=1,\ldots,n_s$. Then, the suboptimal feedback operator~$\Phi^r(y)$ is computed by interpolation. This means that for
$y\in \overline \Omega$ we project onto the POD space to get $P^r y$ and then 
$$
P^r y=\sum_{i=1}^{n_s}\mu_i y^i_r,\quad \Phi^r(y)=\sum_{i=1}^{n_s}\mu_i u_{h,k}^r(y^{i}_r),
$$
where the coefficients $\mu_i$ satisfy $0\le \mu_i\le 1$, $\sum_{i=1}^{n_s} \mu_i=1$.

With this, the closed-loop system
\begin{equation}
\label{closed_loop}
y'(t)=f(y(t),\Phi^r(y(t))), \qquad y(0)=y_0,
\end{equation}
is integrated, again, using the NDF formulae as implemented in Matlab's command ode15s with the same tolerances as in the computation of the snapshots. \textcolor{red}{We will see below that very different approximations to the solution of~\eqref{closed_loop} can be obtained with different values of the tolerance~${\rm TOL}_v$ for the fixed-point iteration solving~\eqref{fully_discrete_pod} (see Fig.~\ref{fig_TOLv}), so that we solved this equation for decreasing values of~${\rm TOL}_v$, each one 5 times smaller than the previous one until the relative error between the solutions of~\eqref{closed_loop} corresponding to two consecutive values of~${\rm TOL}_v$ was below~10\% (it usually turned out to drop dramatically from above $10\%$  to less than~0.01\%) . Here and in the sequel, by the relative error of a quantity~$\hat y$ with respect to~$y$ we mean $\left| y-\hat y\right| /\max(\left|y\right|,10^{-3})$. \textcolor{red}{For the optimal HJB states, for every value of time~$t$ for which the solution of~\eqref{closed_loop} was computed, we computed the maximum or the relative errors of the components of~$y(t)$. For~Test~2 in~Section~\ref{Test2}, due to the discontinous initial datum, it proved impossible to drive the relative error of two optimal HJB states computed with two different tolerances~${\rm TOL}_v$ below~10\%, so that we checked that the value of the relative errors measured in the norm~\eqref{norma} below was smaller than 10\%}.}

\textcolor{red}{With respect to the computational cost of solving~\eqref{cost_f} by fixed-point iteration, it is obviously proportional to the number of iterations, which, in the experiments below, ranges from as few as 131 for~$r=4$ in Test 2 in~Section~\ref{Test2} below, to as many as 1322 for $r=5$ in Test 3 in Section~\ref{Test3} below. On each iteration, the bulk of the cost is finding the nonnegative scalars $\mu_j^i$, $j=1,\ldots,n_s$, such that $y^{i}_r+hf^r(y^{i}_r,u)= \mu_1^i y_r^1 +\cdots+  \mu_{n_s}^i y_r^{n_s}$, which ranges from 70\% of the cost of the iteration for~$r=5$ in~Test 3 in~Section~\ref{Test3} to 95\% for $r=4$ in~Test~2 in~Section~\ref{Test2}, followed by the cost of obtaining~$f^r(y^{i}_r,u)$, $i=1,\ldots,{n_s}$, which ranges from
$4\%$ for  $r=4$ in~Test~2 in~Section~\ref{Test2} to~28\% for $r=5$ in~Test 3 in~Section~\ref{Test3}. We note that cost of obtaining~$f^r(y^{i}_r,u)$ can be substantially diminished using appropriate tensors or by means of techniques like discrete empirical inerpolation, which, for simplicity, we did not use in our codes.
}
\subsection{Test 1: Semilinear equation}
\label{Test1}

As in~\cite{Alla_Falcone_Volkwein}, we consider~\eqref{crd} with $\gamma=0$ and~$\mu=1$, $a=1$ and $b(x)=z_0(x)=2x(1-x)$. It is easy to check that the uncontrolled solution converges, as $t\rightarrow \infty$ to a non-null steady state (see also~\cite[Fig.~6.1]{Alla_Falcone_Volkwein}), and that the null solution is unstable.

For the finite-difference approximation, we consider $y:[0,t_e]\rightarrow {\mathbb R}^{N-1}$ with components $y_j(t)\approx z(x_j,t)$, $x_j=j\Delta x$, $j=1,\ldots,N-1$, \textcolor{red}{$\Delta x=1/N$,} solution of
\begin{equation}
\label{fd_rd}
Cy_t=\frac{1}{10}Ay + C(F(y) + uB)
\end{equation}
where the components of~$F$ and $B$ are, respectively $F_j = y_j(1-y_j^2)$,  $B_j=2 x_j(1-x_j)$, $j=1,\ldots,N-1$, and $A$ and~$C$  are $(N-1)\times (N-1)$ tridiagonal matrices matrices given by
\begin{equation}
\label{matricesAyC}
%A=\frac{1}{(\Delta x)^2}\left[\begin{array} {rrrrrrrr} -2 &1\\ 1& -2& 1\\ &\ddots &\ddots&\ddots\\ && 1& -2 &1 \\ &&& 1 &-2\end{array}\right]
A=\frac{1}{(\Delta x)^2}\left[\begin{array} {cccccc} -2 &\ 1\\ \ 1& -2& \ 1\\ &\ddots &\ddots&\ddots\\ && \ 1& -2 &\ 1 \\ &&& \ 1 &-2\end{array}\right],
\qquad 
C=\frac{1}{12}\left[\begin{array} {cccccc} 10 &1\\ 1& 10& 1\\ &\ddots &\ddots&\ddots\\ &&1& 10 &1 \\ &&& 1 &10\end{array}\right],
\end{equation}
so that the finite-difference discretization~\eqref{fd_rd} is fourth-order convergent. The norm we consider in~${\mathbb R}^{N-1}$ is given by
\begin{equation}
\label{norma}
\left\| y\right\|^2 = \Delta x \sum_{j=1}^{N-1} y_j^2.
\end{equation}
Let as observe that this norm is an approximation to the integral $\int_0^1y(x)^2 dx$ of a function with values $y_j$ at the spatial mesh nodes. 

To compute the snapshots, as in~\cite{Alla_Falcone_Volkwein}, for constant controls $u\in U_{\rm snap}=\{-1,0,1\}$, we obtained the solutions $y^{(n)} =y(t_n)$ of~\eqref{fd_rd} every~$1/20$ time units, that is, for $t_n=n/20$, $n=1,\ldots 20t_e$, and then the time derivatives $y_t^{(n)}$ were computed from identity~\eqref{fd_rd}. For the reduced spaces, we consider the cases of POD basis with only $r=2,3$ and $4$ elements, also
as in~\cite{Alla_Falcone_Volkwein}. \textcolor{red}{The POD approximation~$y^r$ was then the mean of the snapshots plus a linear combination of the POD basis}. The control set ${\Bbb U}_{\rm ad}$ is given by \textcolor{red}{41} controls equally distributed in $[-1,1]$.

As in \cite{Alla_Falcone_Volkwein}, to define the domain $\overline \Omega^r$, we compute the projections of all
the snapshots. With this procedure we obtain a set of points in $\Bbb R^r$. Then, we define an hypercube containing this set of points. The aim of this procedure, in view of Lemma \ref{lema3}, is that the 
set $\overline \Omega^r$ defined in this way satisfies the invariance condition
\begin{equation}\label{new_inv}
y^r+hf^r(y^r,u)\in \overline \Omega^r, \quad y^r\in \overline\Omega^r, \quad u\in {\Bbb U}_{\rm ad}.
\end{equation}
The set~$\overline \Omega^r$ for $r=4$ was given by
%$$\overline \Omega^r =[-0.9,0.425]\times[-0.075,0.07]\times[-0.04,0.04]\times [-0.025,0.25].$$
\textcolor{red}{$$\overline \Omega^r =[-0.87,0.41]\times[-0.01,0.02]\times[-0.01,0.01]\times [-0.01,0.01].$$}
For this set we checked that condition \eqref{new_inv} holds.

We notice that our set $\overline \Omega^r$ is considerable smaller than the corresponding set in~\cite{Alla_Falcone_Volkwein} (see \cite[6.1.\  Test\ 1]{Alla_Falcone_Volkwein}) where the authors use the standard euclidean norm in~${\mathbb R}^{n}$ rather than the norm~\eqref{norma} we use here.
Since the domain is smaller we also consider partitions of~$\Omega^r$ smaller than those in \cite{Alla_Falcone_Volkwein}. We take maximum diameter
%$k_r=0.02$ and~
$k_r=0.01$, and, as in~\cite{Alla_Falcone_Volkwein}, we choose $h=0.1k_r$.

In Fig.~\ref{rd6} we have represented on top the optimal solution (left) for $r=4$, the difference between optimal solution with 4 and 2 POD basis functions (top middle) and the difference between optimal solution with 4 and 3 POD basis functions (top right). On bottom we have represented the optimal controls for $r=2$, $3$ and $4$.
\begin{figure}[h]
\begin{center}
\includegraphics[height=3.9truecm]{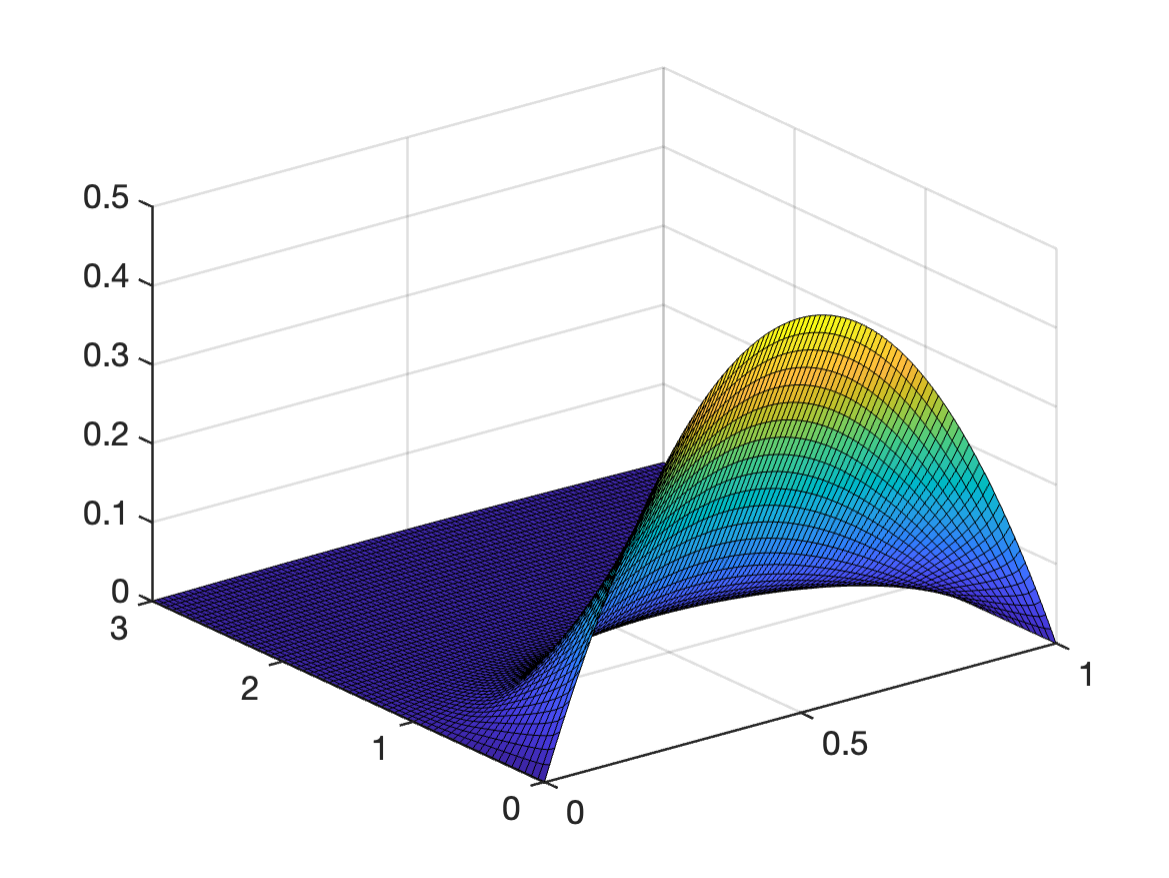}
\includegraphics[height=3.9truecm]{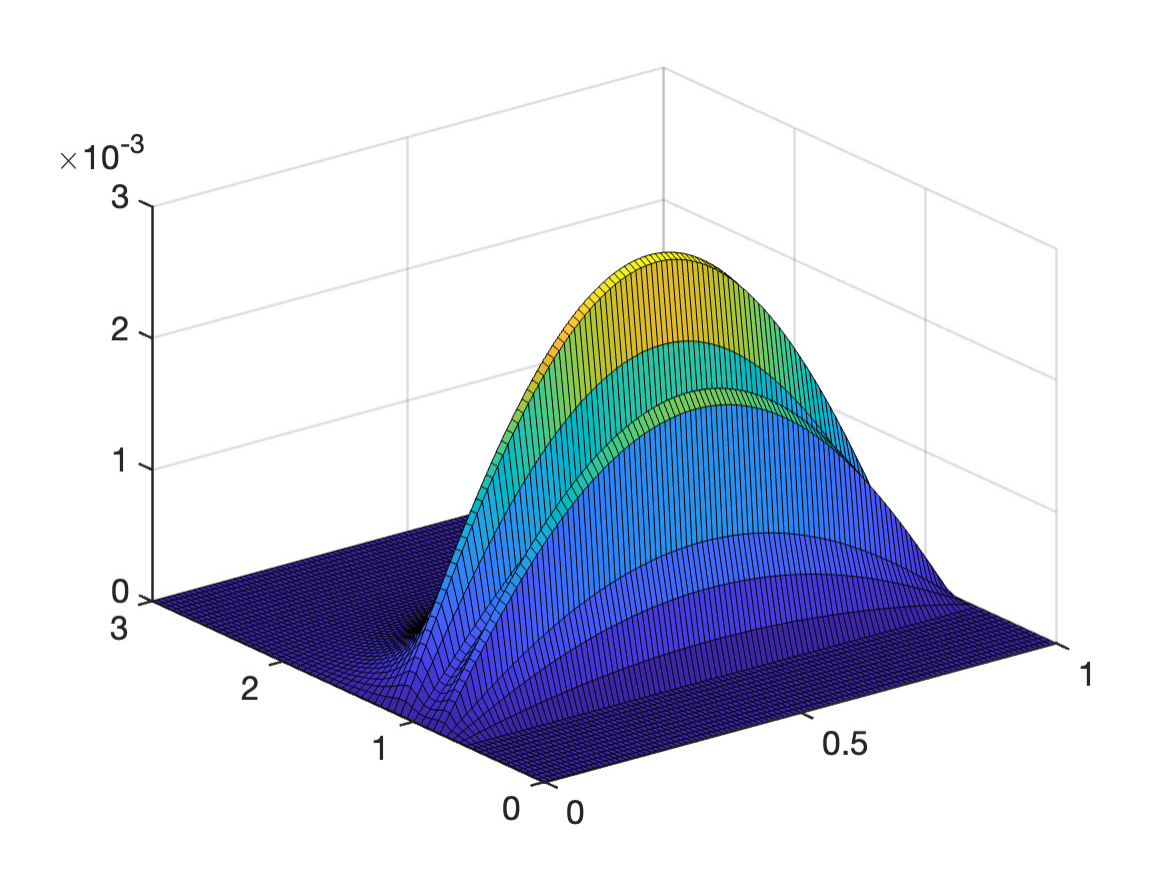}
\includegraphics[height=3.9truecm]{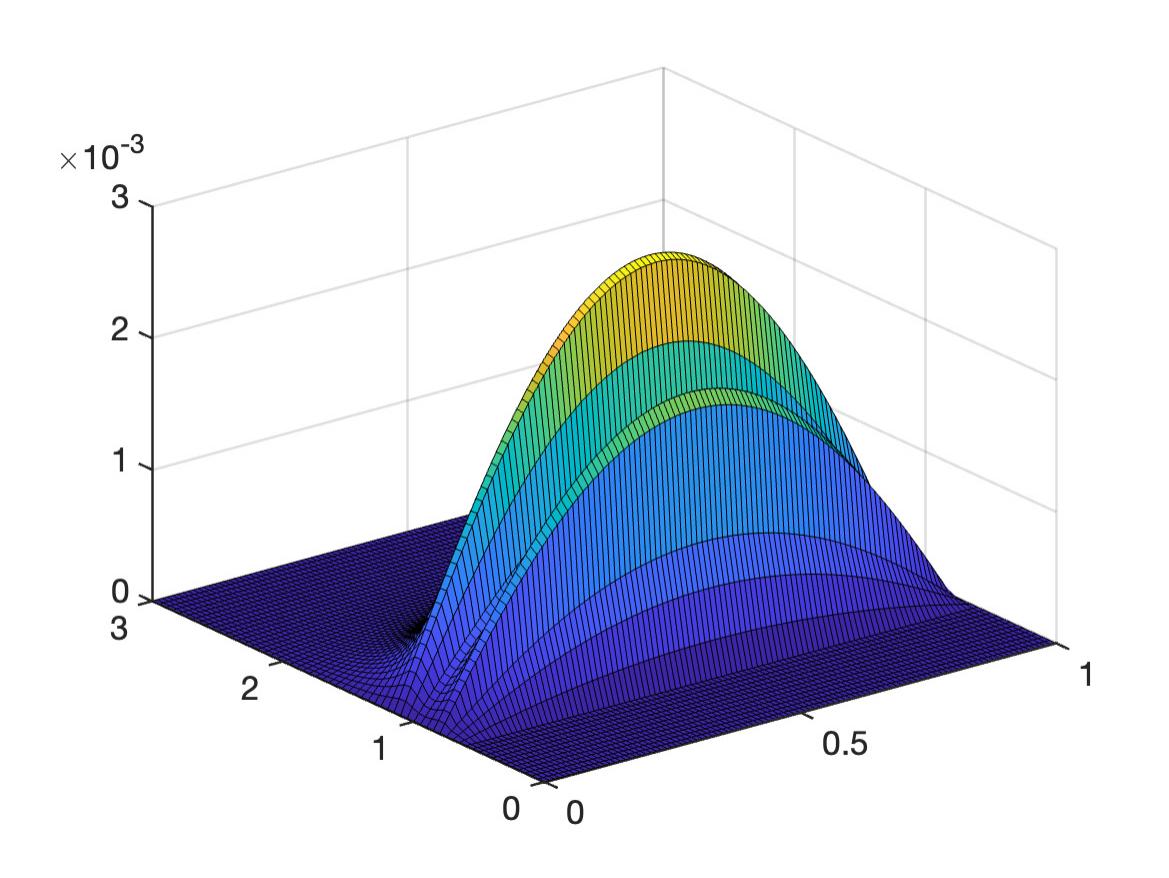}

\includegraphics[height=4truecm]{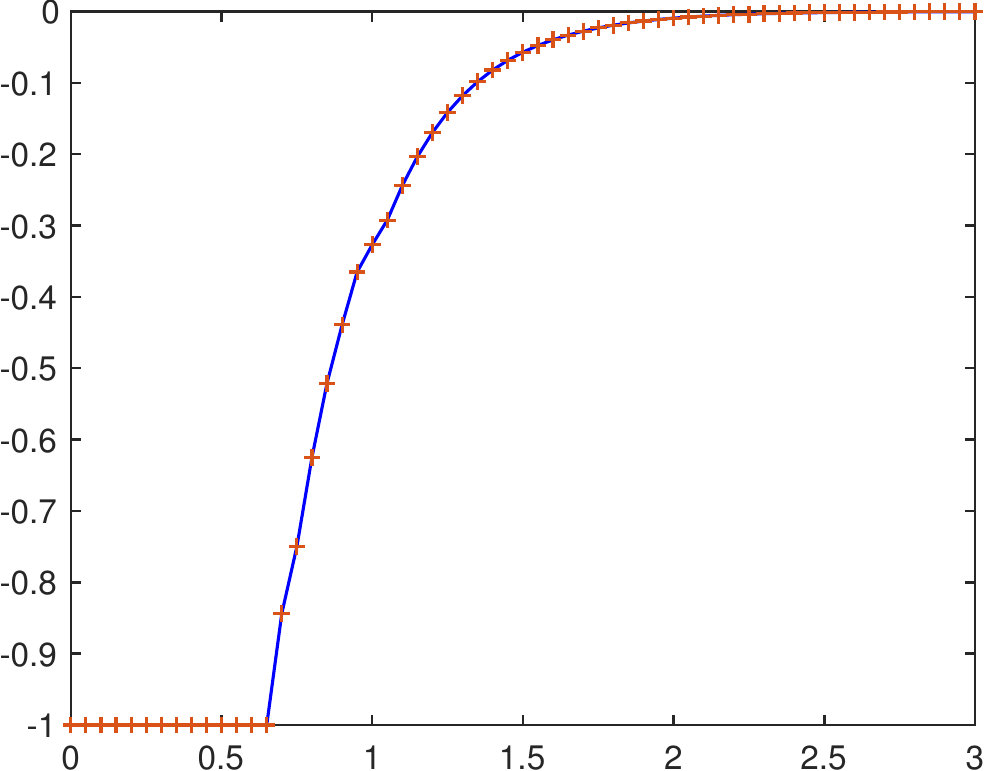}
\includegraphics[height=4truecm]{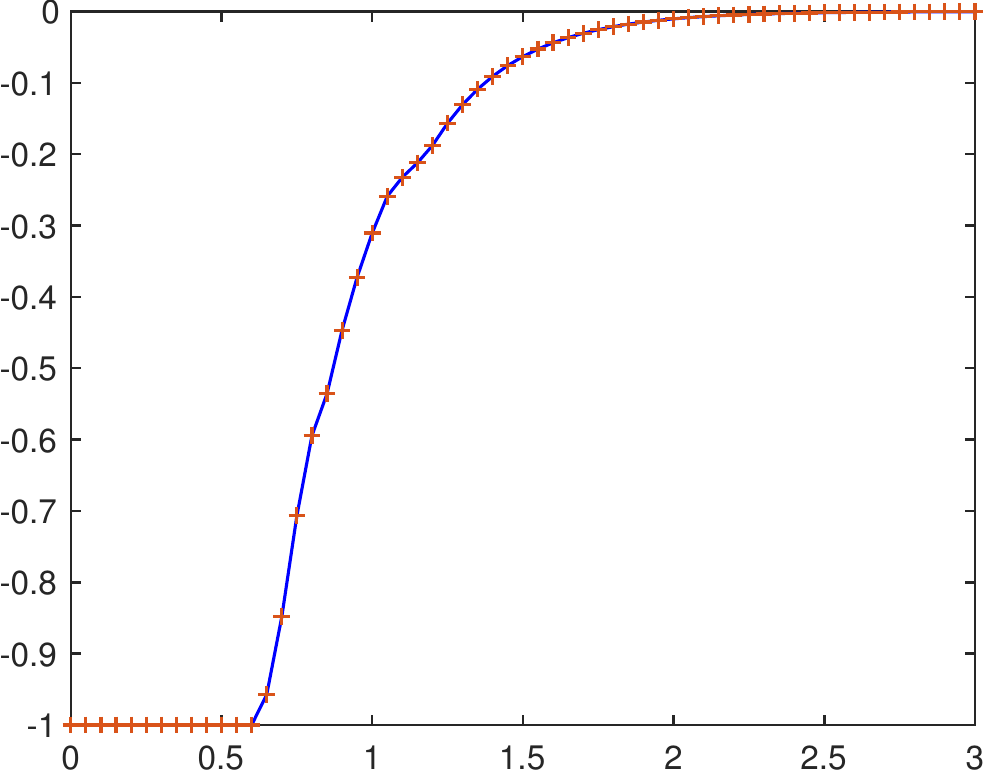}
\includegraphics[height=4truecm]{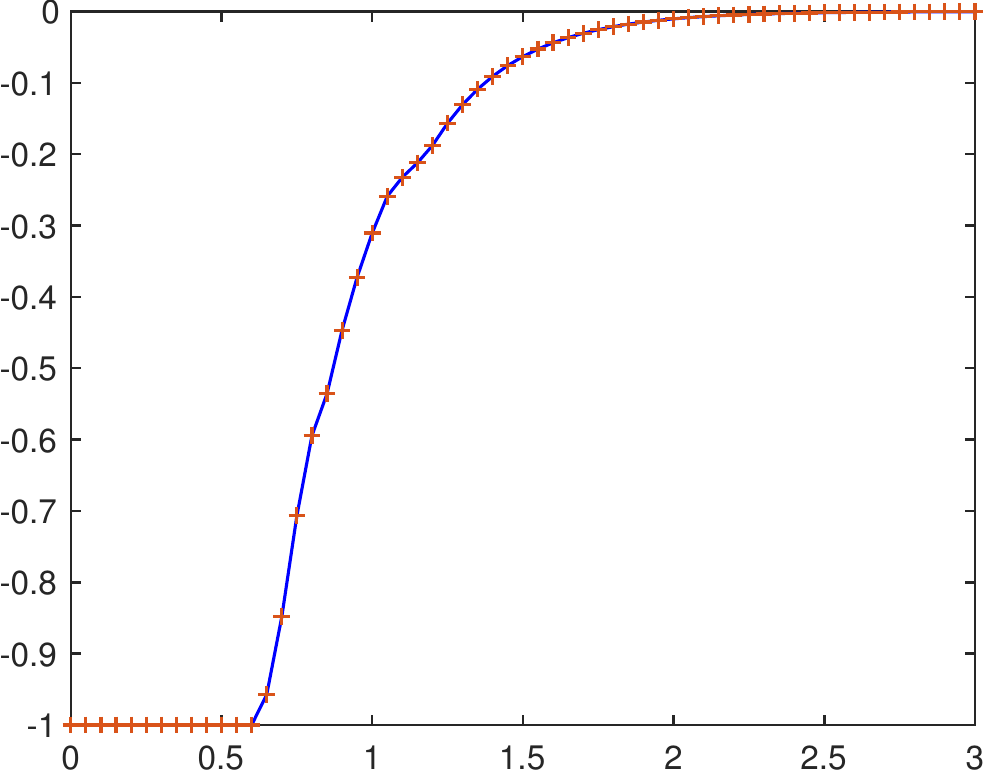}

\caption{\label{rd6} Test 1: Optimal HJB states computed with r=4 POD basis functions (top-left), difference between optimal solution with 4 and 2 POD basis functions (top-middle), difference between optimal solution with 4 and 3 POD bases (top-right). Optimal HJB controls with $r=4,3,2$ (bottom). \textcolor{red}{The red crosses correspond to the values of the controls that we have
joined by a blue line.}}
\end{center}
\end{figure}

We observe that we get much better results than those in \cite{Alla_Falcone_Volkwein}, although (apart from using a different set of snapshots)  in our method, both the finite-difference method and the time integrator that we use are more accurate than those in~\cite{Alla_Falcone_Volkwein}.
\textcolor{red}{We also notice that there is little discrepancy between the values of
the optimal HJB states for the different values of $r$ that we tried. We also computed the values of the cost functional~\eqref{cost_f} on the optimal HJB states for the three values of $r$. The values are shown in Fig.~\ref{rd_fig_cost_value}. It can be seen that the values decrease with~$r$ and that they differ in the ninth significant digit.
}
\begin{figure}[h]
\begin{center}
\includegraphics[height=2.5truecm]{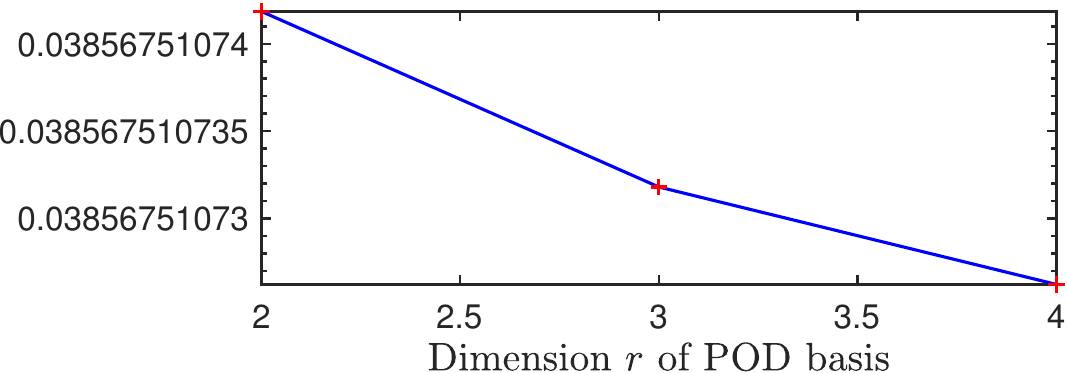}

\textcolor{red}{\caption{\label{rd_fig_cost_value} Test 1:  
Value of the cost functional~\eqref{cost_f} on the optimal HJB states for $r=2,3,4$. The red crosses correspond to the values of the cost values that are joined by a pdf line.
}}
\end{center}
\end{figure}

\textcolor{red}{As mentioned above there can be a significant difference between the optimal HJB states computed with solutions obtained by solving~\eqref{fully_discrete_pod} with different tolerances ${\rm TOL}_v$. In Fig.~\ref{fig_TOLv} we show the relative error between the optimal HJB states corresponding to tolerances~${\rm TOL}_v=5\times 10^{-4}$ and~${\rm TOL}_v=1\times 10^{-4}$ (left), and between this one and that corresponding to~${\rm TOL}_v=2\times 10^{-5}$ (centre). The right plot shows the corresponding optimal HJB controls. Fig.~\ref{fig_TOLv} shows the importance of solving~\eqref{fully_discrete_pod} accurately in order to obtain good optimal HJB states, thus, justifying that we computed the (approximations to the) solution of~\eqref{fully_discrete_pod} with decreasing values of~${\rm TOL}_v$ until the relative error of the corresponding optimal~HJB estates was below 10\%.
}
\begin{figure}[h]
\begin{center}
\includegraphics[height=3.9truecm]{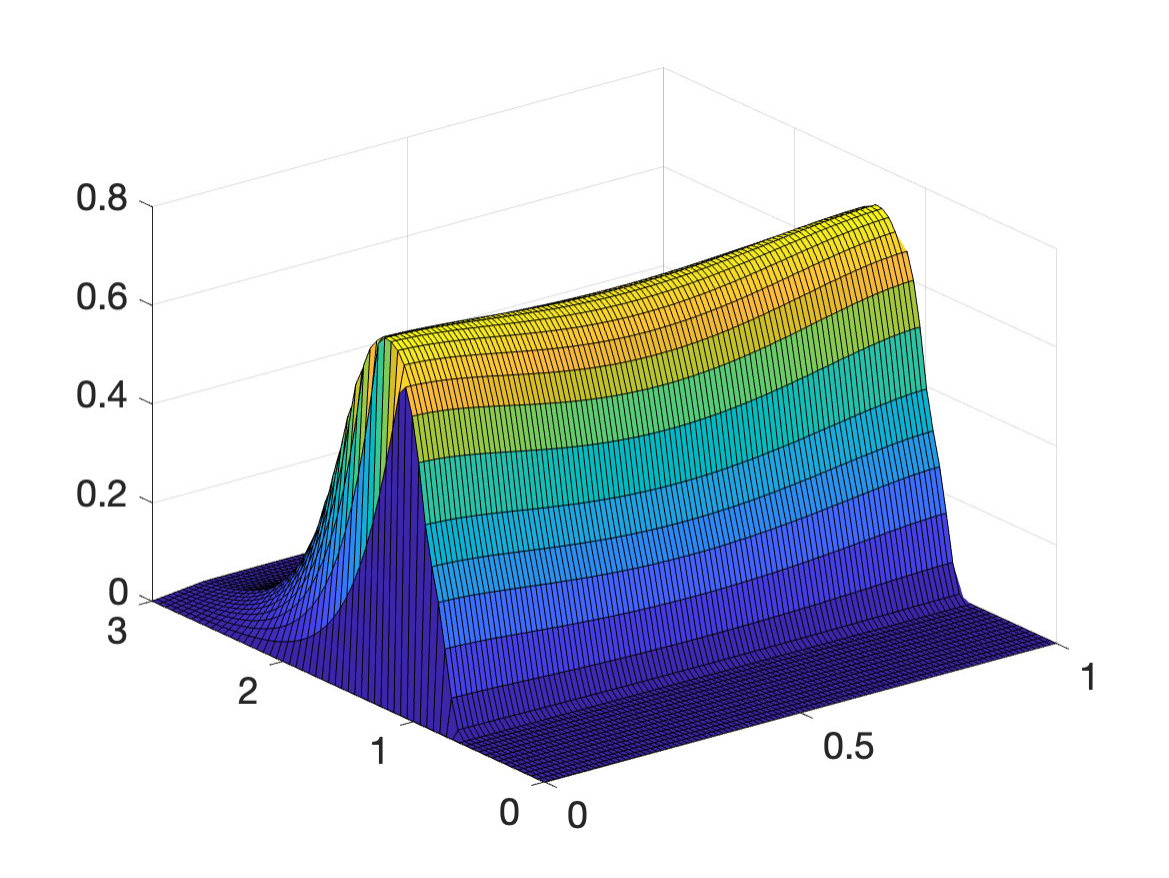}
\includegraphics[height=3.9truecm]{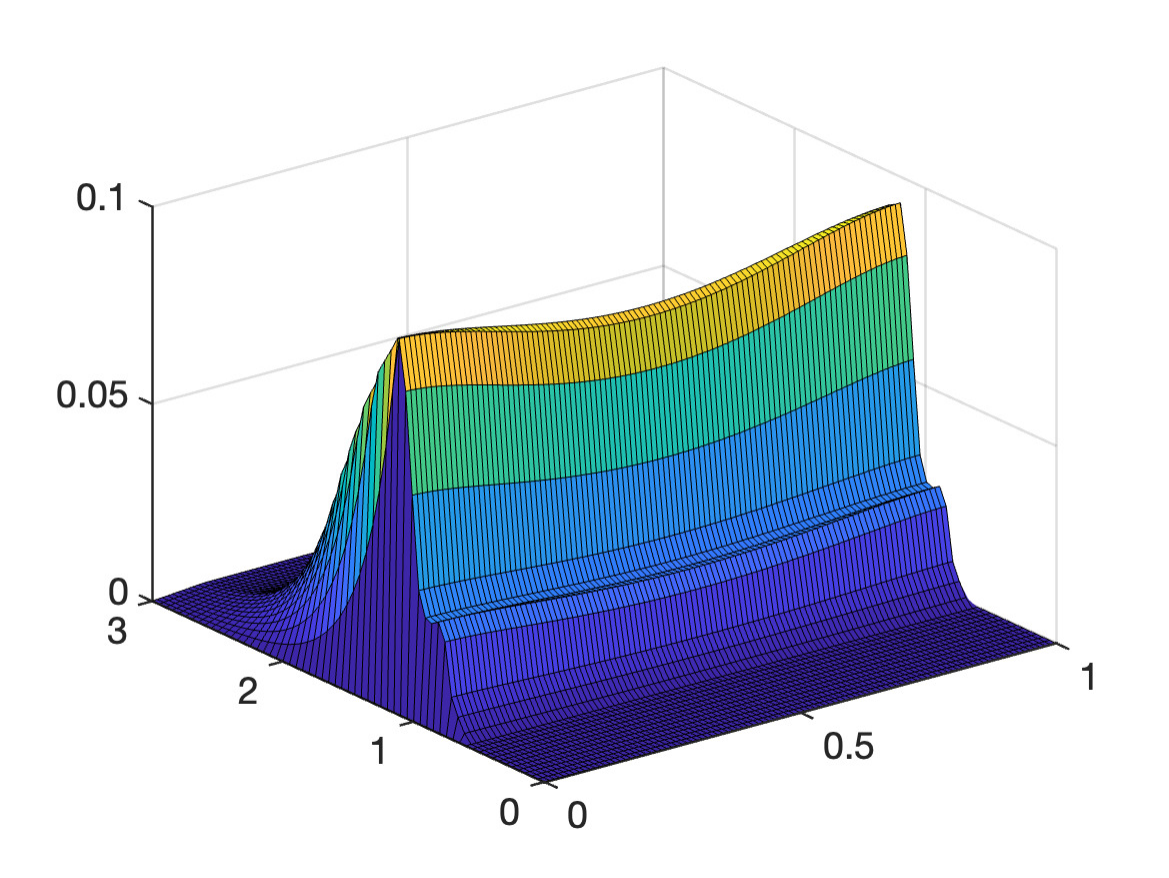}
\includegraphics[height=3.4truecm]{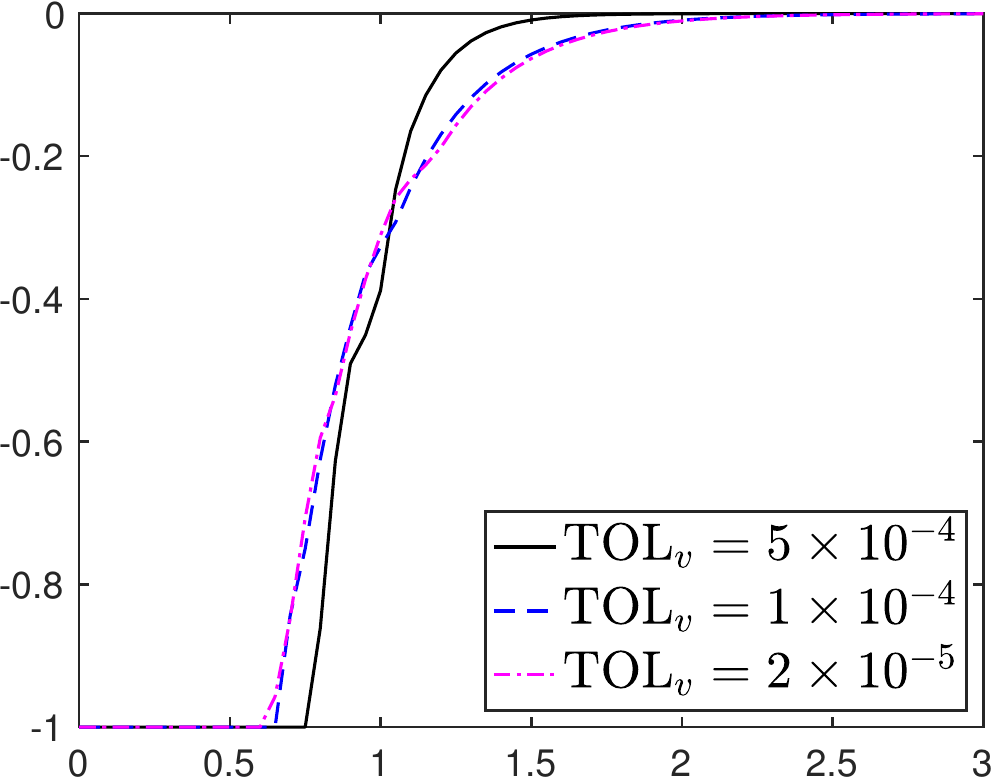}

\textcolor{red}{\caption{\label{fig_TOLv} Test 1:  Relative error between the optimal HJB states with $r=4$ corresponding to solving~\eqref{fully_discrete_pod} by fixed point iteration with tolerances~${\rm TOL}_v=5\times 10^{-4}$ and~${\rm TOL}_v=1\times 10^{-4}$ (left),  ${\rm TOL}_v=1\times 10^{-4}$ and~${\rm TOL}_v=2\times 10^{-5}$ (centre), and optimal HJB controls (right).}}
\end{center}
\end{figure}

\textcolor{red}{The results above suggest that, for this problem, it is enough with $r=3$. For this value of~$r$
we now check the effect of the finite-difference mesh in the optimal HJB states. In Table~\ref{table1} we show the relative errors of the optimal HJB states computed with $N=25$ and~$N=50$ with respect to that computed with $N=100$, as well as the relative errors of the corresponding controls. {For the controls, we show in~Table~\ref{table1} the maximum for all values of~$t\in\{0,0.05,0.1,\ldots,3\}$ of the relative errors, and for the states we show the maximum on the same values of~$t$ of the maximum
of the relative errors of the state on all the values of the corresponding spatial grid.} They confirm that the finite-difference discretization is of order~4. Due to the excellent accuracy obtained with $\Delta x=1/50$, the results that follow are done with that value of~$\Delta x$.
}
\begin{table}[h]
$$
\textcolor{red}{
\begin{array}{|c|c|c|c|c|}
\hline N & y&\hbox{\rm rate} &\Phi^r(y) &\hbox{\rm rate} \\ \hline
           25 & 7.24\times 10^{-5} & &  1.99\times 10^{-5}& \\ \hline
           50 & 4.25\times 10^{-6} & 4.09&  1.17\times 10^{-6} & 4.09\\ \hline
\end{array}}
$$
\textcolor{red}{\caption{\label{table1} Relative errors of the optimal HJB states and controls for $r=3$ computed with~$\Delta x=1/N$,  $N=25$ and~$N=50$, with respect to those computed with $N=100$.}}
\end{table}

\begin{figure}[h]
\begin{center}
\includegraphics[height=4truecm]{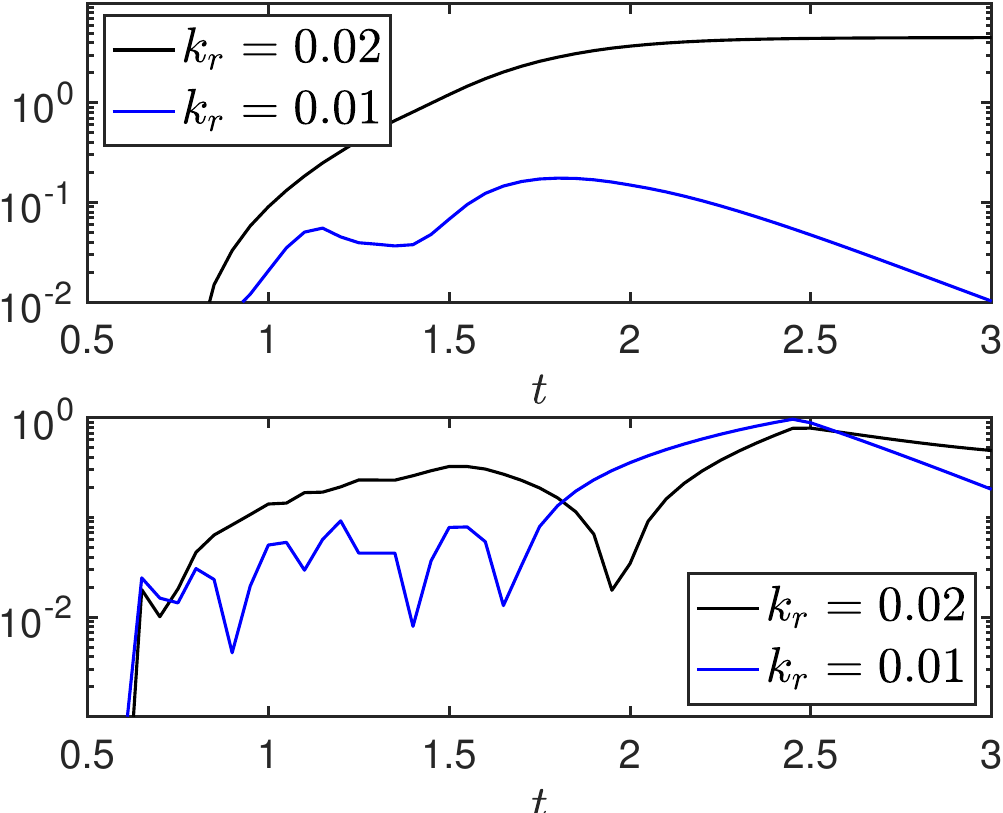}
\includegraphics[height=4truecm]{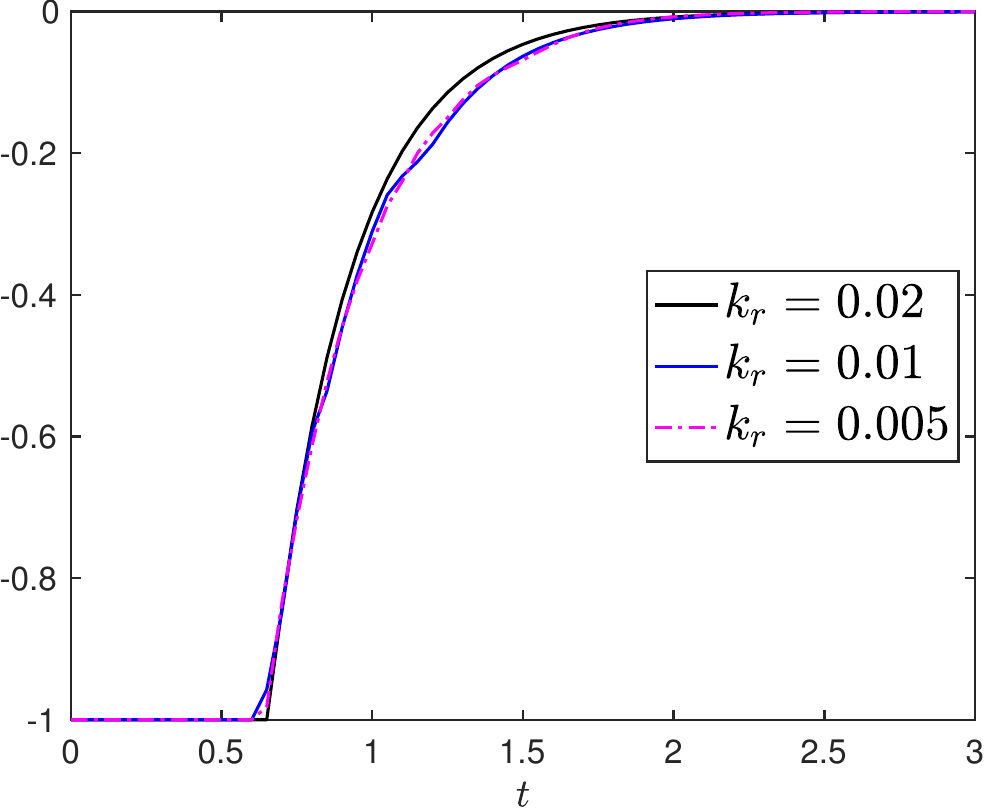}
\includegraphics[height=4truecm]{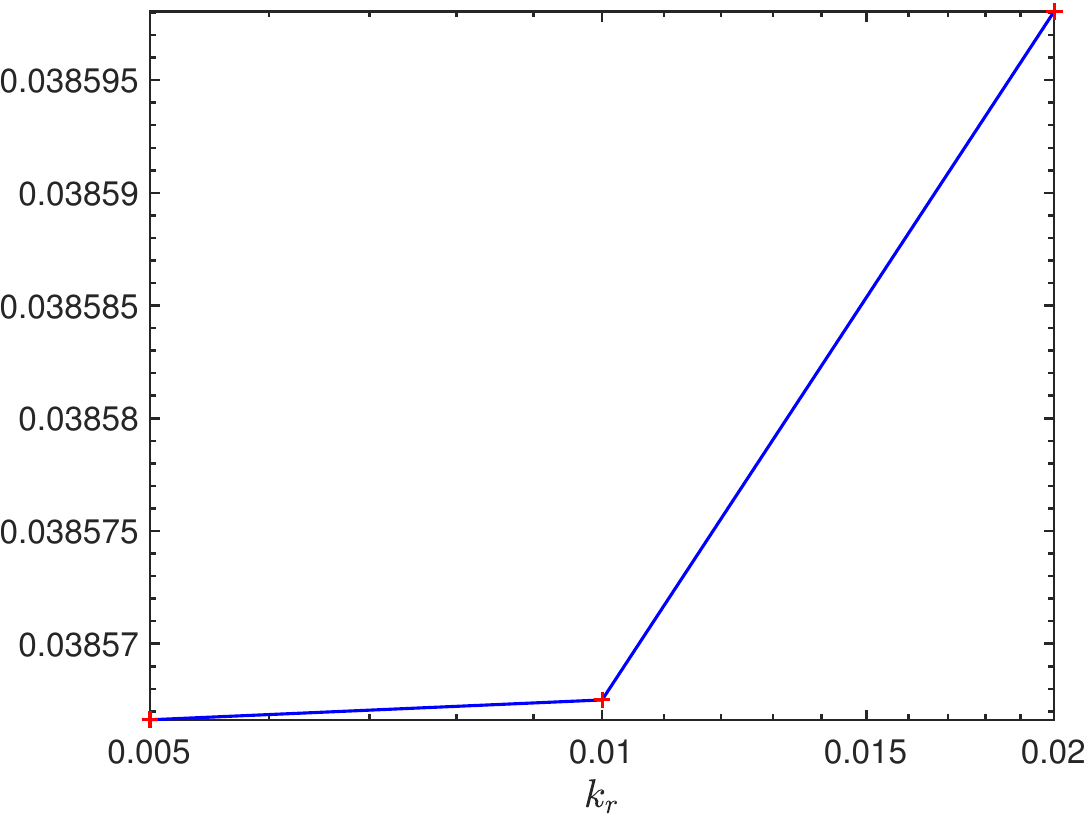}

\textcolor{red}{\caption{\label{fig_K} Test 1:  Results for different values of~$k_r$; relative errors between HJB states (top left) and controls (bottom left) with respect to to $k_r=0.005$;  HBJ controls (centre) and values of the cost functional~\eqref{cost_f} (right).}}
\end{center}
\end{figure}

\textcolor{red}{We now check the effect of different values $k_r$ of the diameter of the partition of~$\Omega^r$. To do this, we compare the results obtained with~$r=3$, $\Delta x=1/50$ and~$k_r=0.02, 0.01, 0.005$. In order not to spoil the better accuracy obtained with the smaller value of $k_r$ we took
${\Bbb U}_{\rm ad}$ with 161~controls equally distributed in $[-1,1]$ for~$k_r=0.005$, and, to simplify computations~with only 11 controls for~$k_r=0.02$ (we also try with 21 and 41 controls, but, although we do not have at present an explanation for it, using only 11 controls with $k_r=0.02$ gave somewhat better results). The results can be seen in~Fig.~\ref{fig_K} . The plots on the left show the {(maximum of the 51 points of the spatial grid of the)} relative errors of the optimal HJB states (top) and their controls (bottom) for~$k_r=0.02$ and~$k_r=0.01$ with respect to those of $k_r=0.005$, while the plot in the centre shows the HJB controls. The errors, as expected, are smaller for $k_r=0.01$ than for $k_r=0.02$, except for the controls for~$t\in[1.8,2.55]$ where they are slightly larger. We also notice that, for $k_r=0.01$, the relative errors remain below 10\% except for $t\in [1.5.2.2]$ (where they remain below 18\%) in the case of the errors in the optimal HJB states and $t\in[1.8,3]$ for the controls. Notice, however, that the largest errors take place where both the states and the controls are close to zero (recall the plots in Fig.~\ref{rd6}), were it is difficult to obtain small relative errors. Maybe this is the reason for the similar values of the cost functional~\eqref{cost_f} for the three values of~$k_r$, which are shown on the right plot in~Fig.~\ref{fig_K}; the relative errors
(with respect to~$k_r=0.005$)
 for~$k_r=0.02$ and~$k_r=0.01$ are 0.081\% and~0.0023\%, respectively.}

\textcolor{red}{One may wonder what is the result if the snapshots are used to obtain the POD basis as in~\cite{Alla_Falcone_Volkwein} instead of the time derivatives as in the present paper. Thus, we repeated our computations but replacing the time derivatives by the snapshots minus their mean. We did not find any significant difference. In Fig.~\ref{rd_u_ut} we show the results corresponding to~$\Delta x =1/50$, $r=3$, the set $\Omega^r$ being
$$
\Omega^r = (-0.42,0.9)\times (-0.01, 0.02)\times (-0.01,0.01).
$$
The optimal HJB control is shown on the right-plot, while the other two plots show the relative errors of the HJB state (left) and its control (centre) with respect to the results when the POD basis is taken from the time derivatives, $r=3$ and~$\Delta x=1/100$. We see that the relative errors are below 0.1\%, and thus, no difference can be seen between the right-plot in~Fig.~\ref{rd_u_ut} and the centre plot in~Fig.~\ref{rd6}. We also notice that our results when the POD basis is taken from the snapshots are better than those in~\cite{Alla_Falcone_Volkwein}. We believe that this is due to the higher accuracy of our computations (fourth-order convergent finite-difference method instead of a second-order convergent one, NDF with small tolerances to compute the snapshots instead of implicit Euler method, denser sets ${\Bbb U}_{\rm ad}$ for the control variable, smaller tolerance ${\rm TOL}_v$ in the fixed point method to solve~\eqref{fully_discrete_pod}, etc).
}

\begin{figure}[h]
\begin{center}
\includegraphics[height=4truecm]{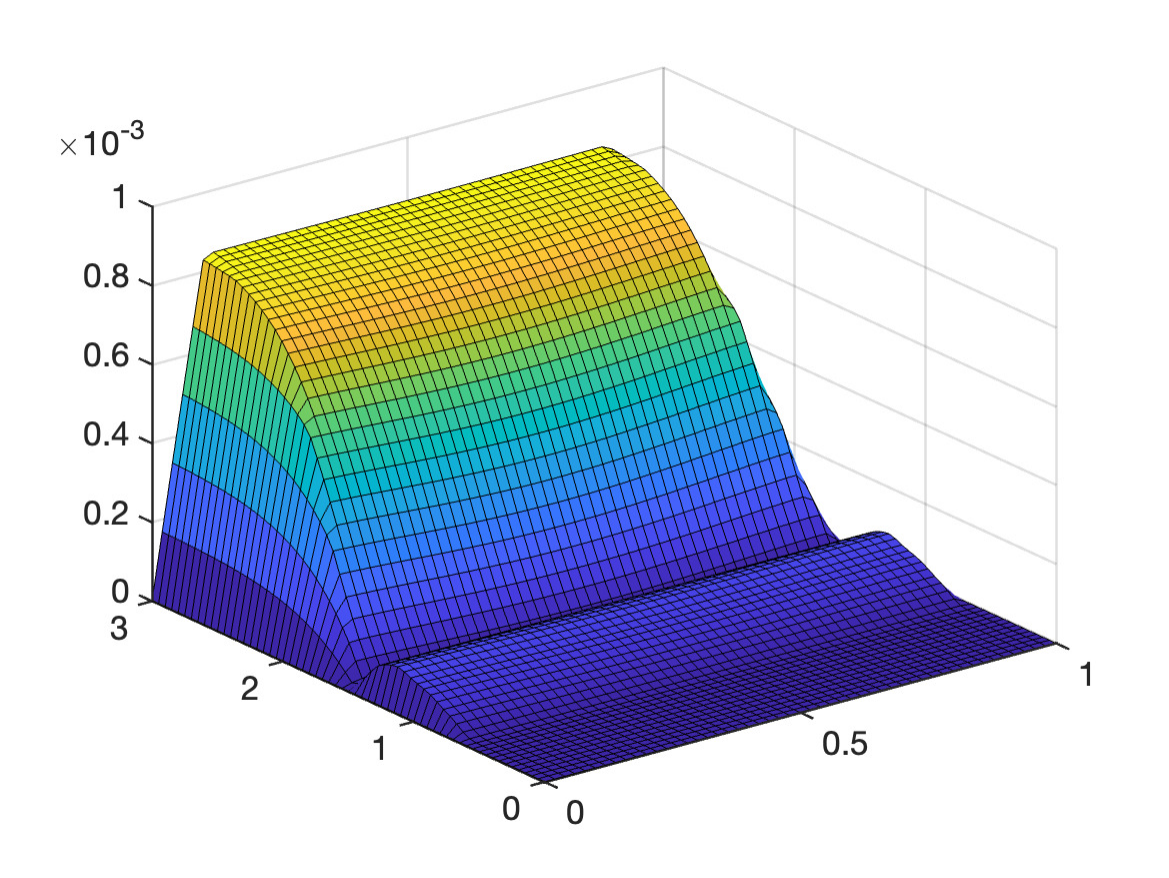}
\includegraphics[height=3.4truecm]{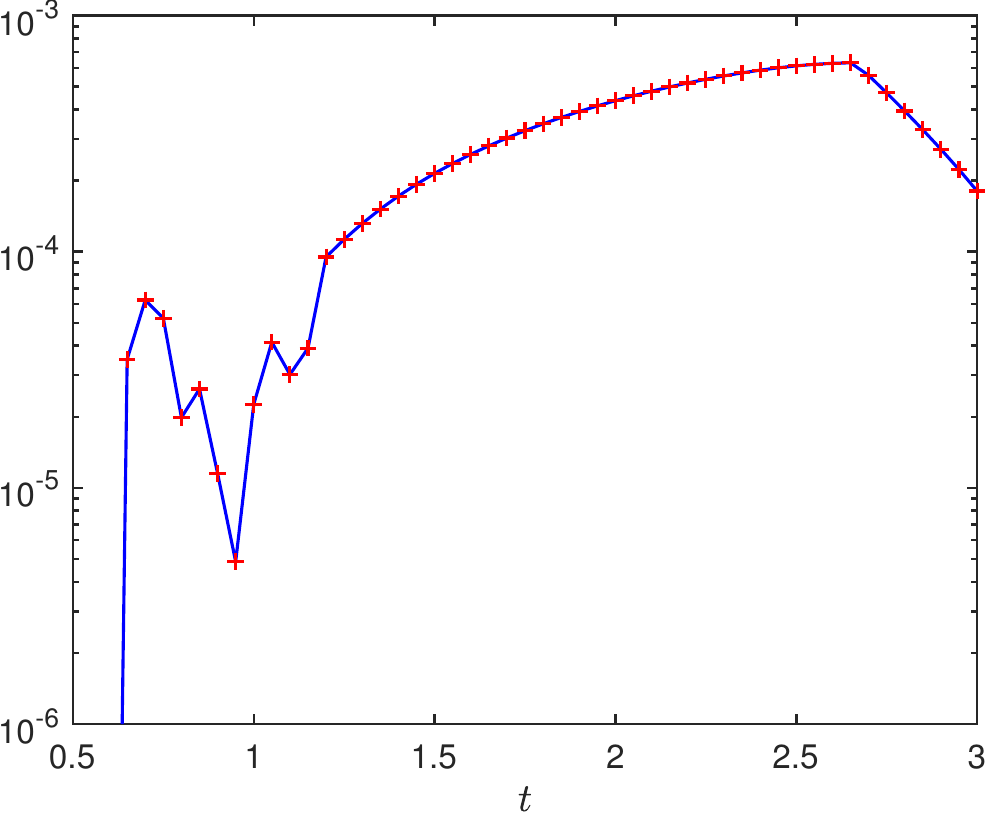}
\includegraphics[height=3.44truecm]{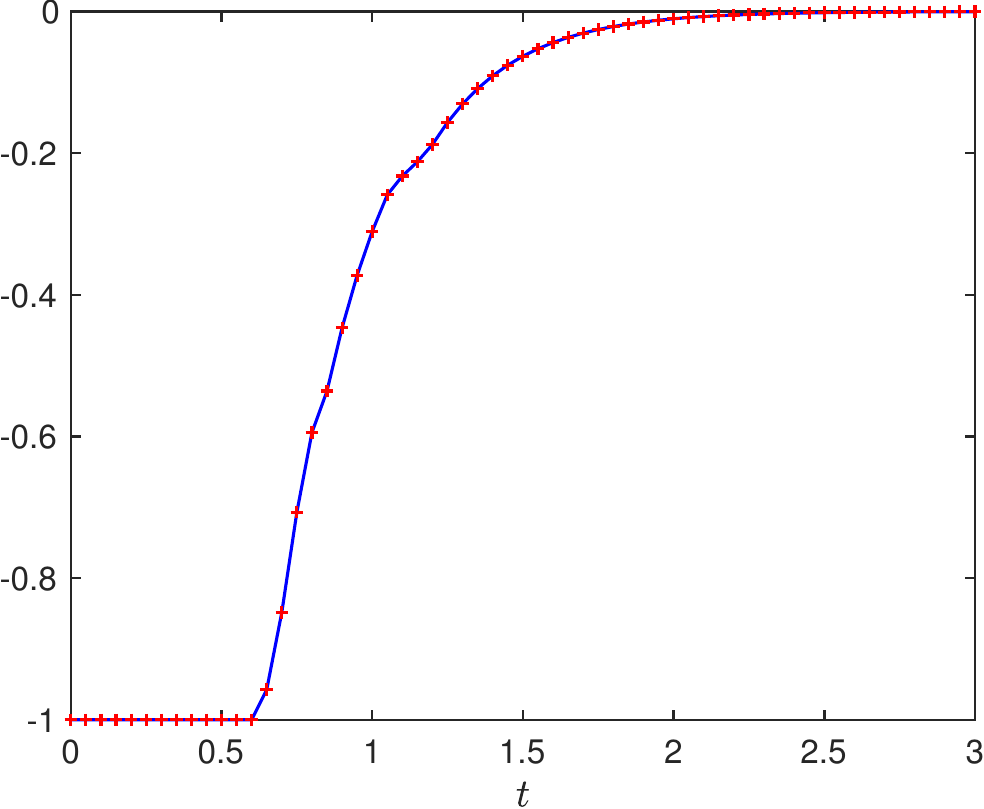}

\textcolor{red}{\caption{\label{rd_u_ut} Test 1:  Results for POD basis extracted from snapshots ($\Delta x =1/50$, $r=3$); Relative errors of HJB state (left) and control (centre) with respect to the case where POD basis is taken from time derivatives;  HBJ control (right).}}
\end{center}
\end{figure}

\textcolor{red}{The fact that very similar results are obtained when the POD basis is taken from the snapshots or the time derivatives should not be surprising. As shown in~\cite{Iliescu-Wang}, wether better results are obtained if the  POD basis is extracted from the snapshots or from their difference quotients is case-dependent and, as shown in~\cite{Bosco_Volker_yo2},
very similar results are usually obtained when the POD basis is taken from the time derivatives or the snapshots difference quotients. The advantage of using time derivatives or difference quotients for the POD basis is more from the theoretical side, since it allows to prove optimal convergence of the POD methods with less assumptions than when the POD basis is extracted from the snapshots. More recently, in~\cite{Bosco_Julia_pointwise}, it has been proved that using only snapshots for the POD basis, it is possible to prove error estimates for the corresponding POD methods with convergence rates as close to optimal as the smoothness of the solution from where the snapshots are taken allows. In view of the recent results in~\cite{Bosco_Julia_pointwise}, the analysis in the present paper
 can be easily adapted to cover also the case where POD basis is taken from the snapshots.
}

\subsection{Test 2: Advection-diffusion equation}
\label{Test2}

As in~\cite{Alla_Falcone_Volkwein}, we now consider~\eqref{crd} with $\gamma=1$ and $\mu=0$, $I=(0,2)$ and $z_0(x)=\max(0,0.5\sin(\pi x))$. We take $b$ as the characteristic function of the inverval~$(1/2,1)$. To compute the POD basis we compute the time derivatives of the states for constant controls $u=-2.2,-1.1,0$. The semidiscretization was done with a standard finite difference method
$$
\frac{dy_j}{dt}  =\frac{y_{j+1}-2y_j + y_{j-1}}{10(\Delta x)^2} - \frac{y_{j+1}-y_{j-1}}{2\Delta x} + b(x_j), \quad j=1,\ldots,N-1, \qquad y_0=y_N=0,
$$
which is second order convergent in problems with sufficiently smooth solutions.

Since the initial state~$z_0$ does not possess second-order derivatives in~$L^2$, we notice then that the time derivative $z_t$ blows up when $t\rightarrow 0$. For this reason, after the spatial discretization by finite differences, we replaced the time derivative at $t=0$ by the difference quotient~$(y^{(1)}-y^{(0)})/\Delta t$ of states at $t=0$ and $t=\Delta t$.  \textcolor{red}{Perhaps also for lack bounded time derivatives at $t=0$ and the more dissipative nature of the implicit Euler method, we found that, in the computation of the optimal HBJ states and controls, better results were obtained if the implicit Euler method with $\Delta t=1/20$ was used instead of the NDF with small tolerances.
Also, for reasons that we do not understand at present, we found that better results were obtained when the POD approximation was a linear combination of the POD basis plus  the initial condition $y_0$, instead of a linear combination of the POD basis plus the mean as in the previous section.} 

\textcolor{red}{In the previous test we had an invariance set so that we did not need to impose any boundary condition for solving
\eqref{fully_discrete_pod}.}
In this test we found it impossible to find a set~$\Omega^r$ satisfying condition~\eqref{new_inv} both when the POD basis is taken from the states and from their time derivatives. In this last case the set $\Omega^r$ for $r=4$ we used in the experiments was
$$\Omega^r=(-0.5,0.7)\times(-0.3,1.5)\times (-0.3,0.2)\times (-0.05,0.15).$$
 To overcome the lack of invariance of this set,  whenever for a vertex~$y^i_r$ we had $y^i_r+hf^r(y^i_r,u)\not\in \overline \Omega^r$, we simply replaced $y^i_r+hf^r(y^i_r,u)$ by its closest point on~$\partial\Omega^r$. This resulted in changing the value of~$y^i_r+hf^r(y^i_r,u)$ in less than 2\% in the first two coordinates in the POD basis and $15\%$ in the remaining ones, except for some negative values of the fourth coordinate where errors up to~$60\%$ were encountered. For example, an error of $15\%$ in the
 third coordinate  means that for some values of $y^i_r+hf^r(y^i_r,u)$ the third coordinate could be in the
 set $[-0.345,0.23]$ instead of $[-0.3,0.2]$. Nevertheless, as we will see below, the results obtained with the POD approximation in this test were excellent.

Since this problem is linear-quadratic, the solution of HJB equation can be computed by solving Riccati equation. In Fig.~\ref{cvcontrol} we show the uncontrolled solution (left), the optimal LQR state (middle) and the optimal LQR control (right).
\begin{figure}
\begin{center}
\includegraphics[height=3.8truecm]{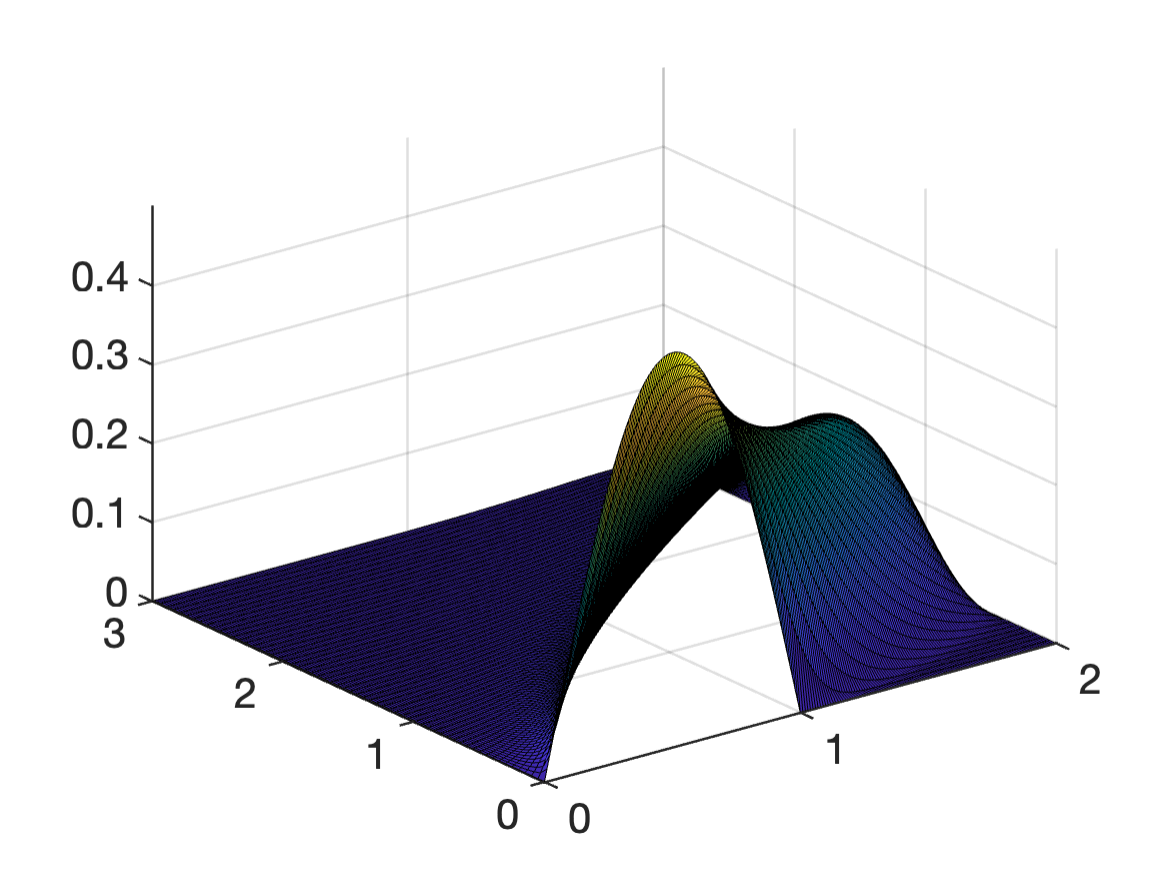}
\includegraphics[height=3.8truecm]{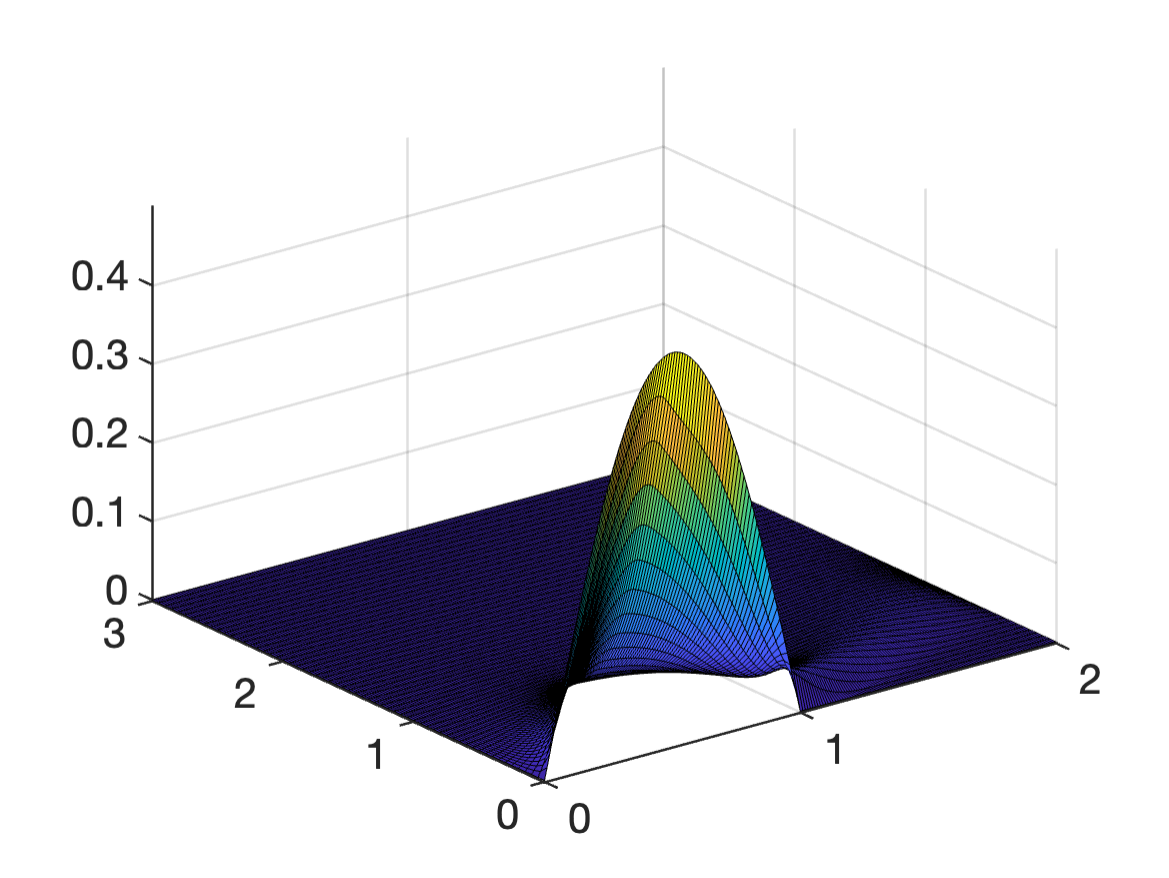}
\includegraphics[height=2.9truecm]{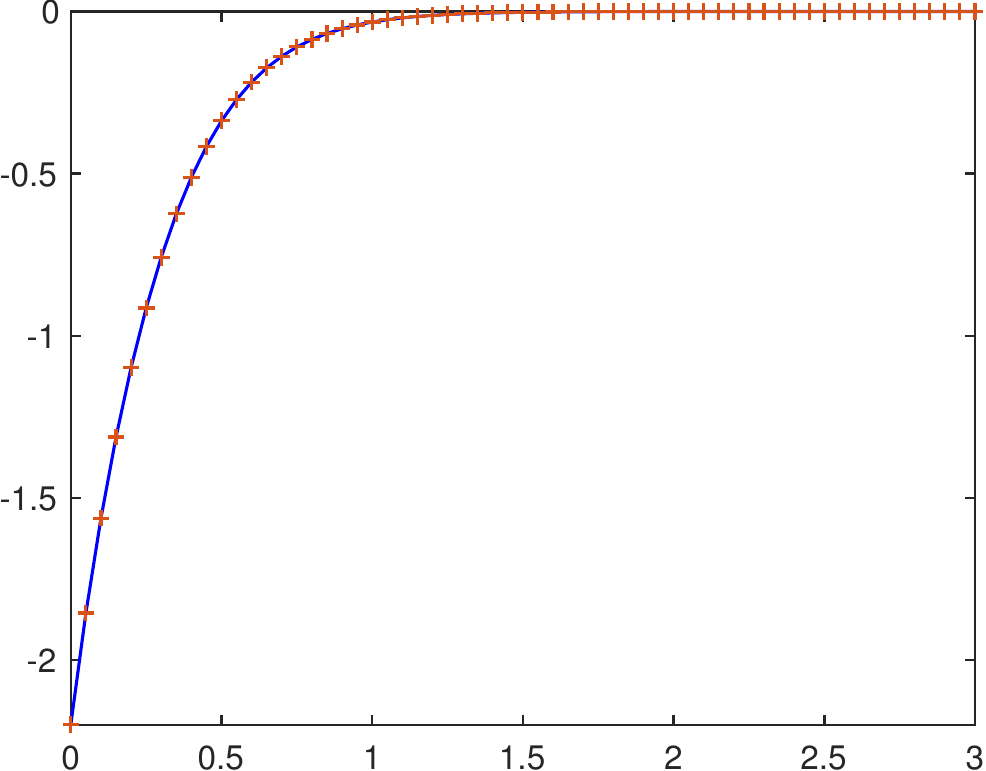}
\caption{\label{cvcontrol} Test 2: Uncontrolled solution (left), the optimal LQR state (middle) and the optimal LQR control (right).}
\end{center}
\end{figure}

\begin{figure}
\begin{center}
\includegraphics[height=3.9truecm]{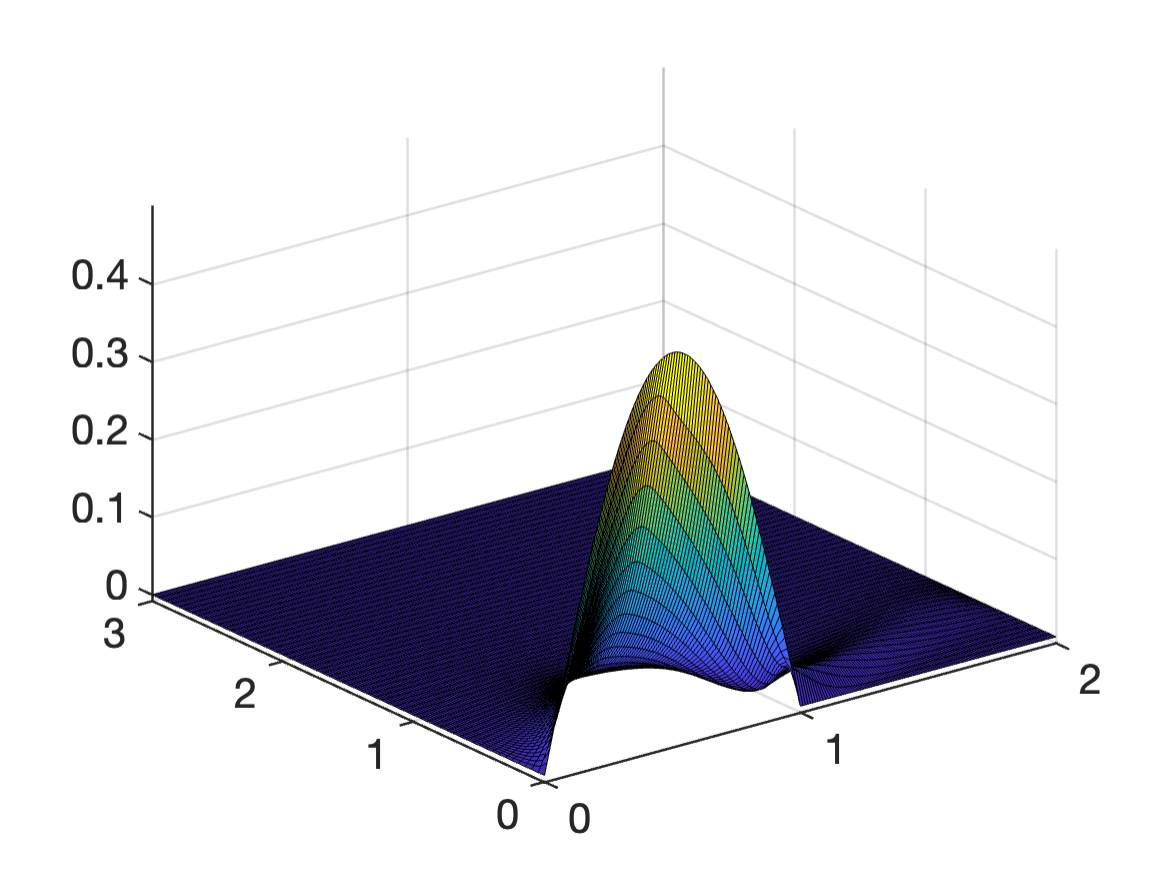}
\includegraphics[height=3.9truecm]{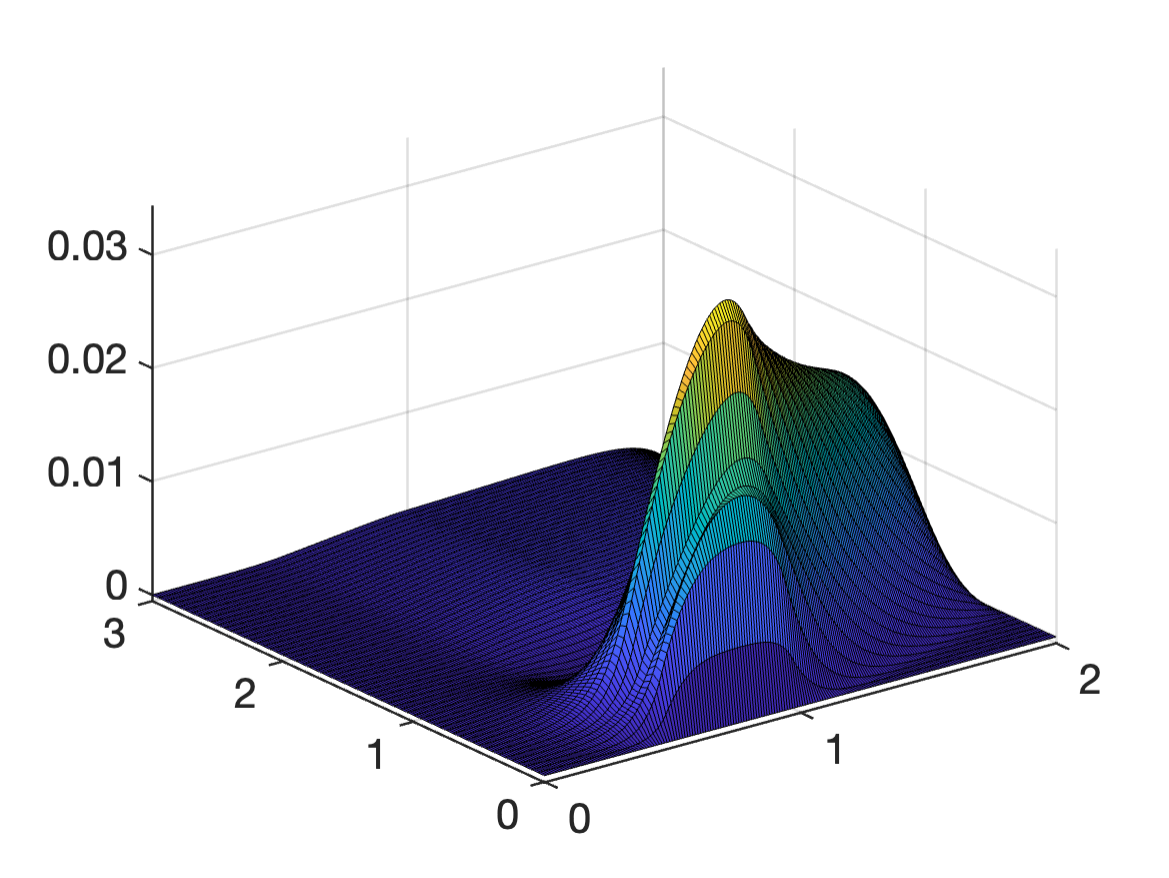}
\includegraphics[height=3.9truecm]{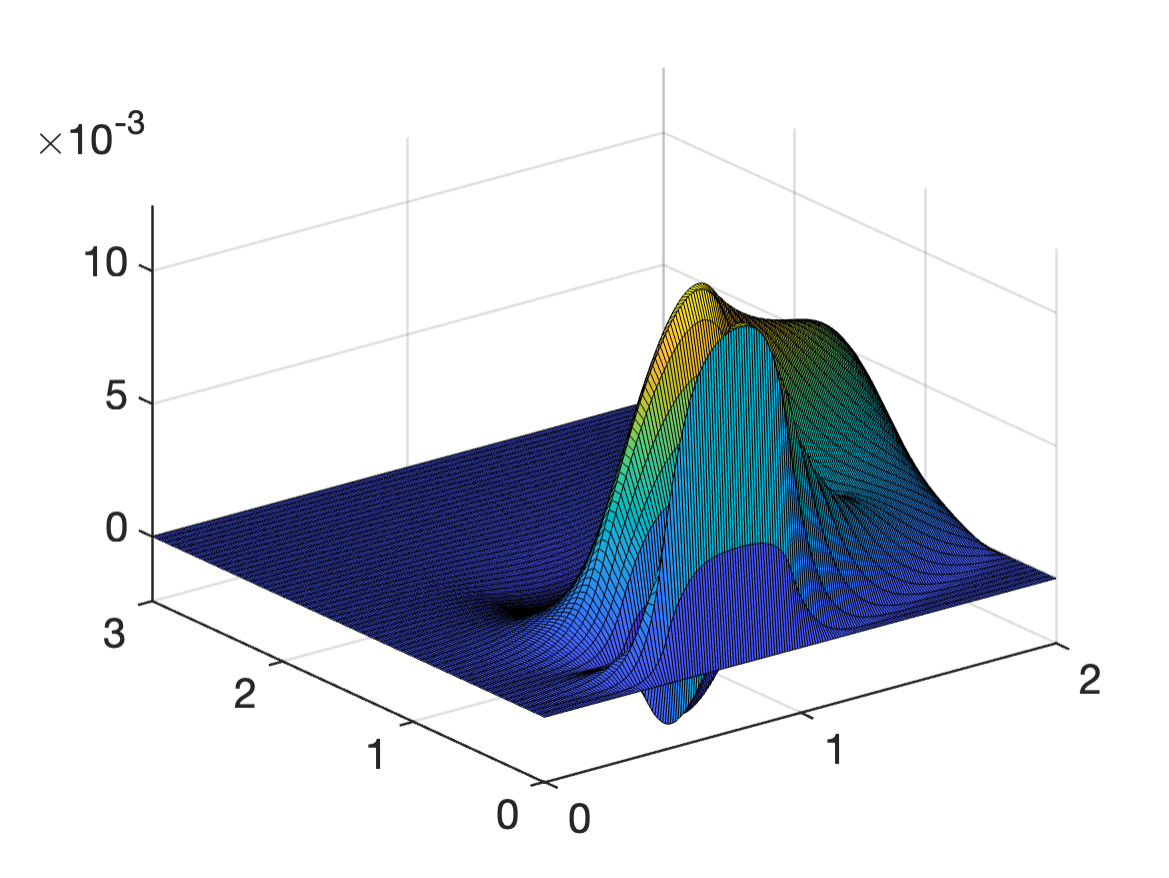}

\includegraphics[height=4truecm]{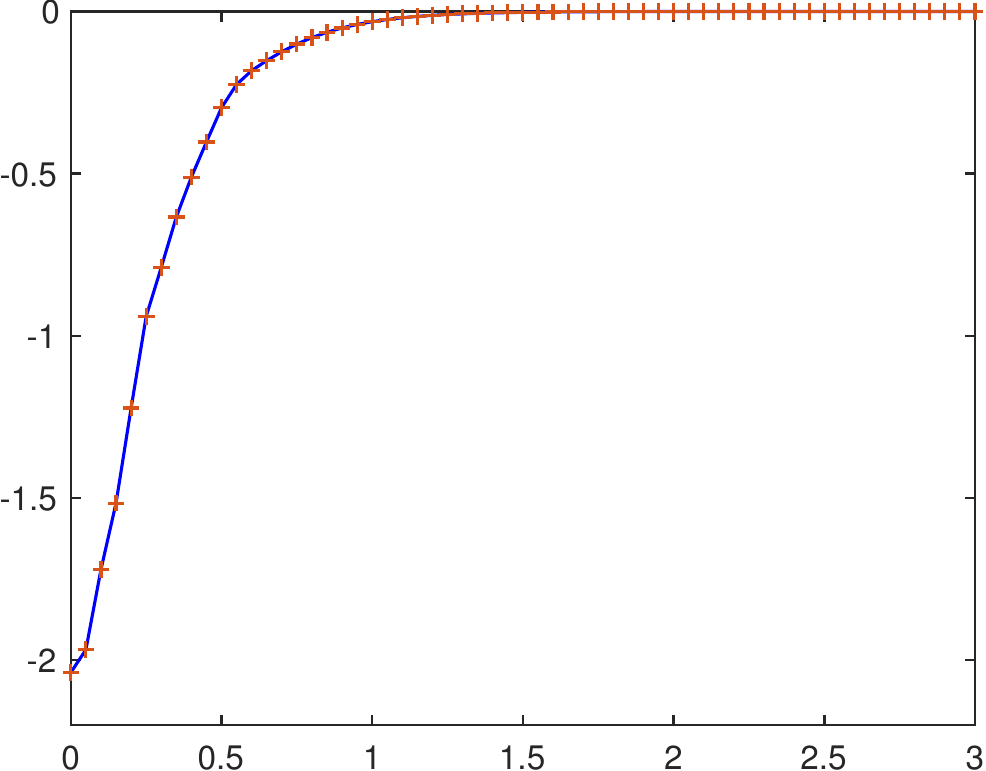}
\includegraphics[height=4truecm]{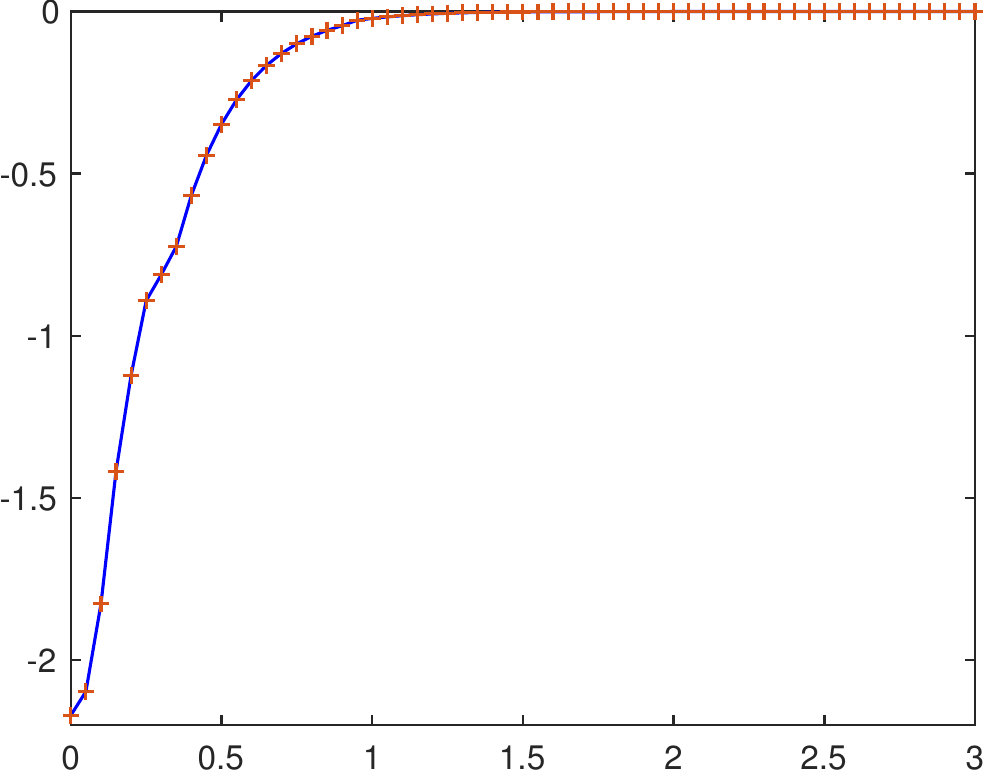}
\includegraphics[height=4truecm]{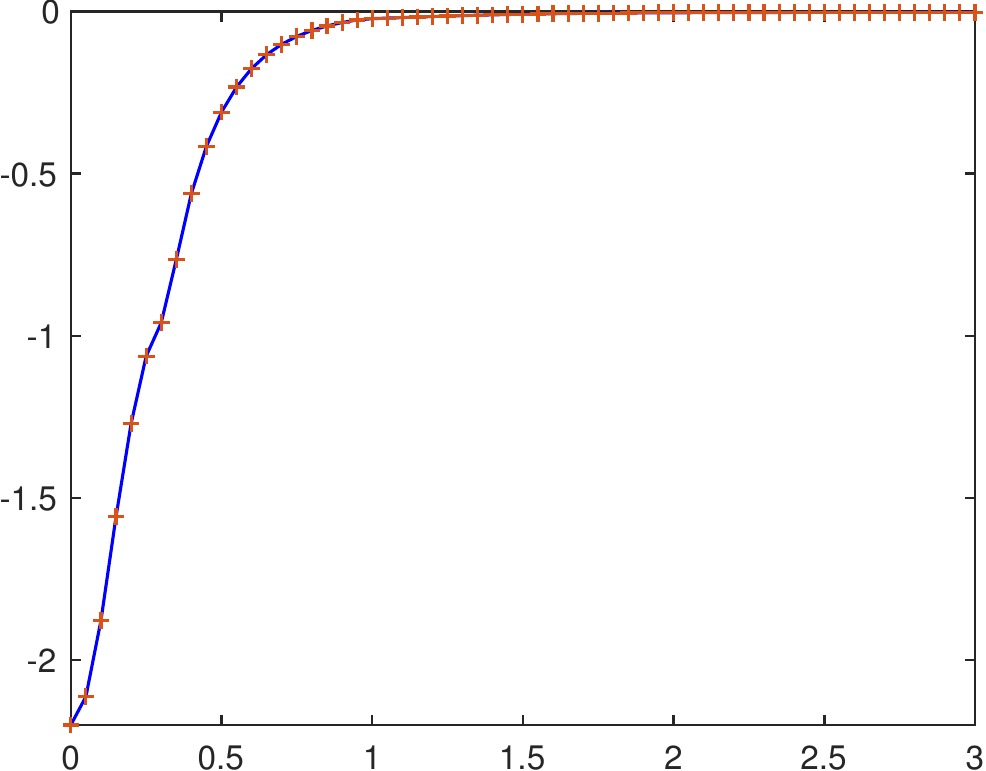}

\caption{\label{cv6} Test 2: Optimal HJB states computed with $r=4$ POD basis functions (top-left), difference between optimal solution with 4 and 2 POD basis functions (top middle), difference between optimal solution with 4 and 3 POD basis  functions (top-right). Optimal HJB controls with $r=4,3,2$ (bottom).}
\end{center}
\end{figure}

In Figure~\ref{cv6} we have represented on top the optimal solution (left) for $r=4$, the difference between optimal solution with 4 and 2 POD basis functions (top middle) and the difference between optimal solution with 4 and 3 POD basis functions (top right). On the bottom part we have represented the optimal controls for $r=2$, $3$ and $4$. Also in this case, the improvement with respect to the results in~\cite{Alla_Falcone_Volkwein} is remarkable. In particular, the optimal controls in Figure~\ref{cv6} compare very well with the
optimal LQR control of Figure~\ref{cvcontrol} even for the case with only $r=2$ basis functions in  our POD method. 

To conclude, in Figure~\ref{LQR_err} (left) we show the difference between the optimal LQR state and the optimal HJB state computed with $r=4$ POD basis functions. On the right, we show the relative errors $\left|u_{\rm HJB} - u_{LQR}\right| /\max(10^{-3}, \left| u_{LQR}\right|)$ of the optimal HJB controls with respect to the optimal LQR control for $r=2, 3$ and $4$. It can be seen a very good agreement between HJB and LQR optimal states. With respect to the optimal controls, we notice that whereas with $r=3$ and~$r=4$ POD basis functions the errors do not exceed 30\% and, indeed, they stay below 10\% most of the time, this is not the case of $r=2$ POD basis functions, where errors are above 100\% for more than half the time interval. However, let us observe that we are considering relative errors on the right of Figure~\ref{LQR_err} and that restricting ourselves to the time interval in which the optimal control is sufficiently away from zero the errors for $r=2$ are also below 35\%.  
\begin{figure}
\begin{center}
\includegraphics[height=3.9truecm]{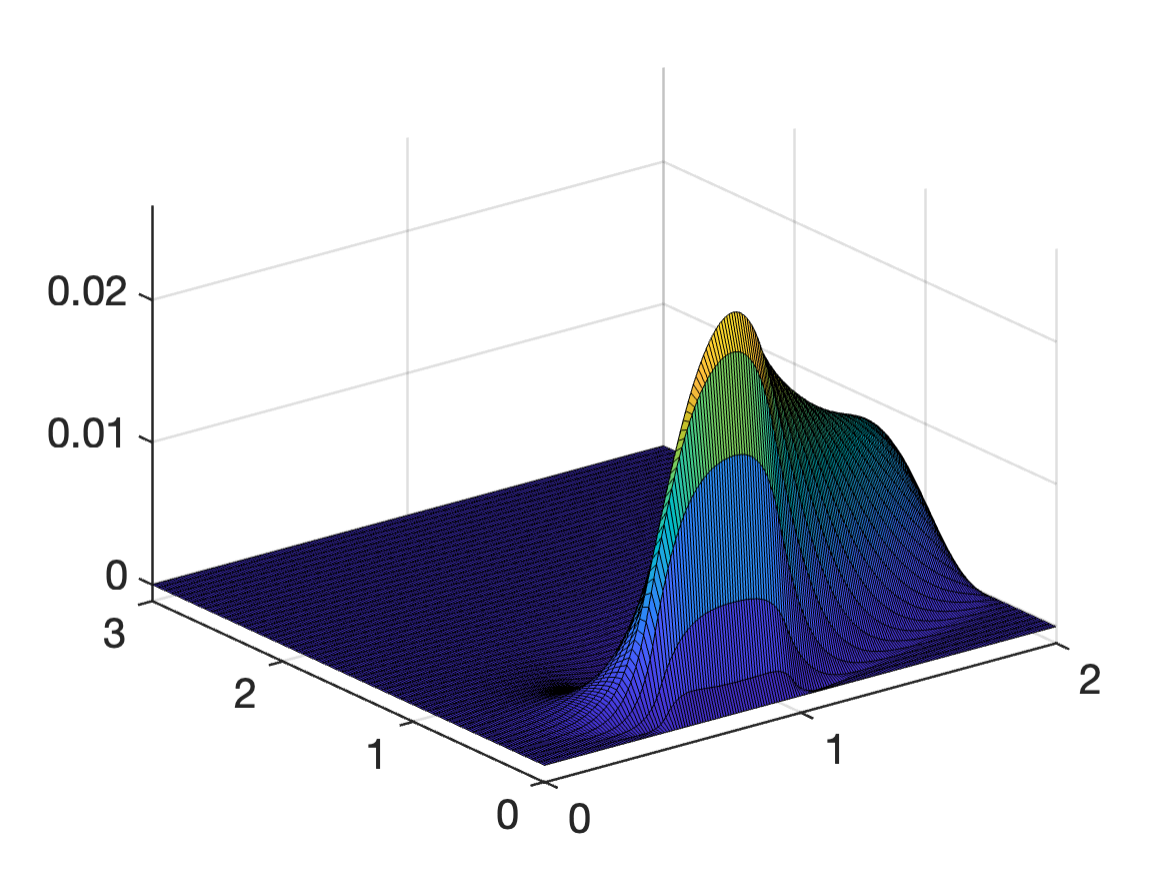}\quad
\includegraphics[height=3.9truecm]{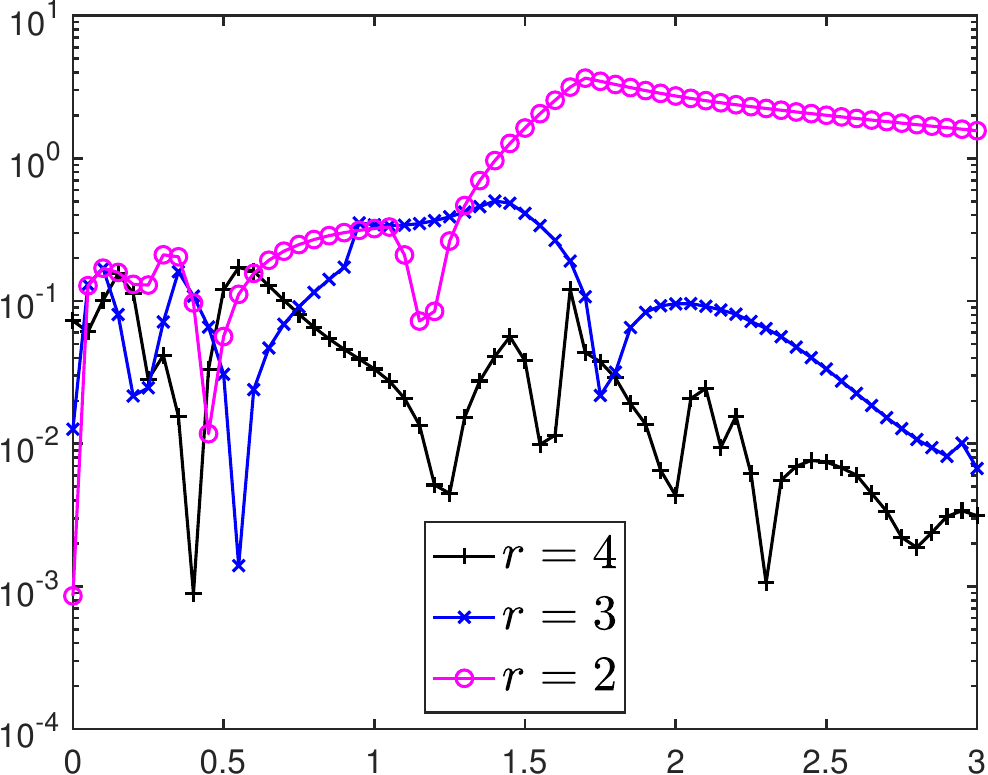}

\caption{\label{LQR_err} Test 2: difference between the optimal LQR state and the optimal HJB state computed with $r=4$ POD basis functions (left) and relative errors $\left|u_{\rm HJB} - u_{LQR}\right| /\max(10^{-3}, \left| u_{LQR}\right|)$  of the optimal HJB controls with respect to the optimal LQR control (right).
}
\end{center}
\end{figure}
Again, the results here (which correspond to $k_r=0.1$) compare favourably with those in the literature.
\begin{figure}[h]
\begin{center}
\includegraphics[height=3.9truecm]{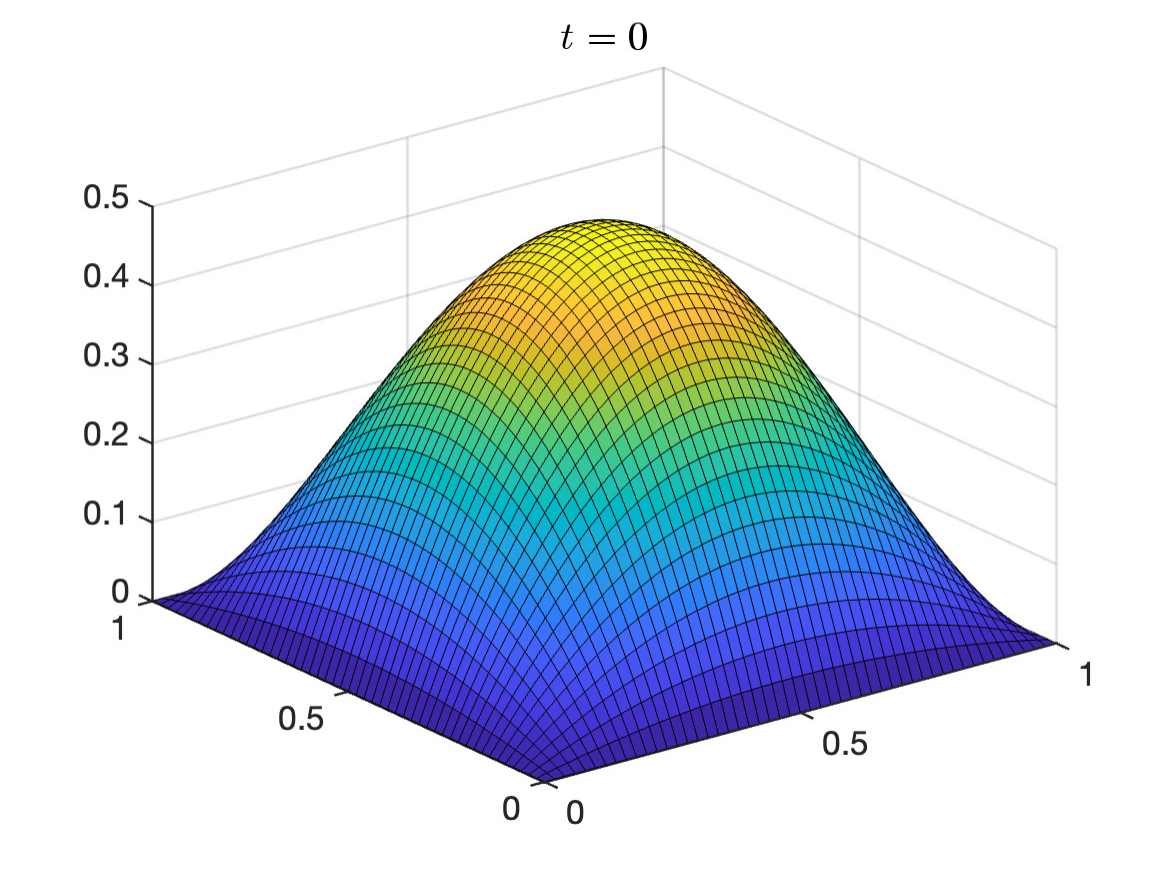}
\includegraphics[height=3.9truecm]{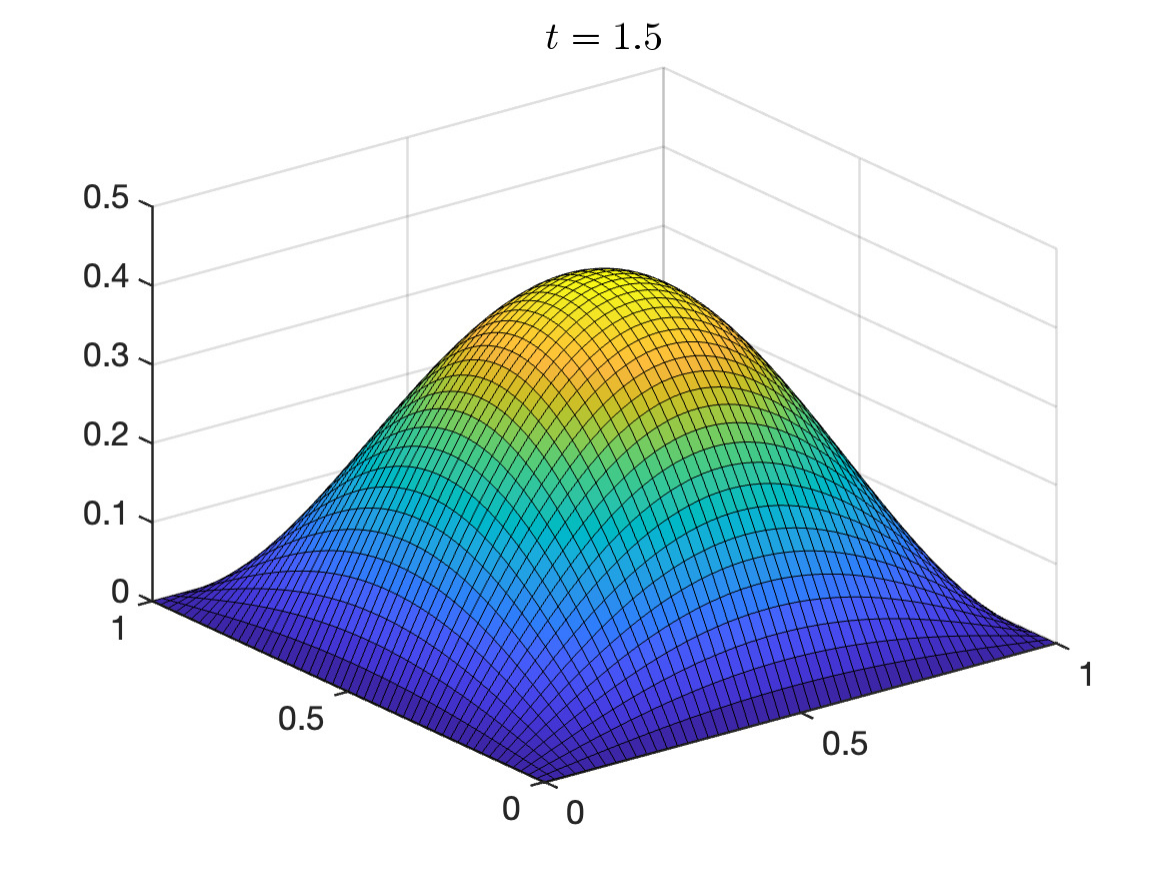}
\includegraphics[height=3.9truecm]{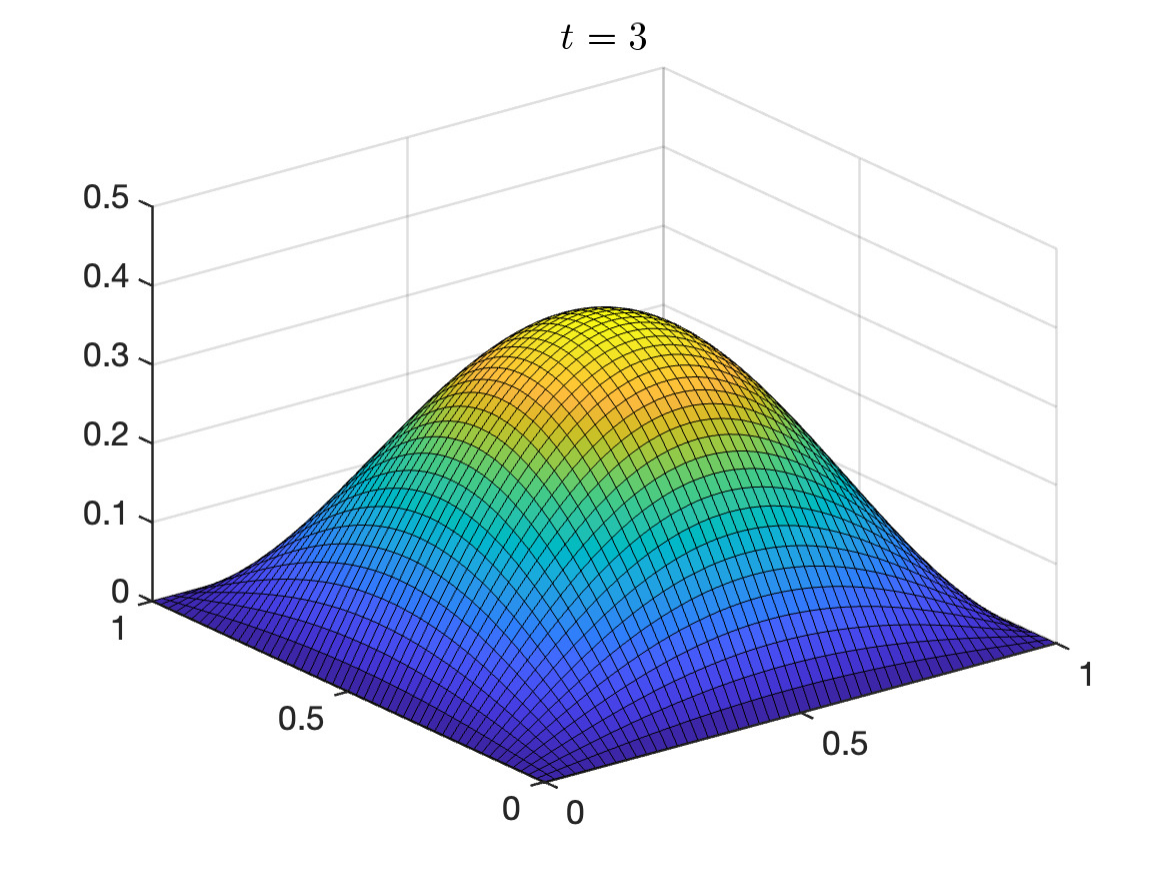}
\caption{\label{fig1_2D} Test 3: Uncontrolled solution at $t=0,1.5,3$.}
\end{center}
\end{figure}

\subsection{Test 3: A two-dimensional reaction-diffusion equation.}
\label{Test3}

We extend~\eqref{crd} to two dimensions. In particular, we consider,
\begin{equation}
\label{crd2D}
\begin{array}{rcll}
z_t-\varepsilon \Delta z  + (z^3-z) & =& ub\quad &\hbox{\rm in $\Omega\times (0,t_e)$},\\
z(\cdot,0)&=&z_0\quad &\hbox{\rm in $\Omega$},\\
z(\cdot,t) &=&0\quad&\hbox{\rm in~$\partial \Omega\times (0,t_e)$},
\end{array}
\end{equation}
with $\varepsilon=1/10$, $\Omega=[0,1]\times[0,1]$ and~$z: \Omega\times[0,t_e]$ denotes the state. The control~$u$ belongs to~${\mathbb U}_{\rm ad} = L^2(0,t_e,[u_a,u_b])$,
with $u_a=-1$ and~$u_a=1$. The cost function is as~\eqref{cost_f} but with the state measured in $L^2(\Omega)$ instead of~$L^2(I)$, that is
\begin{equation}
\label{cost_f}
\int_{0}^{t_e} e^{-\lambda t}\left(\left\| z(\cdot,t,u)\right\|_{L^2(\Omega)}^2 + \frac{1}{100}\left|u(t)\right|^2\right)\,dt,
\end{equation}
with $\lambda=1$ as before. Similarly to~Section~\ref{Test1}, we take $b(x,y)=z_0(x,y)=4x(1-x)y(1-y)$ and $t_e=3$. In Fig.~\ref{fig1_2D} we show the uncontrolled solution at
the initial time, at $t=t_e/2$ and~$t=t_e$.

For the finite-difference approximation, we consider $y:[0,t_e]\rightarrow {\mathbb R}^{(N-1)^2}$ with components $y_k(t)\approx z({\boldsymbol x}_k,t)$, where, for $k=(j-1)(N-1) +i$, $i,j=1,\ldots,N-1$,  ${\boldsymbol x}_k=
(x_i,y_j)$,  and $x_i=i\Delta x$, $y_j=j\Delta y$,  $\Delta x=\Delta y=1/N$, solution of
\begin{equation}
\label{fd_rd-2D}
\hat Cy_t=\frac{1}{10}\hat Ay + \hat C(\hat F(y) + u\hat B)
\end{equation}
where the components of~$\hat F$ and $\hat B$ are, respectively $\hat F_k = y_k(1-y_k^2)$,  $\hat B_k=4 x_i(1-x_i) y_j(1-y_j)$, $k=(j-1)(N-1) + i$, $i,j=1,\ldots,N-1$, and $\hat A$ and~$\hat C$  are $(N-1)^2\times (N-1)^2$  matrices given by $\hat A=I\otimes A + A\otimes I$, $\hat C= I\otimes C + C\otimes I$, where $I$ is the identity of order $N-1$, $\otimes$ represents the Kronecker product of matrices, and~$A$ and~$C$ are the matrices in~\eqref{matricesAyC}
so that, as in~Section~\ref{Test1}, the finite-difference discretization~\eqref{fd_rd-2D} is fourth-order convergent. The norm we consider in~${\mathbb R}^{(N-1)^2}$ is given by
\begin{equation}
\label{norma-2D}
\left\| y\right\|^2 = \Delta x\Delta y \sum_{k=1}^{(N-1)^2} y_k^2,
\end{equation}
so that it is a discrete version of the~$L^2$ norm in~$\Omega$.

In spite of the similarity with the one-dimensional case, more POD modes were needed to attain results similar to those in Section~\ref{Test1}, and for $r=5$ modes a set ${\mathbb U}_{\rm ad}$ with 81 controls uniformly distributed in~$[-1,1]$ were used for higher accuracy. The set~$\overline \Omega^r$ for $r=5$ was given by
$$
\overline \Omega^r =[-0.8,0.36]\times[-0.02,0.03]\times[-0.01,0.01]\times [-0.01,0.01]\times [-0.01,0.01].
$$
For this set we checked that condition \eqref{new_inv} holds.

On the top plots in Fig.~\ref{fig2_2D} (top three plots) we show the optimal HJB state at three different times  computed with $r=5$. Notice that the vertical scale is 5~times smaller that in the plots in~Fig.~\ref{fig1_2D}. We can see that the results are very similar to the one-dimensional case.
\begin{figure}[h]
\begin{center}
\includegraphics[height=3.9truecm]{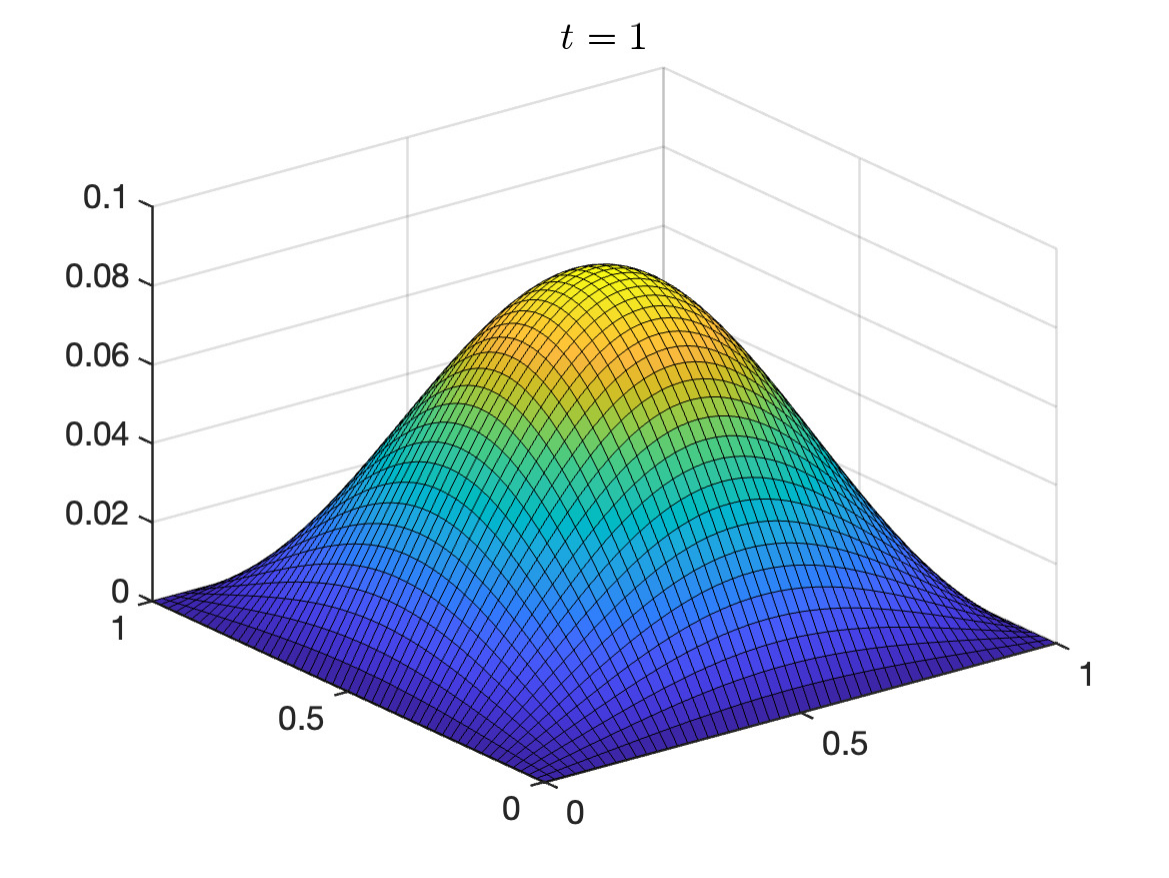}
\includegraphics[height=3.9truecm]{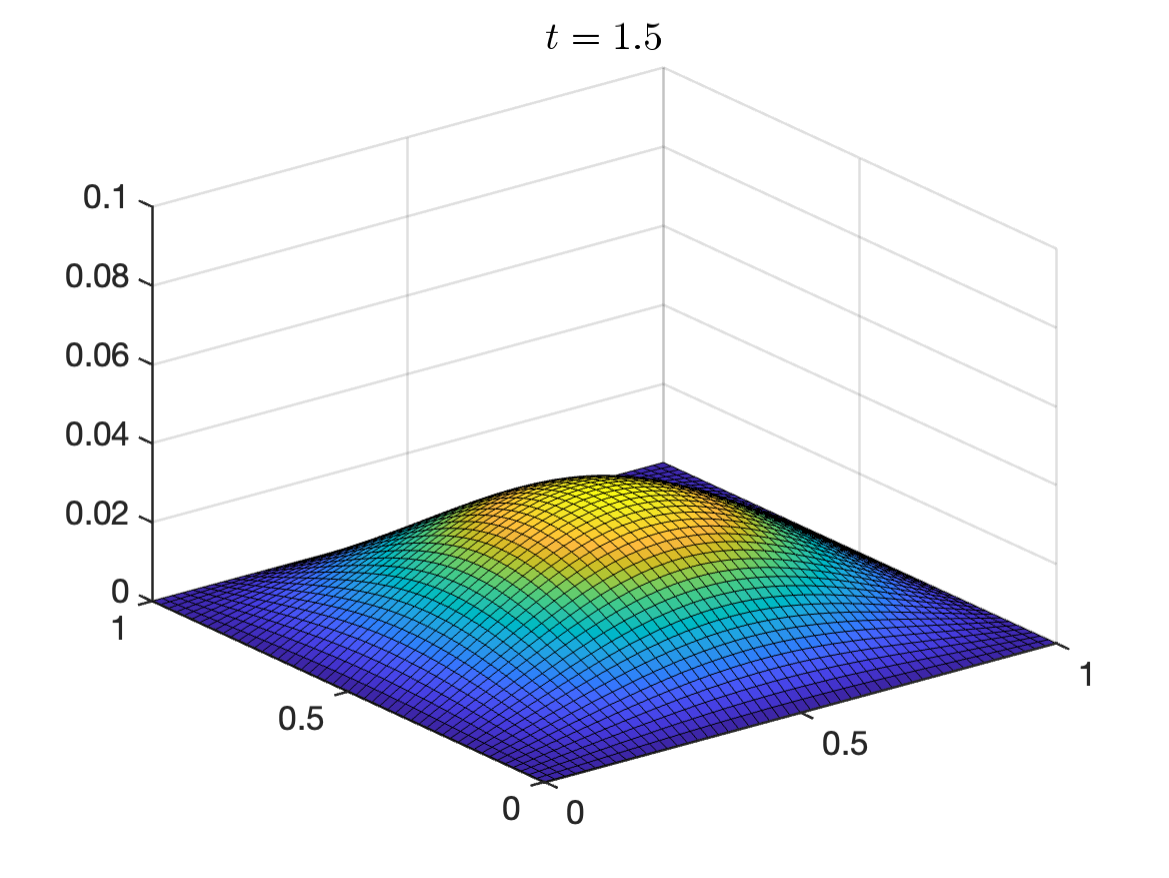}
\includegraphics[height=3.9truecm]{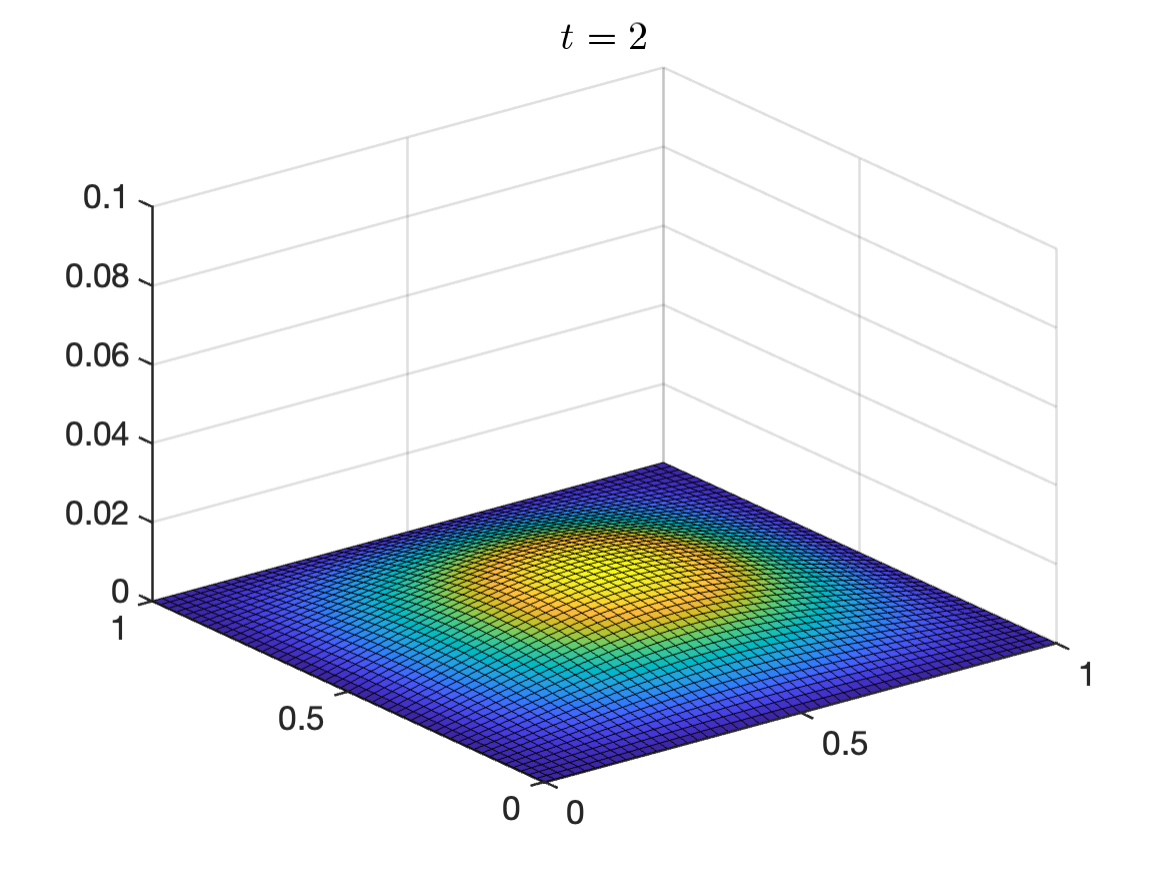}

\includegraphics[height=3.9truecm]{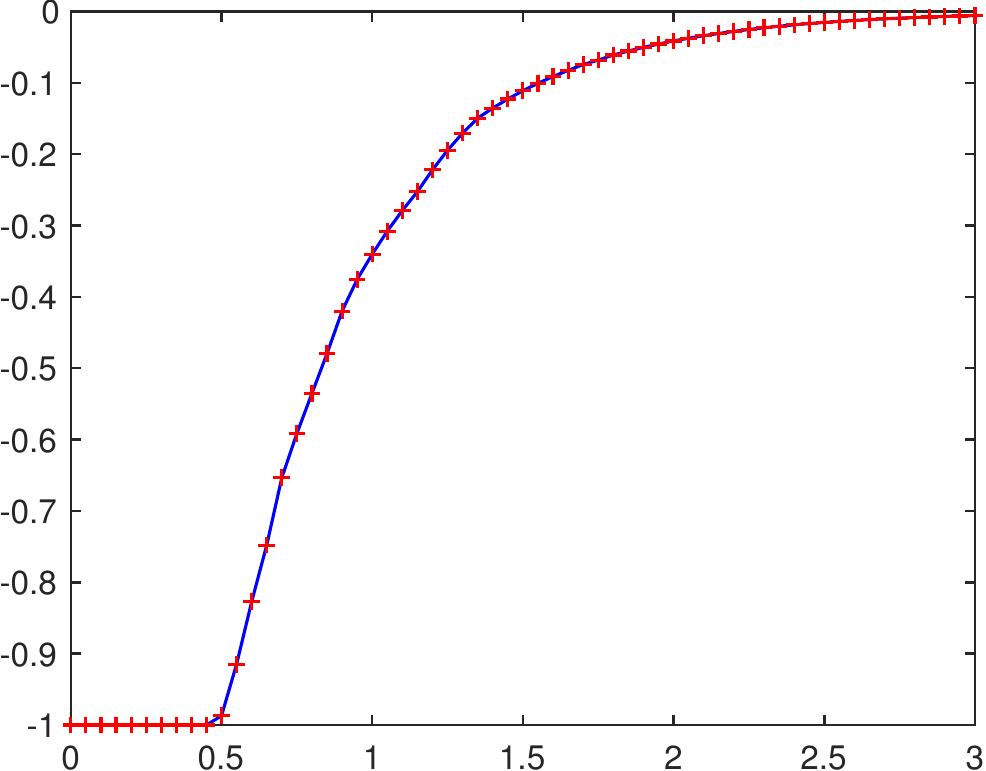}
\includegraphics[height=3.9truecm]{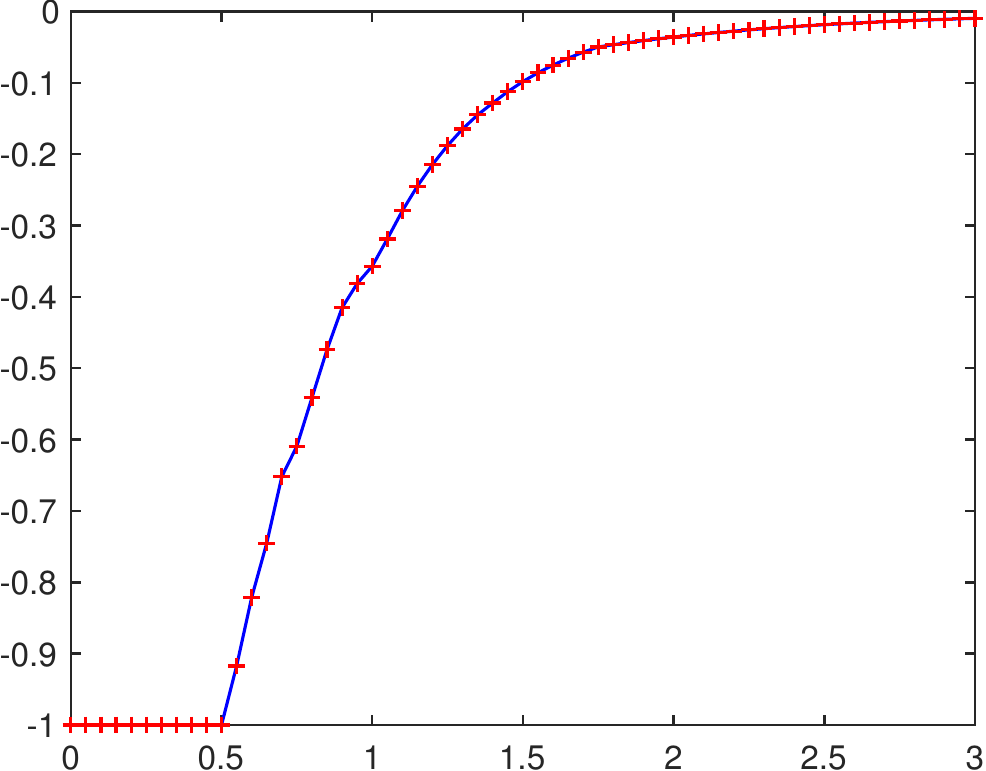}
\includegraphics[height=3.9truecm]{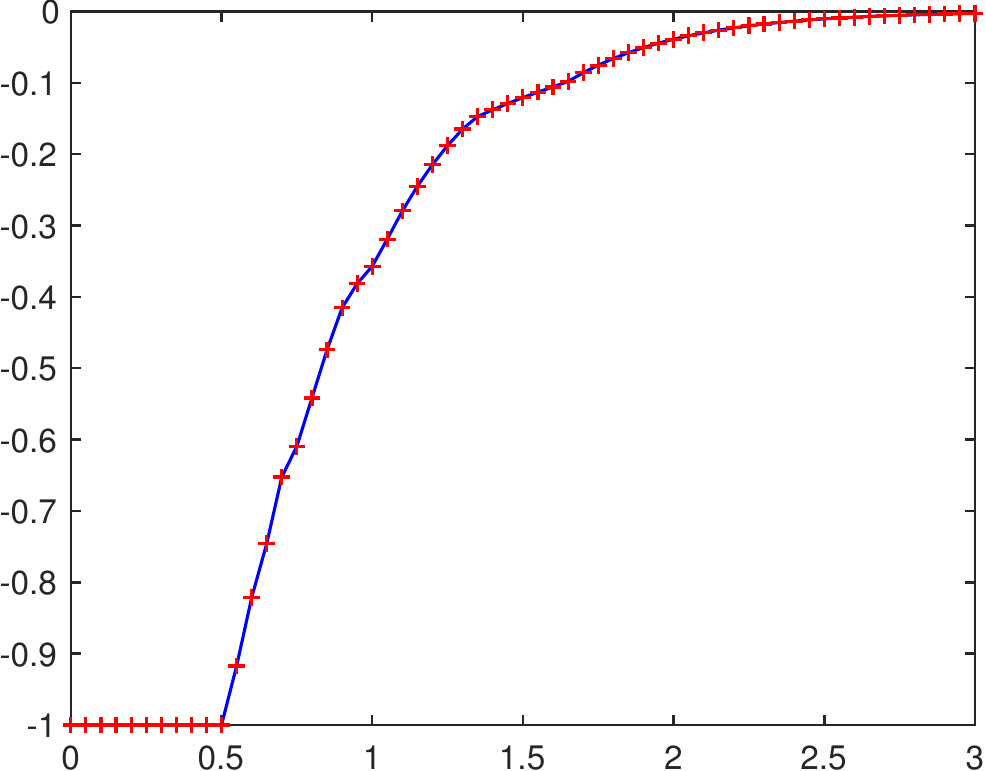}

\caption{\label{fig2_2D} Test 3: Optimal HJB states computed with r=5 POD basis functions at $t=1,1.5,2$ (top) and optimal Optimal HJB controls with $r=4,3,2$ (bottom). The red crosses correspond to the values of the controls that have been
joined by a blue line in the plot.}
\end{center}
\end{figure}

\section{Conclusions}
In this paper we introduce a reduced order method based on POD to mitigate the curse of dimensionality in the numerical approximation
of HJB equations. The novelty of the method is the use of snapshots based on temporal derivatives of the controlled nonlinear dynamical system. 

We carry out the error analysis of the method based on the recent results obtained in \cite{Javier_yo} that allow us to get sharper error bounds than those appearing in the literature. In particular, the factor $1/h$ where $h$ is the time step of the fully discrete method does not appear in our error bounds. Our error bounds are  optimal in terms of the time step $h$ and the mesh diameter of the reduced space $k_r$
and as usual depend also on the size of the tail of eigenvalues in the singular value decomposition. The use of snapshots based on time derivatives allow us to give a bound for some of the terms in the error that could not be bounded with the standard approach. 

\textcolor{red}{Numerous} numerical experiments are performed. \textcolor{red}{We check that the method behaves in practice as expected from the theoretical error analysis carried out it the present paper}.  \textcolor{red} {We show the importance of choosing a small tolerance for the fixed point iteration solving the POD fully discrete scheme \eqref{fully_discrete_pod}. We include a two-dimensional example to check the good performance of the method also in that case.} The new method we propose obtains better results than a similar POD method presented in~\cite{Alla_Falcone_Volkwein}. 
\textcolor{red}{Moreover, even for the method shown in~\cite{Alla_Falcone_Volkwein}, we have performed a numerical
experiment in which we also get better results.  This allows us to conclude the good performance, not
only of the new method introduced in this paper, but also of the method
presented in~\cite{Alla_Falcone_Volkwein}, when choosing accurate snapshots as well as taking denser enough sets  for the control variable
and, as mentioned above, small enough tolerance in the fixed point method solving \eqref{fully_discrete_pod}}.
\textcolor{red}{Finally, even in an example in which it is not possible to find an invariance set, we propose
a procedure that allows to apply the method proposed in this paper and that produce excellent results.}


\begin{thebibliography}{10}

\bibitem{Alla_Falcone_Volkwein} {\sc A. Alla, M. Falcone \& S. Volkwein},
\textit {Error analysis for POD approximations of infinite horizon problems via the dynamic programming approach}, SIAM J. Control Optim., 55 (2017),  3091--3115.

\bibitem{Alla_fal_sa} {\sc A. Alla, M. Falcone \& L. Saluzzi}, \textit{An efficient DP algorithm on a tree-structure for finite horizon optimal control problems}, SIAM J. Sci. Comput. 41 (2019) A2384?A2406.

\bibitem{Alla_et_all} {\sc A. Alla, H. Oliveira \& G Santin}, 
\textit{HJB-RBF Based Approach for the Control of PDEs}, J. Sci. Comput. 96 (2023), Paper 25, 27 pp.

\bibitem{Bardi}
{\sc M. Bardi \& I. Capuzzo-Dolcetta},
\textit {Optimal Control and Viscosity Solutions of Hamilton-Jacobi-Belmann Equations}, Springer Science+Business Media, LLC, New York, 1997.

\bibitem{boba_etal}
{\sc O. Bokanowski, N. Gammoudi \& H. Zidani},
 \textit {Optimistic planning algorithms for state-constrained optimal control}, Comput. Math. Appl. 109 (2022), 158--179.


\bibitem{boba2}
{\sc O. Bokanowski, J. Garcke, M. Griebel \& M. Klompmaker},
\textit {An adaptive sparse grid semi-Lagrangian scheme for first order Hamilton?Jacobi Bellman equations}, 
J. Sci. Comput. 55 (2013) 575--605.

\bibitem{locke-singler}
{\sc S.L.~Eskew \& J. R.~Singler},
\textit {A new approach to proper orthogonal decomposition with difference quotients},
 Adv. Comput. Math., 49(2), Paper No. 13, 33 (2023).


\bibitem{Kalise_et_al} {\sc D. Kalise \& K. Kunisch}, \textit{Polynomial approximation of high-dimensional Hamilton-Jacobi-Bellman equations and applications to feedback
control of semilinear parabolic PDEs}. SIAM J Sci Comput. 40 (2018), A629-A652.

\bibitem{Koc_et_al} {\sc B.~Koc, S.~Rubino, M.~Schneier, J.~Singler, \& T.~Iliescu}, \textit{On optimal pointwise in time error bounds and difference quotients
  for the proper orthogonal decomposition}, SIAM J. Numer. Anal. 59(4) (2021), 2163--2196.


\bibitem{Dolgov_et_al} {\sc S. Dolgov, D. Kalise \& L. Saluzzi}. \textit{Data-driven tensor  train gradient cross
approximation for Hamilton-Jacobi-Bellman equations}. SIAM J Sci
Comput. 45 (2023), A2153-A2184.

\bibitem{dante_etal} {\sc S. Dolgov, D. Kalise \& K. Kunisch}. \textit{Tensor decomposition methods for high-dimensional Hamilton-Jacobi-Bellman Equations}. SIAM J Sci
Comput. 43 (2021), A1625-A650.

\bibitem{eigel_et_al} {\sc M. Eigel, R. Schneider \& D. Sommer}, Dynamical low-rank approximations of solutions to the Hamilton-Jacobi-Bellman equation, Numer. Linear Algebra Appl., 30 (2023) Paper No. e2463, 20 pp.

\bibitem{falcone1} {\sc M. Falcone}, \textit{A numerical approach to the infinite horizon problem of deterministic control theory}, Appl. Math. Optim., 15 (1987), 1--13.

\bibitem{falcone11} {\sc M. Falcone}, \textit{Numerical solution of dynamic programming equations, in Optimal Control and Viscosity Solutions of Hamilton-Jacobi-Bellman Equations}, Birkh\"auser Boston, Boston, MA, 1997, pp. 471-504


\bibitem{falcone_dd} {\sc M. Falcone, P. Lacunara \& Seghini,} \textit{A splitting algorithm for Hamilton-Jacobi-Bellman equations}, Appl. Numer. Math., 15 (1994), 207--218.

\bibitem{Javier_yo} {\sc J. de Frutos \& J. Novo}, 
\textit{Optimal bounds for numerical approximations of infinite horizon problems based on dynamic programming approach}, SIAM. J. Control Optim. 61 (2023) 415--433.

\bibitem{Bosco_Volker_yo} {\sc B. Garc\'{\i}a-Archilla, V. John \& J. Novo}, \textit{POD-ROMs for incompressible flows including snapshots of the temporal derivative of the 
full order solution}, SIAM J. Numer. Anal. 61 (2023) 1340--1368.

\bibitem{Bosco_Volker_yo2} {\sc B. Garc\'{\i}a-Archilla, V. John \& J. Novo}, \textit{POD-ROM methods: from a finite set of snapshots to continuous-in-time approximations},  	arXiv:2403.06967 [math.NA].

\bibitem{Bosco_Julia_pointwise} {\sc B. Garc\'{\i}a-Archilla \& J. Novo}, \textit{Pointwise error bounds in POD methods without difference quotients},
arXiv:2407.17159 [math.NA].

\bibitem{Iliescu-Wang} {\sc T. Iliescu \& z. Wang}, \textit{Are the snapshot difference quotients needed in the proper orthogonal decomposition?} SIAM J. Sci.\ Comput.\ 36 (2014), A1221?A1250.

\bibitem{Ku-Vol} {\sc K. Kunisch \& S. Volkwein}, \textit{Galerkin proper orthogonal decomposition methods for parabolic problems}, Numer. Math., 90 (2001), 117-148.

\bibitem {Luo_et_al} {\sc B. Luo, W. Huai-Ning, T. Huang \& D. Liu} \textit{ Data-based approximate policy iteration for affine nonlinear continuous-time optimal control design}, Automatica, 50 (2014), 3281--3290.

\bibitem {maceneay1}{\sc W. M. McEneaney}, \textit{A curse of dimensionality free numerical method for solution of certain HJB PDEs.}, SIAM J. Control Optim. 46 (2007), 1239--1276.

\bibitem {maceneay2}{\sc W. M. McEneaney}, \textit{Convergence rate for a curse-of-dimensionality-free method for Hamilton-Jacobi-Bellman PDEs represented as maxima of quadratic forms}, SIAM J. Control Optim. 48 (2009), 2651--2685.


\bibitem {tonon} {\sc D. Tonon, M.S. Aronna \& D. Kalise}, Optimal control: novel directions and applications. Vol 1. Cham: Springer; 2017.

\bibitem{NDF}
{\sc L. F. Shampine \& M. W. Reichelt}, \textit{The {\sc Matlab} ODE Suite},
{SIAM J. Sci.\ Comput.\/}, {18} (1997), pp.~1--22.

  \end{thebibliography}
  \end{document}